\newcommand{\s}{s}
\newcommand{\R}{\mathbb{R}}
\newcommand{\N}{\mathbb{N}}
\newcommand{\cuad}{{\sqcap\kern-.68em\sqcup}}
\newtheorem{definition}{Definition}[section]
\newtheorem{theorem}[definition]{Theorem}
\newtheorem{proposition}[definition]{Proposition}
\newtheorem{lemma}[definition]{Lemma}
\newtheorem{corollary}[definition]{Corollary}
\newtheorem{remark}[definition]{Remark}
\newcommand{\bremark}{\begin{remark} \em}
\newcommand{\eremark}{\end{remark} }
\newcommand{\Lra}{\Longrightarrow}
\newcommand{\loglap}{L_{\text{\tiny $\Delta \,$}}}
\renewcommand{\div}{\,{\rm div}\,}
\renewcommand{\phi}{\varphi}
\newcommand{\cA}{{\mathcal A}}
\newcommand{\cAG}{{\mathcal A \mathcal G}}
\newcommand{\cE}{{\mathcal E}}
\newcommand{\cL}{{\mathcal L}}
\newcommand{\supp}{{\rm supp}}
\newcommand{\LNdo}{\,\cL^N \!\! \otimes \! \delta_{0}}
\newcommand{\LNdoo}{\,\cL^N \!\! \otimes \! \delta_{(0,0)}}
\newcommand{\eps}{\varepsilon}
\title{Extension problems for the logarithmic Laplacian}
\begin{document}
\begin{center}
{\large\bf An extension problem for the logarithmic Laplacian} 

 \bigskip

 {\small  Huyuan Chen\footnote{chenhuyuan@yeah.net} \qquad Daniel Hauer\footnote{Daniel Hauer <daniel.hauer@sydney.edu.au}\qquad    Tobias Weth\footnote{weth@math.uni-frankurt.de}
}
\bigskip

{\small  $ ^1$Department of Mathematics, Jiangxi Normal University, Nanchang,\\
Jiangxi 330022, PR China\\[3mm]

$ ^2$The University of Sydney, School of Mathematics and Statistics, NSW 2006, Australia\\[3mm]

$ ^3$Goethe-Universit\"{a}t Frankfurt, Institut f\"{u}r Mathematik, Robert-Mayer-Str. 10,\\
D-60629 Frankfurt, Germany
 } \\[6mm]

\end{center}

\begin{abstract}
  The logarithmic Laplacian on the (whole) $N$-dimensional Euclidean
  space is defined as the first variation of the fractional
  Laplacian of order $2s$ at $s=0$ or, alternatively, as a singular Fourier integral
  operator with logarithmic symbol. While this operator has attracted fastly growing attention in recent years due to
  its relevance in the study of order-dependent problems, a characterization   
  via a local extension problem
  on the $(N+1)$-dimensional upper half-space in the spirit of the Cafferelli-Sivestre extension for the fractional Laplacian has been missing so far. 
  In this paper, we establish such a characterization. More precisely, we show
  that, up to a multiplicative constant, the logarithmic Laplacian
  coincides with the boundary-value operator associated with a weighted
  second-order operator on the upper half-space, which maps
  inhomogeneous Neumann data to a Robin boundary-value of the
  corresponding distributional solution with a singular excess term. This
  extension property of the logarithmic Laplacian leads to a new energy
  functional associated with this operator. By doubling the
  extension-variable, we show that distributional solutions of the
  extension problem are actually harmonic in the $(N+2)$-dimensional
  Euclidean space away from the boundary. As an application of these
  results, we establish a weak unique continuation principle for the
  (stationary) logarithmic Laplace equation.
\end{abstract}

\setcounter{equation}{0}
\section{Introduction}

In recent years, the interest in the modeling of nonlocal phenomena via hypersingular
integral operators and their study from a PDE perspective has increased significantly. For an overview of these developments, see
e.g. the recent books \cite{bucur-valdinoci},\cite{kuusi-palatucci} and
the references therein. The most prominent role in this context is
played by the fractional Laplacian $(-\Delta)^\s$ of order $2s$, which
for $\s\in(0,1)$ is given by
 \begin{equation}\label{fl 1}
 (-\Delta)^\s  u(x)=c_{N,\s} \lim_{\epsilon\to0^+} \int_{\R^N\setminus
   B_\epsilon(x) }\frac{u(x)-u(y)}{|x-y|^{N+2\s}}  dy  \qquad \text{for $u \in C^\infty_c(\R^N)$.}
\end{equation}
Here
$c_{N,\s}=2^{2\s}\pi^{-\frac{N}{2}}\s\frac{\Gamma(\frac{N+2\s}2)}{\Gamma(1-\s)}$
is a normalization constant which is chosen such that $(-\Delta)^s$
corresponds to the Fourier symbol $\xi \mapsto |\xi|^{2s}$, i.e. we have
\begin{equation}
 \label{eq:Fourier-delta-1} 
 \mathcal{F}((-\Delta)^\s u)(\xi) = |\xi|^{2\s}\hat u (\xi)\qquad  
\text{for $u \in C^2_c(\R^N)$ and $\xi \in \R^N$.}
\end{equation}
Using this property, it is easy to see that 
 \begin{equation} \label{eq:limit-behaviour}
\lim_{\s\to0^+}  (-\Delta)^\s  u(x) = u(x)
\quad\  {\rm and}\quad\ \quad\  \lim_{\s\to1^-}(-\Delta)^\s u(x)=-\Delta
u(x)
 \end{equation}
 for $u\in C^2_c(\R^N)$. Motivated by this observation, the first and
 third author of this paper introduced in \cite{CW18} the
 \emph{logarithmic Laplacian} $\loglap$ as a first order correction in
 the first limit in (\ref{eq:limit-behaviour}). More precisely, if
 $u\in C_c^{\alpha}(\R^N)$ for some $\alpha>0$, the function $\loglap u$
 is uniquely given, for $x \in \R^N$, by the expansion
  \begin{equation} \label{eq:limit-behaviour1} (-\Delta)^\s u(x) = u(x)
    + s \loglap u (x) + o(\s) \quad{\rm as}\ \, \s\to0^+,
 \end{equation}
 or equivalently, by
 \begin{displaymath}
\frac{d}{ds}(-\Delta)^su\Big|_{s=0}=\loglap u.
\end{displaymath}
So the logarithmic Laplacian $\loglap$ can be seen as the derivative of
operator valued map $s \mapsto (-\Delta)^s$ at $s=0$. Moreover, as noted in
\cite[Theorem 1.1]{CW18}, \eqref{eq:Fourier-delta-1} leads to
\begin{displaymath}
  \mathcal{F}(\loglap u)(\xi) = 2\ln|\xi|\hat u (\xi)\qquad 
\qquad  \text{for $\xi \in \R^N$ if $u \in C^\alpha_c(\R^N)$ for some $\alpha>0$.}
\end{displaymath}
Hence $\loglap u$ is a (weakly) singular Fourier integral operator
associated with the symbol $2\ln|\cdot|$. In addition, it is shown in
\cite[Theorem 1.1]{CW18} that $\loglap u$ admits the integral
representation
\begin{equation}\label{eq:log-laplace}
\loglap u(x)=c_N P.V.\int_{B_1(0)}\frac{u(x)-u(x+y)}{|y|^{N}}\
dy - c_N\int_{\R^N\setminus B_1(0)}\frac{u(x+y)}{|y|^{N}}\ dy+\rho_N\,u(x),  
\end{equation}
where 
\begin{equation}
  \label{eq:def-c-n-rho-n}
c_N=\frac{\Gamma(\frac{N}{2})}{\pi^{N/2}}= \frac{2}{\omega_{_N}}, 
\qquad\qquad \rho_N :=2\ln(2)+\psi(\frac{N}{2}) - \gamma,   
\end{equation}  
$\gamma=-\Gamma'(1)$ is the Euler Mascheroni constant, and
$\psi=\Gamma'/\Gamma$ is the Digamma function.  Here and in the
following, $\omega_{_N}:=|\mathbb{S}^{N-1}|$ denotes the
$(N-1)$-dimensional volume of the unit sphere in $\R^N$. In fact, for
the value of $\loglap u(x)$ to be well defined
by~\eqref{eq:log-laplace}, it suffices to assume that $\phi$ is Dini
continuous at $x\in \R^{N}$ and belongs to the space
\begin{equation}
\label{eq:def-L-1-0-space}  
  L^1_0(\R^N):= \Big\{ u \in L^1_{loc}(\R^N)\::\: 
  \int_{\R^N}(1+|x|)^{-N}|u(x)|\,dx < \infty\Big\},
\end{equation}
see \cite[Proposition 1.3]{CW18}. 

The logarithmic Laplacian $\loglap$ is a very useful operator to study
the asymptotics of the Dirichlet (eigenvalue) problem for the fractional
Laplacian $(-\Delta)^s$ for $s \to 0^+$ on open bounded Lipschitz sets
$\Omega \subset \R^N$.  In the case of Dirichlet eigenvalues and
eigenfunctions of $(-\Delta)^s$, these asymptotics have been studied in
detail in \cite{CW18} and \cite{FJW}. It has been shown in \cite{CW18}
that the first Dirichlet eigenvalue $\lambda_1^s$ of $(-\Delta)^s$ on
$\Omega$ satisfies the expansion
$\lambda_1^s = 1+ s \lambda_{1}^{\loglap}+o(s)$ as $s \to 0$, where
$\lambda_{1}^{\loglap}$ denotes the first Dirichlet eigenvalue of
$\loglap$. Moreover, the unique $L^2$-normalized positive eigenfunction
corresponding to $\lambda_1^s$ converges, as $s \to 0^+$, in
$L^2(\Omega)$ to the unique $L^2$-normalized positive Dirichlet
eigenfunction of $\loglap$. These convergence results have been refined
in \cite{FJW} and extended to higher eigenvalues and
eigenfunctions. Moreover, eigenvalue estimates and Weyl type asymptotics
for the logarithmic Laplacian have been derived in
\cite{CV,LW}.

More recently, the small order limit $s \to 0^+$ and its relationship to
$\loglap$ has also been studied in the framework of $s$-dependent {\em
  nonlinear} Dirichlet problems, see \cite{angeles-saldana,hs.saldana}.
The asymptotic study of fractional problems in this small order limit is
motivated, in particular, by order-dependent optimization problems with
small optimal order, see e.g. \cite{antil.bartels} and
\cite{sprekels.valdinoci,pellacci.verzini} for applications in image
processing and population dynamics.

It worth mentioning that the operator $\loglap$ is not only useful for
small order problems. In fact, it is shown in \cite{jarohs-saldana-weth}
that the operator $\loglap$ allows to characterize the $s$-dependence of
solution to fractional Poisson problems for the full range of exponents
$s \in (0,1)$. The logarithmic Laplacian also appears in the geometric
context of the $0$-fractional perimeter, see \cite{DNP}.  Finally, we
mention that a higher order operator expansion in the spirit of
(\ref{eq:limit-behaviour1}) is derived and investigated in \cite{C}.

Due to its relevance, it is natural to ask whether the logarithmic
Laplacian admits a representation by a local extension problem. This has
been an open question for the last years, and the purpose of the present
paper is to give an affirmative answer to this question.  The interest
in local extension problems is very natural as they pave the way for the
application of classical PDE techniques to associated nonlocal
problems. In this context, the starting point of our paper is the
so-called \emph{Caffarelli-Silvestre extension property} of the
fractional Laplacian $(-\Delta)^s$ given by the characterization
\begin{displaymath}
\label{c-s-extension-d-s-constant}  
  (-\Delta)^s=d_s\,\Lambda_s \qquad \qquad \text{with}\quad   
  d_s = 2^{2s-1} \frac{\Gamma(s)}{\Gamma(1-s)},
\end{displaymath}
 where $\Lambda_s$ denotes the \emph{Dirichlet-to-Neumann operator} 
associated with the weighted second-order differential operator
\begin{displaymath}
  \mathcal{A}_s=-\div (t^{1-2s} \nabla \cdot)
\end{displaymath}
on the half space $\R^{N+1}_{+}:=\R^N\times (0,+\infty)$. More
precisely, Caffarelli and Silvestre
\cite{Caffarelli-Silvestre} showed  that for every $u\in
C_c^{\infty}(\R^N)$, there is an \emph{$s$-harmonic extension} 
$w_s:\R^{N+1}_+ \to \R$ of $u(x)=w_s(x,0)$, $x\in \R^N$; that is, $w_s$ is a
distributional solution of the \emph{Dirichlet problem}
 \begin{equation}\label{Ext-s}
\begin{cases}
\begin{aligned}
 -\div (t^{1-2s} \nabla w_s) =  0 \quad\ &\text{in \ \  $\R^{N+1}_+$,}\\[2mm]
 w_s = u \quad\ &\text{on \ \ $\R^N= \partial \R^{N+1}_+$},  
\end{aligned}
\end{cases}
\end{equation}
where $s\in(0,1)$, and for the \emph{co-normal derivative}
\begin{displaymath}
  \Lambda_su:=-\lim_{t \to 0^+} t^{1-2s}\partial_t w_s
\end{displaymath}
one has that
\begin{equation}
  \label{eq:caffarelli-silvestre-boundary-representiation}
  (-\Delta)^s u = - d_s \lim_{t \to 0^+} t^{1-2s}\partial_t w_s
\end{equation}
with some constant $d_s>0$. The value \eqref{c-s-extension-d-s-constant}
of the constant $d_s$ was then given by Cabr\'e and Sire in
\cite{CS}). This property of $(-\Delta)^s$ provides a link between
nonlocal problems involving $(-\Delta)^su$ with a local one by extending
$u$ on $\R^N$ $s$-harmonically on $\R^{N+1}_{+}$. In particular, it
provides the PDE point of view that a symmetric $2s$-stable L\'evy
process (generated by the fractional Laplacian $(-\Delta)^s$, see
\cite{appelbaum}) coincides with the trace process of a degenerate
diffusion process in $\R^{N+1}_+$ (see~\cite{MalOst}, and for further
details to the literature see~\cite[Section~2]{HauerLee}).\medskip

As mentioned before, the purpose of the present paper is to derive a
representation of the operator logarithmic Laplacian $\loglap$ via a
local extension problem.  In what follows, we shall see that $\loglap$
admits such a representation, but the corresponding extension problem
and boundary operator are very different from the Caffarelli-Silvestre
extension problem for the fractional Laplacian. To state our main
results, we need the following definition.

 \begin{definition}
\label{algebraic-growth}   
For $d \in \N$ and a subset $\Omega \subset \R^d$, we let $\cAG(\Omega)$
denote the space of functions $w \in L^1_{loc}(\Omega)$ having at most
algebraic growth in the integral sense, i.e., the space of functions
$w \in L^1_{loc}(\Omega)$ with the property that, for some constants
$C,\sigma>0$, we have
\begin{displaymath}
 \|w\|_{L^1(B_1(x)\cap \Omega)} \le C (1+|x|)^\sigma  \qquad \text{for all $x \in \Omega$.}
\end{displaymath}
\end{definition}

Further, we recall that, by definition, a function $u: \R^N \to \R$
is Dini continuous at a point $x \in \R^N$ if
$$
\int_0^1 \frac{\omega_{u,x}(r)}{r}\,dr < \infty\qquad \text{with}\qquad  
\omega_{u,x}(r)= \sup_{y \in B_r(x)} |u(y)-u(x)|.
$$
Moreover, we call $u$ {\em uniformly Dini continuous on a subset $K \subset \R^N$} if 
\begin{equation*}
\int_0^1 \frac{\omega_{u,\text{\tiny $K$}}(r)}{r}\,dr < \infty \qquad 
\text{for $\omega_{u,\text{\tiny $K$}}(r):=
\sup \limits_{x \in K}\omega_{u,x}(r)$.}
\end{equation*}

Now, we can state the first variant of the extension property associated
with the logarithmic Laplacian $\loglap$.

 \begin{theorem}
   \label{thm:main1}
   For every $u \in L^1_0(\R^N)$, there is a unique solution
   \begin{displaymath}
     w_u \in \cAG(\R^{N+1}_+) \cap
     C^\infty(\R^{N+1}_+)
   \end{displaymath}
   of the problem
  \begin{equation}
    	\label{eq:Poisson-problem-thm:main1}
		-\div(t\,\nabla
                w_u) = 0  \quad \text{in $\R^{N+1}_+$}
    \end{equation}    
   satisfying the asymptotic boundary conditions 
   \begin{equation}
     \label{eq:thm:main1-asymptotic-1}
     -\lim_{t \to 0^+} t \partial_t w_u(\cdot,t) = u \qquad \text{in $L^1_{loc}(\R^N)$}
   \end{equation}
   and 
    \begin{displaymath}
          \label{eq:thm:main1-asymptotic-2}
        \lim_{t\to + \infty}w_u(x,t)=0 \qquad\text{for every $x \in \R^N$.}
      \end{displaymath}
Moreover, 
    the following statements hold:
    \begin{enumerate}[label=(\arabic*.)]
    \item[(i)] \label{thm:main1-claim1} The function $w_u$ can be represented via the Poisson formula
      \begin{equation}
\label{eq:poisson-formula-wu}
        w_u(x,t)=\frac{c_N}{2}\int_{\R^N}\frac{u(\tilde x)}{(|x-\tilde x|^2+t^2)^{N/2}}\,d\tilde x \qquad \text{for every $(x,t)\in \R^{N+1}_+$;}
      \end{equation}
    \item[(ii)] 
      \begin{equation}
        \label{u-limit-L-1-loc}
        \lim_{t\to 0^{+}}\frac{w_u(\cdot,t)}{\ln
          t}=- u \qquad \text{in $L^1_{loc}(\R^N)$;}
      \end{equation}
    \item [(iii)] 
      \begin{equation}
        \label{eq:characterization-loglap}
        \loglap u = 2(\ln 2-\gamma)u - 2\lim_{t \to 0^+}\Bigl(w_u(\cdot,t) + u \log
        t \Bigr) \quad \text{in the distributional sense in $\R^N$;}
      \end{equation}
    \item[(iv)] If $u$ is Dini continuous at a point $x \in \R^N$, then
      (\ref{eq:characterization-loglap}) holds in pointwise sense, i.e.
      we have
      \begin{equation}
        \label{eq:characterization-loglap-pointwise}
              \loglap u(x) = 2\,(\ln 2 - \gamma)\,u(x)
                             -  2\lim_{t \to 0^+}\Bigl(w_u(x,t) + u(x) \ln t\Bigr) 
      \end{equation}
      as a pointwise limit in $\R$.
    \end{enumerate}
\end{theorem}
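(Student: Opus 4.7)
The plan is to construct $w_u$ explicitly via the Poisson-type integral in (i), verify each listed property by direct calculation, and establish uniqueness separately via a ``doubling'' of the extension variable. The key algebraic observation underlying the construction is that, writing $V(x,y):=w(x,|y|)$ for $(x,y)\in\R^N\times\R^2$, the weighted equation $-\div(t\nabla w)=0$ on $\R^{N+1}_+$ is equivalent to $\Delta V=0$ on $\R^N\times(\R^2\setminus\{0\})$. In particular, the kernel $K(x,t)=(|x|^2+t^2)^{-N/2}$ lifts to the Newtonian potential $|z|^{-N}$ of a point charge in $\R^{N+2}$ and is therefore automatically a solution of (\ref{eq:Poisson-problem-thm:main1}) on $\R^{N+1}_+$.

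I define $w_u$ by (\ref{eq:poisson-formula-wu}) and first check that the integral converges for every $(x,t)\in\R^{N+1}_+$, using the bound $(|x-\tilde x|^2+t^2)^{-N/2}\leq C_{x,t}(1+|\tilde x|)^{-N}$ together with $u\in L^1_0(\R^N)$; the same estimate gives $w_u\in\cAG(\R^{N+1}_+)$. Standard differentiation under the integral sign yields $w_u\in C^\infty(\R^{N+1}_+)$ and the equation (\ref{eq:Poisson-problem-thm:main1}), while splitting the integral into $|\tilde x-x|<R$ and $|\tilde x-x|>R$ gives $w_u(x,t)\to 0$ as $t\to+\infty$. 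For the first asymptotic boundary condition, differentiating under the integral produces
\[
-t\,\partial_t w_u(x,t)=\frac{Nc_N}{2}\int_{\R^N}\frac{t^2}{(|x-\tilde x|^2+t^2)^{(N+2)/2}}\,u(\tilde x)\,d\tilde x=(k_t\ast u)(x),
\]
and a polar-coordinate computation combined with $c_N\omega_N=2$ shows $\int_{\R^N}k_t\,dy=1$ for every $t>0$; since $k_t$ is a symmetric approximate identity, this yields (\ref{eq:thm:main1-asymptotic-1}).

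Parts (ii)-(iv) are obtained from the decomposition
\[
w_u(x,t)=\frac{c_N}{2}\int_{B_1}\frac{u(x+y)-u(x)}{(|y|^2+t^2)^{N/2}}\,dy+\frac{c_N u(x)}{2}\int_{B_1}\frac{dy}{(|y|^2+t^2)^{N/2}}+\frac{c_N}{2}\int_{\R^N\setminus B_1}\frac{u(x+y)}{(|y|^2+t^2)^{N/2}}\,dy,
\]
together with the radial asymptotics
\[
\int_{B_1}\frac{dy}{(|y|^2+t^2)^{N/2}}=\omega_N\int_0^{1/t}\frac{\tau^{N-1}}{(\tau^2+1)^{N/2}}\,d\tau=-\omega_N\ln t+\kappa_N+o(1)\quad\text{as }t\to 0^+,
\]
for an explicit constant $\kappa_N$ expressible in terms of $\psi(N/2)$ and $\gamma$. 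Since $c_N\omega_N/2=1$, the singular contribution is precisely $-u(x)\ln t$, which yields (\ref{u-limit-L-1-loc}). The first and third integrals stay uniformly bounded in $L^1_{loc}$ for $t\in(0,1]$, by Fubini and the local integrability of $|y|^{-N}$ away from the origin, giving the $L^1_{loc}$-convergence needed for the distributional identity (\ref{eq:characterization-loglap}). Under the Dini hypothesis on $u$ at $x$, dominated convergence with dominator $\omega_{u,x}(|y|)/|y|^N$ (locally integrable on $B_1$) turns these into pointwise limits, and comparison with the integral representation (\ref{eq:log-laplace})-(\ref{eq:def-c-n-rho-n}) of $\loglap u(x)$, after tracking the constants involving $\kappa_N$ and $\rho_N$, produces exactly the prefactor $2(\ln 2-\gamma)$ in (\ref{eq:characterization-loglap-pointwise}).

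For uniqueness, the difference $v$ of two candidate solutions satisfies the same weighted PDE with $-t\partial_t v\to 0$ in $L^1_{loc}$, vanishes as $t\to+\infty$, and has algebraic growth. Setting $V(x,y):=v(x,|y|)$ on $\R^N\times(\R^2\setminus\{0\})$ gives a distributional harmonic function there, lying in $L^1_{loc}(\R^{N+2})$ even if $v$ has a logarithmic singularity at $t=0$. A cutoff-and-integration-by-parts argument, rewriting $\int V\Delta\phi\,dz$ in cylindrical coordinates and using the weighted equation for $v$, reduces the pairing to boundary terms on $\{|y|=\varepsilon\}$ of the form $\int_{\R^N}(r\partial_r v)\,\phi\big|_{r=\varepsilon}\,dx$; sending $\varepsilon\to 0^+$ and using the $L^1_{loc}$ vanishing of $r\partial_r v$ shows that $V$ is harmonic on all of $\R^{N+2}$. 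Algebraic growth then forces $V$ to be a polynomial, and the decay in $|y|$ at fixed $x$ forces that polynomial to vanish. I expect the main obstacle to lie precisely in this last passage: the singular set $\R^N\times\{0\}$ is of codimension $2$ in $\R^{N+2}$, which is the borderline case for removable singularities of harmonic functions, so the weak $L^1_{loc}$ form of the Neumann condition must be exploited through a careful integration by parts against radial cutoffs, rather than through any pointwise or uniform bound on $t\partial_t v$.
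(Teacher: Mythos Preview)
Your overall architecture---Poisson formula for existence, doubling $t\mapsto y\in\R^2$ for uniqueness via Liouville---is exactly the paper's route, and your treatment of the PDE, the decay at infinity, the approximate-identity argument for the Neumann datum, and the pointwise part (iv) under a Dini hypothesis all match the paper's arguments in Section~3 and Section~4.1 closely.

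There is, however, a genuine gap in your handling of part (iii). You assert that the first integral
\[
I_1(x,t)=\frac{c_N}{2}\int_{B_1}\frac{u(x+y)-u(x)}{(|y|^2+t^2)^{N/2}}\,dy
\]
stays uniformly bounded in $L^1_{loc}$ for $t\in(0,1]$. This is not true for general $u\in L^1_0(\R^N)$: the Fubini bound gives only
\[
\|I_1(\cdot,t)\|_{L^1(B_R)}\le C\int_{B_1}\frac{\|u(\cdot+y)-u\|_{L^1(B_R)}}{(|y|^2+t^2)^{N/2}}\,dy,
\]
and without a modulus-of-continuity rate on the $L^1$ translation (an $L^1$-Dini condition, which $L^1_0$ does not provide) this can blow up like $o(|\ln t|)$ rather than $O(1)$. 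That is still enough for (ii), but it does \emph{not} give the $L^1_{loc}$-convergence you need for the distributional identity (iii). The paper avoids this difficulty by a duality trick: one observes via Fubini that
\[
\int_{\R^N} w_u(x,t)\,\phi(x)\,dx=\int_{\R^N} w_\phi(x,t)\,u(x)\,dx,
\]
then applies the \emph{pointwise} result (iv) to the test function $\phi\in C_c^\infty$, which is certainly Dini continuous. One still needs a uniform-in-$x$ bound of the form $|w_\phi(x,t)+\phi(x)\ln t|\le C(1+|x|)^{-N}$ to dominate against the $L^1_0$ weight of $u$; the paper proves this separately (Remark after the pointwise proof and Proposition~\ref{main-distr-sense}). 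So (iv) is logically prior to (iii), not a special case of it.

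Two smaller remarks. For (ii) the paper does not use your decomposition at all; it derives $w_u/\ln t\to -u$ in $L^1_{loc}$ directly from the Neumann condition by applying l'H\^opital to $t\mapsto \int_{B_R}\sqrt{1+(w_u+u\ln t)^2}\,dx$, which is slicker and dodges the $I_1$ issue entirely. And the identification of your constant $\kappa_N$ with $\rho_N-2(\ln 2-\gamma)$ is not a one-line computation: the paper splits the radial integral over $(0,1/t)$ into $(0,1)$ and $(1,1/t)$, obtaining two constants $\tilde q_N$ and $q_N$, and then devotes an appendix to showing $q_N+\tilde q_N+\rho_N=2(\ln 2-\gamma)$ via explicit recursions in $N$ matched against the digamma values $\psi(m)$ and $\psi(m+\tfrac12)$.
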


The following comments to Theorem~\ref{thm:main1} are worth stating.

\begin{remark}
\label{remark-after-first-main-theorem}  
(i) Due to the characterization given by
\eqref{eq:characterization-loglap} 
(respectively, \ref{eq:characterization-loglap-pointwise}), the logarithmic Laplacian
$\loglap$ can, after subtracting a multiple of the identity, be
characterized as \emph{Neumann-to-Dirichlet operator} associated with
the weighted second-order differential operator
\begin{displaymath}
  \mathcal{A}_0=-\div (t \nabla \cdot)
\end{displaymath}
on the half space $\R^{N+1}_{+}$, and with the singular excess term
$u(x) \ln t $ as $t\to 0^+$. 

Alternatively, since for every $u\in L^1_0(\R^N)$, the extension $w_u$
attains $u$ as limit of the co-normal derivative
\eqref{u-limit-L-1-loc}, the characterization given by
\eqref{eq:characterization-loglap} (respectively,
\ref{eq:characterization-loglap-pointwise}) says that the logarithmic
Laplacian $\loglap$ coincides up to a constant multiple with the
\emph{Neumann-to-Robin operator} associated with $\mathcal{A}_0$ on
$\R^{N+1}_{+}$ and with the singular excess term $u(x) \ln t $ as
$t\to 0^+$.

(ii) If $u \in L^1_0(\R^N)$ and $w_u$ is defined by
(\ref{eq:poisson-formula-wu}), it is tempting to guess that the
convergence (\ref{eq:thm:main1-asymptotic-1}) and its counterpart
$\frac{w_u(\cdot,t)}{\ln t} \to -u$ as $t \to 0^+$ hold also in the
weighted $L^1$-space $L^1_0(\R^N)$. However, in general the functions
$w_u(\cdot,t)$ do not need to belong to the space $L^1_0(\R^N)$ in this
case, see Remark~\ref{counterexample} for a counterexample.

(iii) By definition, the distributional limit in
(\ref{eq:characterization-loglap}) means that
\begin{equation*}
  \int_{\R^N}u \loglap \phi dx =  2(\ln 2-\gamma) \int_{\R^N}u \phi\,dx - 
  2\lim_{t \to 0^+}  \int_{\R^N}\Bigl(w_u(x,t) + u \ln t\Bigr)\phi (x)
  \,dx\quad 
\:\text{for all $\phi \in C^\infty_{c}(\R^N)$.} 
\end{equation*}
In fact, we shall show that this property already holds if
$\phi \in C_c^D(\R^N)$, where $C_c^D(\R^N)$ denotes the space of
uniformly Dini continous functions on $\R^N$ with compact support. A
direct consequence of this property is the following alternative
representation of the energy associated with $\loglap$, which has been introduced in \cite{CW18} in the form 
$$
\phi  \mapsto \cE_L(\phi,\phi)=\frac{c_N}{2} \int_{|x-\tilde x|<1} \frac{(\phi(x)-\phi(\tilde x))^2}{ |x-\tilde x|^N} dx d\tilde x  -\frac{c_N}{2} \int_{|x-\tilde x|\geq 1} \frac{ \phi(x)\phi(\tilde x) }{ |x-\tilde x|^N}  \,dxd\tilde x+\frac{\rho_N}{2} \int_{\R^N}   \phi(x)^2  \,dx.
$$
\end{remark}

\begin{corollary}
  \label{sec:energy-of-loglap}
  For every $\phi \in C^D_c(\R^N)$, one has that
  \begin{displaymath}
  \cE_{L}(\phi,\phi) = \int_{\R^N}\phi \loglap \phi dx =   2(\ln 2-\gamma)\|\phi\|_{L^2(\R^N)}^2 -
  2\lim_{t \to 0^+} 
 \int_{\R^N}\Bigl(\phi(x) w_\phi(x,t) + \phi^2(x) \ln t \Bigr) \,dx.
\end{displaymath}
\end{corollary}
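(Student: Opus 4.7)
The corollary is an almost immediate consequence of Theorem~\ref{thm:main1}(iii), as refined in Remark~\ref{remark-after-first-main-theorem}(iii), combined with the definition of the energy form $\cE_L$ recalled from \cite{CW18}. My plan is to prove the two equalities separately.

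For the first equality $\cE_L(\phi,\phi) = \int_{\R^N} \phi\,\loglap \phi\, dx$, I would simply revisit the derivation of $\cE_L$ given in \cite{CW18}. Starting from the pointwise representation \eqref{eq:log-laplace} of $\loglap \phi$, which is valid since $\phi \in C_c^D(\R^N)$ is Dini continuous with compact support, one multiplies by $\phi(x)$, integrates over $x$, and applies Fubini together with the standard symmetrization trick on the singular kernel $|x-\tilde x|^{-N}$. The near-diagonal principal-value integral turns into the quadratic form $\frac{c_N}{2}\int_{|x-\tilde x|<1}(\phi(x)-\phi(\tilde x))^2|x-\tilde x|^{-N}\,dx\,d\tilde x$, the far part contributes $-\frac{c_N}{2}\int_{|x-\tilde x|\ge 1}\phi(x)\phi(\tilde x)|x-\tilde x|^{-N}\,dx\,d\tilde x$, and the local term $\rho_N \phi(x)$ produces $\frac{\rho_N}{2}\|\phi\|_{L^2(\R^N)}^2$ after combining with its symmetric counterpart. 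Integrability of each piece is guaranteed by the compact support and Dini continuity of $\phi$, so no analytic delicacies arise.

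For the second equality, I would invoke the distributional identity stated in Remark~\ref{remark-after-first-main-theorem}(iii), which extends Theorem~\ref{thm:main1}(iii) to test functions in $C_c^D(\R^N)$. Since $\phi\in C_c^D(\R^N)\subset L^1_0(\R^N)$, one may apply this identity with $u=\phi$ and with test function $\phi$ itself. This directly yields
\begin{equation*}
\int_{\R^N}\phi\,\loglap \phi\, dx = 2(\ln 2-\gamma)\|\phi\|_{L^2(\R^N)}^2 - 2\lim_{t\to 0^+}\int_{\R^N}\bigl(\phi(x)\,w_\phi(x,t)+\phi^2(x)\ln t\bigr)\,dx,
\end{equation*}
and combining with the first equality produces the claim.

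The only point that requires a brief sanity check is that the limit on the right-hand side exists as a finite real number, rather than merely in a distributional sense. This follows automatically from the finiteness of $\int_{\R^N}\phi\,\loglap \phi\, dx$, which holds because $\loglap \phi\in L^1(\R^N)\cap L^\infty(\R^N)$ for $\phi \in C_c^D(\R^N)$ (as one reads off directly from \eqref{eq:log-laplace}), together with the finiteness of the $L^2$ term. I do not anticipate any serious obstacle here, since all the analytic work has already been carried out in the proof of Theorem~\ref{thm:main1} and its distributional refinement in Remark~\ref{remark-after-first-main-theorem}(iii); the corollary essentially amounts to the substitution $u=\phi$ in a previously established identity.
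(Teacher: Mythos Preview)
Your proposal is correct and follows essentially the same route as the paper: the paper does not give a separate proof of this corollary but simply declares it a ``direct consequence'' of the distributional identity in Remark~\ref{remark-after-first-main-theorem}(iii) (i.e., Proposition~\ref{main-distr-sense}), which is exactly your second step of substituting $u=\phi$. Your first step, recovering $\cE_L(\phi,\phi)=\int_{\R^N}\phi\,\loglap\phi\,dx$ from the integral representation~\eqref{eq:log-laplace} via symmetrization, is the standard computation from \cite{CW18} that the paper takes for granted.
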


Next we note that we may replace the nonhomogeneous boundary condition~(\ref{eq:thm:main1-asymptotic-1}) by a distributional source term by extending the function $w=w_u$ in Theorem \ref{thm:main1-claim1} to all of $\R^N$ by even reflection. More precisely, we have the following. 

\begin{theorem}
  \label{intro-even-reflection}
   For every $u \in L^1_0(\R^N)$, there is a unique
   distributional solution $w \in \cAG(\R^{N+1})$ of the Poisson problem
  \begin{equation}
    	\label{eq:Poisson-roblem-distributional-sense}
		-\div(|t|\,\nabla
                w_u) = 2\, u \LNdo \quad \text{in $\R^{N+1}$}
    \end{equation}    
    which is even in the $t$-variable and satisfies
    $$
    \lim_{|t|\to \infty}w_u(x,t)=0\qquad\text{for every $x \in \R^N$.}
    $$
    This solution is given by (\ref{eq:poisson-formula-wu}) for $(x,t) \in \R^{N+1}_+$, so $w\big|_{\R^{N+1}_+}=w_u$ with $w_u$ as in Theorem~\ref{thm:main1}.
    
    Here, $\LNdo$ denotes the product measure of the $N$-dimensional
Lebesgue-measure $\cL^N$ and the Dirac-measure $\delta_0$ on $\R$ at $t=0$.
\end{theorem}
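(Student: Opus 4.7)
The plan is to obtain existence by taking $w$ to be the even extension of the function $w_u$ furnished by Theorem~\ref{thm:main1}, and uniqueness by reducing to the uniqueness already established in that theorem. Throughout, the key bridge will be the asymptotic Neumann condition~\eqref{eq:thm:main1-asymptotic-1}.

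For \emph{existence}, I set $w(x,t):=w_u(x,|t|)$, where $w_u$ is given by the Poisson formula~\eqref{eq:poisson-formula-wu}. Since this formula is even in $t$ and $w_u\in\cAG(\R^{N+1}_+)$, the extension lies in $\cAG(\R^{N+1})$, and the decay as $|t|\to\infty$ is immediate from Theorem~\ref{thm:main1}. To verify the distributional equation, I fix $\phi\in C_c^\infty(\R^{N+1})$ and split
\begin{equation*}
\int_{\R^{N+1}} |t|\,\nabla w\cdot\nabla\phi\,dx\,dt = \int_{\R^{N+1}_+} t\,\nabla w\cdot\nabla\phi\,dx\,dt + \int_{\R^{N+1}_-} (-t)\,\nabla w\cdot\nabla\phi\,dx\,dt.
\end{equation*}
In the lower half-space I use the change of variables $t\mapsto -t$: evenness of $w$ turns this integral into $\int_{\R^{N+1}_+}t\,\nabla w\cdot\nabla\tilde\phi\,dx\,dt$ with $\tilde\phi(x,t)=\phi(x,-t)$, so by symmetry it suffices to handle the upper half-space and double. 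For the upper integral, I integrate by parts on $\R^N\times(\epsilon,\infty)$, use $\div(t\nabla w_u)=0$ in $\R^{N+1}_+$ from Theorem~\ref{thm:main1} to kill the interior term, and get the boundary contribution $-\int_{\R^N}\epsilon\,\partial_t w_u(x,\epsilon)\,\phi(x,\epsilon)\,dx$. Letting $\epsilon\to 0^+$ and applying the $L^1_{loc}$ convergence $-t\partial_t w_u(\cdot,t)\to u$ from~\eqref{eq:thm:main1-asymptotic-1}, combined with uniform convergence $\phi(\cdot,\epsilon)\to\phi(\cdot,0)$ on the (compact) $x$-support of $\phi$, yields $\int_{\R^N}u(x)\phi(x,0)\,dx$. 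Since $\tilde\phi(x,0)=\phi(x,0)$, the two halves add to $2\int u\phi\,dx$, which is exactly $\langle 2u\LNdo,\phi\rangle$.

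For \emph{uniqueness}, let $v:=w_1-w_2$ be the difference of two solutions. Then $v$ is even in $t$, lies in $\cAG(\R^{N+1})$, tends to $0$ as $|t|\to\infty$, and satisfies $-\div(|t|\nabla v)=0$ distributionally. Away from $\{t=0\}$ the operator $-\div(|t|\nabla\cdot)$ is uniformly elliptic with smooth coefficients, so interior elliptic regularity (Weyl's lemma) gives $v\in C^\infty(\R^{N+1}\setminus\{t=0\})$, and in particular $v|_{\R^{N+1}_+}\in\cAG(\R^{N+1}_+)\cap C^\infty(\R^{N+1}_+)$ with $\div(t\nabla v)=0$ there. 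Reversing the integration-by-parts computation from the existence step, for every $\phi\in C_c^\infty(\R^{N+1})$ I obtain
\begin{equation*}
0=\int_{\R^{N+1}}|t|\,\nabla v\cdot\nabla\phi\,dx\,dt = -2\lim_{\epsilon\to 0^+}\int_{\R^N}\epsilon\,\partial_t v(x,\epsilon)\,\phi(x,0)\,dx,
\end{equation*}
so $-\lim_{\epsilon\to0^+}\epsilon\,\partial_t v(\cdot,\epsilon)=0$ in the distributional sense on $\R^N$.

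The delicate point, and the main obstacle, is to upgrade this distributional vanishing of the Neumann trace to the $L^1_{loc}$-sense required to invoke the uniqueness part of Theorem~\ref{thm:main1}. I plan to handle this by first showing that $-t\partial_t v(\cdot,t)$ has an $L^1_{loc}$-limit as $t\to 0^+$: using the Poisson-kernel representation already established, any $v$ in the relevant class satisfying $\div(t\nabla v)=0$ in $\R^{N+1}_+$ with the decay at infinity can be written as $v=w_{u_v}$ where $u_v$ is its distributional Neumann trace, and then~\eqref{eq:thm:main1-asymptotic-1} applied to $w_{u_v}$ gives the desired $L^1_{loc}$-convergence. Since the distributional trace is $0$, so is $u_v$, and the uniqueness clause of Theorem~\ref{thm:main1} forces $v=0$ on $\R^{N+1}_+$; evenness then gives $v\equiv 0$.
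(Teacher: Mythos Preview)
Your existence argument is essentially the paper's: it also obtains \eqref{eq:Poisson-roblem-distributional-sense} from the Neumann problem by even reflection and integration by parts on $\{|t|>\epsilon\}$. One technical point the paper handles explicitly and you do not: the paper works directly with $\int_{\R^{N+1}} w\,\div(|t|\nabla\phi)\,dxdt$ (not with $\int |t|\,\nabla w\cdot\nabla\phi$), integrates by parts \emph{twice} in $\{|t|>\epsilon\}$, and then uses Lemma~\ref{equiv-form-poiss-lemm-prelim-3} to show $\epsilon\|w(\cdot,\epsilon)\|_{L^1(B_R)}\to 0$, which is needed for the boundary term $\epsilon\int w\,\partial_\nu\phi$ to vanish. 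Your single integration by parts starting from $\int |t|\,\nabla w\cdot\nabla\phi$ presupposes that this quantity equals $-\int w\,\div(|t|\nabla\phi)$, which is not automatic near $t=0$.

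The uniqueness argument has a genuine gap, and the paper proceeds quite differently. Your plan is to show that the difference $v$ has vanishing distributional Neumann trace, then ``upgrade'' this to the $L^1_{loc}$-sense by writing $v=w_{u_v}$. But the representation $v=w_{u_v}$ is exactly the content of the uniqueness clause of Theorem~\ref{thm:main1}, which \emph{requires} as a hypothesis that $-t\partial_t v(\cdot,t)\to u_v$ in $L^1_{loc}$ for some $u_v\in L^1_0(\R^N)$; nothing in your argument establishes that $v$ satisfies such a condition. Even the preliminary step---reversing the integration by parts to extract the distributional Neumann trace---is problematic: it produces a boundary term $\epsilon\int_{|t|=\epsilon} v\,\partial_\nu\phi$, and to make this vanish you would need $\epsilon\|v(\cdot,\epsilon)\|_{L^1(K)}\to 0$, which does not follow from $v\in\cAG(\R^{N+1})$ alone (in the existence direction the paper gets this from Lemma~\ref{equiv-form-poiss-lemm-prelim-3}, but that lemma already assumes the $L^1_{loc}$ Neumann condition).

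The paper avoids this circularity entirely by passing to the doubled problem in $\R^{N+2}$ (property (iv) of Theorem~\ref{main-theorem-equivalence}): the difference $W$ of two solutions is harmonic on all of $\R^{N+2}$ in the distributional sense, hence classically harmonic by Weyl's lemma; the $\cAG$ assumption together with the mean value property gives polynomial growth, so $W$ is a polynomial by Liouville; and the decay $W(x,y)\to 0$ as $|y|\to\infty$ forces $W\equiv 0$. This route needs no boundary-trace analysis at all.
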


If one rewrites the weighted operator
\begin{displaymath}
 \mathcal{A}_0=- t \Bigl(\Delta_x - \frac{1}{t}\partial_t(t\partial_t\cdot)\Bigr),
\end{displaymath}
then one sees that the part $\frac{1}{t}\partial_t(t\partial_t\cdot)$ is the radial
part of the Laplacian in $\R^2$. It is therefore convenient to \emph{double the
  variable $t$} of the distributional solution
  $w_u$ of Poisson
  problem~\eqref{eq:Poisson-roblem-distributional-sense}, that is, to 
  consider the mapping
\begin{displaymath}
  (x,y) \mapsto W_u(x,y):= w_y(x,|y|) \qquad (x,y) \in \R^{N+2} = \R^N \times
  \R^2.
\end{displaymath}
As we shall see in Section~\ref{sec:equiv-form-poiss-1} below, this simple transformation allows us to reformulate Theorem~\ref{thm:main1} as follows. In order to simplify the notation, we set $\R^{N+2}_*:= \R^N \times (\R^2 \setminus \{(0,0)\})$.

\begin{theorem}
   \label{thm:main2}
   For every $u \in L^1_0(\R^N)$, there is a unique
   distributional solution
   \begin{displaymath}
     W_u \in \cAG(\R^{N+2}) \cap
     C^\infty(\R^{N+2}_*)
   \end{displaymath}
   of the Poisson problem
  \begin{equation}
    \label{eq:distributional-2-extension-thm2}
    -\Delta {w_{u}} = 2 \pi u\, \LNdoo
    \quad \text{in $\R^{N+2}$,}  
  \end{equation}
   satisfying
   \begin{displaymath}
       \lim_{|y|\to \infty}W_u(x,y)=0\qquad\text{for every $x \in \R^N$.}
   \end{displaymath}
    Here, $\delta_{(0,0)}$ is the Dirac-measure on $\R^2$ at $y=(0,0)$.
    Moreover, the following statements hold:
    \begin{enumerate}
    \item[(i)] $W_u$ satisfies the Poisson formula
     \begin{equation}
       \label{eq:poisson-formula-loglap-2-dim}
       W_u(x,y)=\frac{c_N}{2}\int_{\R^N}\frac{u(\tilde x)}{(|x-\tilde
         x|^2+|y|^2)^{N/2}}\,d \tilde x\qquad \text{for every $(x,y)\in \R^{N+2}_*$}
     \end{equation}
     \item[(ii)]
      \begin{equation}
        \label{u-limit-L-1-loc-2-dim}
        \lim_{|y|\to 0}\frac{W_u(\cdot,y)}{\ln
          t}=- u \qquad \text{in $L^1_{loc}(\R^N)$}
      \end{equation}
     \item[(iii)]
     \begin{equation}
       \label{eq:characterization-loglap-2-dim}
       \loglap u = 2(\ln 2-\gamma)u - 2\lim_{|y| \to 0}\Bigl(W_u(\cdot,y) + u \log
       |y|\Bigr) \quad \text{in the distributional sense in $\R^N$.}
     \end{equation}
     \item[(iv)] If, moreover, $u$ is Dini continuous at a point $x \in \R^N$, then we have 
      \begin{equation}
        \label{eq:characterization-loglap-pointwise-R-2}
              \loglap u(x) = 2\,(\ln 2 - \gamma)\,u(x)
                             -  2\lim_{|y| \to 0}\Bigl(W_u(x,y) + u(x) \ln t\Bigr) 
      \end{equation}
      as a pointwise limit in $\R$.
       \end{enumerate}
 \end{theorem}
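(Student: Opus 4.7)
The plan is to obtain Theorem~\ref{thm:main2} by reducing it to Theorem~\ref{thm:main1} through the change of variable $t=|y|$, $y\in\R^2$. Given $u\in L^1_0(\R^N)$, I would set
\begin{displaymath}
W_u(x,y):=w_u(x,|y|)\qquad\text{for }(x,y)\in\R^{N+2}_*,
\end{displaymath}
with $w_u$ the function produced by Theorem~\ref{thm:main1}. The Poisson formula~(i) is then immediate from~\eqref{eq:poisson-formula-wu}, and items (ii), (iii), (iv) together with the asymptotic $\lim_{|y|\to\infty}W_u(x,y)=0$ are direct translations of the corresponding statements of Theorem~\ref{thm:main1} under the renaming $t=|y|$, since the $y$-rotational symmetry of $W_u$ makes $t\to 0^+$ and $|y|\to 0$ describe the same limit. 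Smoothness $W_u\in C^\infty(\R^{N+2}_*)$ follows because $w_u$ is smooth on $\R^{N+1}_+$ and $y\mapsto|y|$ is smooth on $\R^2\setminus\{(0,0)\}$, while $W_u\in\cAG(\R^{N+2})$ transfers from $w_u\in\cAG(\R^{N+1}_+)$ via polar coordinates in $y$. The substantive work therefore lies in (a) identifying the distributional source in~\eqref{eq:distributional-2-extension-thm2} and (b) uniqueness.

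For (a), the pointwise calculation on $\R^{N+2}_*$ uses that $\Delta_y$ acting on the $y$-radial function $W_u$ equals the two-dimensional radial Laplacian $\partial_r^2+\tfrac{1}{r}\partial_r$ with $r=|y|$, so that
\begin{displaymath}
-\Delta_{(x,y)}W_u(x,y)=-\frac{1}{r}\bigl[r\,\Delta_x w_u(x,r)+\partial_r(r\,\partial_r w_u(x,r))\bigr],
\end{displaymath}
which vanishes by~\eqref{eq:Poisson-problem-thm:main1}. To capture the source at $\{y=(0,0)\}$, I would test against $\phi\in C_c^\infty(\R^{N+2})$ and pass to its angular average in $y$,
\begin{displaymath}
\tilde\phi(x,r):=\frac{1}{2\pi}\int_0^{2\pi}\phi(x,r\cos\theta,r\sin\theta)\,d\theta,
\end{displaymath}
since the non-radial part of $\phi$ integrates to zero against the $y$-radial $W_u$. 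Polar coordinates in $y$ convert $\int_{\R^{N+2}}W_u(-\Delta\phi)\,dx\,dy$ into $-2\pi\int_{\R^{N+1}_+}w_u\bigl[\Delta_x\tilde\phi+\tfrac{1}{r}\partial_r(r\,\partial_r\tilde\phi)\bigr]\,r\,dx\,dr$. A double integration by parts in the $(x,r)$ variables, whose boundary contribution at $r=0$ vanishes because $\tilde\phi$ is a smooth function of $r^2$ (hence $\partial_r\tilde\phi(\cdot,0)=0$) while $r\,w_u\sim-r\,u\ln r\to 0$, reduces this to $2\pi\int_{\R^{N+1}_+}r\,\nabla w_u\cdot\nabla\tilde\phi\,dx\,dr$. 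This last integral is precisely the weak formulation of the co-normal boundary condition~\eqref{eq:thm:main1-asymptotic-1} tested against $\tilde\phi$, and therefore equals $2\pi\int_{\R^N}u(x)\tilde\phi(x,0)\,dx=\langle 2\pi u\,\LNdoo,\phi\rangle$, as required.

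For (b), suppose $\tilde W$ is a second distributional solution with the listed properties and set $V:=W_u-\tilde W$. Then $V\in\cAG(\R^{N+2})$ satisfies $-\Delta V=0$ distributionally on all of $\R^{N+2}$, so Weyl's lemma makes $V$ smooth and classically harmonic everywhere. Interior estimates for harmonic functions combined with the integral algebraic-growth bound upgrade this to pointwise algebraic growth, after which the classical polynomial Liouville theorem forces $V$ to be a polynomial in $(x,y)$. The hypothesis $\lim_{|y|\to\infty}V(x,y)=0$ for every fixed $x$ kills every monomial involving $y$ (otherwise $V(x_0,\cdot)$ would be a non-constant polynomial in $y$ for some $x_0$), leaving $V=P(x)$; the same limit condition at any large fixed $|y|$ then forces $P\equiv 0$.

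The step I expect to be the main obstacle is the source identification: the logarithmic boundary blow-up $w_u\sim-u\ln r$ as $r\to 0^+$ must be handled carefully inside the integration by parts, and the factor $2\pi$ arising from polar coordinates in $y$ has to appear at exactly one place. The two observations that make the calculation work are that the angular average $\tilde\phi$ is automatically smooth as a function of $r^2$, so $\partial_r\tilde\phi(\cdot,0)=0$ kills the would-be boundary contribution $r\,w_u\,\partial_r\tilde\phi\big|_{r=0}$ despite the blow-up of $w_u$, and that $\tilde\phi$ has compact support in $(x,r)\in\R^{N+1}$, which legitimises testing the co-normal boundary identity of Theorem~\ref{thm:main1} against it.
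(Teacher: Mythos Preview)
Your proposal is correct and takes essentially the same approach as the paper. The paper organizes the source identification by first passing through the even-reflection formulation on $\R^{N+1}$ (Theorem~\ref{intro-even-reflection}) and only then applying cylinder coordinates, whereas you integrate by parts directly from the Neumann problem on the half-space; the mathematical content---reduction to $y$-radial test functions via angular averaging, polar coordinates, and Weyl's lemma plus the polynomial Liouville theorem for uniqueness---is identical.
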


Theorem~\ref{thm:main2} reveils that 
the logarithmic Laplacian $\loglap$ is intimately related with harmonic
functions $w_{u}$ on $\R^{N+2}_*$. As an application of Theorem~\ref{thm:main2}, we can establish a weak unique
continuation property for this operator. More precisely, we have the
following result.

\begin{theorem}\label{thm:main3}
  Let $u \in L^1_0(\R^N)$ be a function with the property that there is a
  nonempty open subset $\Omega \subset \R^N$ such that
  \begin{displaymath}
    u \equiv 0 \quad \text{in $\Omega$} \quad \quad \text{and} \quad
    \quad \loglap u \equiv 0 \quad 
    \text{in $\Omega$ in the distributional sense.}
  \end{displaymath}
  Then $u \equiv 0$ on $\R^N$. 
\end{theorem}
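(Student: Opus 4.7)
The plan is to lift $u$ to the two-dimensional extension $W_u$ on $\R^{N+2}$ furnished by Theorem~\ref{thm:main2}, translate the hypotheses on $u$ into vanishing properties of this extension, and close the argument using real-analytic unique continuation.

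First I would form $W_u\in \cAG(\R^{N+2})\cap C^\infty(\R^{N+2}_*)$ from Theorem~\ref{thm:main2}, which is a distributional solution of $-\Delta W_u = 2\pi u\,\LNdoo$ on $\R^{N+2}$. Since $u\equiv 0$ on $\Omega$, the Dirac source on the right-hand side vanishes on the open set $\Omega\times\R^2$, so by Weyl's lemma $W_u$ becomes harmonic, and hence real-analytic, there. Combined with smoothness on $\R^{N+2}_*$, the function $W_u$ is then real-analytic on
\begin{displaymath}
U := \R^{N+2}_*\cup(\Omega\times\R^2) = \R^{N+2}\setminus\bigl((\R^N\setminus\Omega)\times\{(0,0)\}\bigr),
\end{displaymath}
which is connected because the removed set has codimension $2$ in $\R^{N+2}$. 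Applying~\eqref{eq:characterization-loglap-2-dim} on $\Omega$ together with $\loglap u\equiv 0$ and $u\equiv 0$ there, I would read off $\lim_{|y|\to 0}W_u(\cdot,y)=0$ in the distributional sense on $\Omega$; continuity of $W_u$ up to $\Omega\times\{(0,0)\}$ promotes this to a classical identity, so $W_u$ vanishes on the slice $\Omega\times\{(0,0)\}$.

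The decisive step is to upgrade this codimension-$2$ vanishing to vanishing on an open subset of $U$. The Poisson formula~\eqref{eq:poisson-formula-loglap-2-dim} shows that $W_u$ is radial in $y\in\R^2$, so on a neighborhood of $\Omega\times\{(0,0)\}$ it admits an expansion
\begin{displaymath}
W_u(x,y) = \sum_{k\ge 0} b_k(x)\,|y|^{2k}
\end{displaymath}
with smooth coefficients $b_k$ on $\Omega$. Substituting into $\Delta W_u=0$ and using the $2$-dimensional identity $\Delta_y|y|^{2k}=(2k)^2|y|^{2k-2}$ yields the recursion
\begin{displaymath}
b_{k+1}(x) = -\tfrac{1}{(2k+2)^2}\,\Delta_x b_k(x),\qquad k\ge 0.
\end{displaymath}
Starting from $b_0\equiv 0$ on $\Omega$ and iterating, every coefficient $b_k$ vanishes identically on $\Omega$. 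Real-analyticity of $W_u$ on $\Omega\times\R^2$ ensures that this Taylor expansion actually represents $W_u$ on some nonempty open tube $\Omega'\times B_\varepsilon((0,0))\subset U$, on which therefore $W_u\equiv 0$.

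Once $W_u$ vanishes on a nonempty open subset of the connected open set $U$, real-analytic unique continuation forces $W_u\equiv 0$ on all of $U$, and in particular on $\R^{N+2}_*$. Part~(ii) of Theorem~\ref{thm:main2} then gives
\begin{displaymath}
-u=\lim_{|y|\to 0}\frac{W_u(\cdot,y)}{\ln|y|}=0 \quad \text{in } L^1_{loc}(\R^N),
\end{displaymath}
whence $u\equiv 0$. The main obstacle is the middle step: the bare vanishing of $W_u$ on the codimension-$2$ trace $\Omega\times\{(0,0)\}$ would not, on its own, force a generic harmonic function to vanish on a neighborhood. It is the radial symmetry of $W_u$ in the two variables $y$, inherited from the explicit Poisson kernel, that forces all even Taylor coefficients in $|y|^2$ to vanish over $\Omega$, thereby producing the open vanishing set needed to activate real-analytic unique continuation on $U$.
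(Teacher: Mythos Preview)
Your proof is correct and follows essentially the same route as the paper: lift to the harmonic extension $W_u$ on $\R^{N+2}$, show harmonicity on the same open connected set $U$, deduce $W_u(\cdot,0)\equiv 0$ on $\Omega$ from the characterization~\eqref{eq:characterization-loglap-2-dim}, exploit radiality in $y$ together with $\Delta W_u=0$ to kill the full Taylor expansion over $\Omega$, and then invoke analytic continuation and~\eqref{u-limit-L-1-loc-2-dim}. The only cosmetic difference is that the paper organizes the key step by showing inductively that $\Delta_y^k W_u(x_0,0)=0$ for all $k$ and $x_0\in\Omega$ (whence, by radiality, all $y$-derivatives and hence all mixed derivatives vanish at such points), whereas you write the radial Taylor series $\sum_k b_k(x)|y|^{2k}$ and derive the recursion $b_{k+1}=-\tfrac{1}{(2k+2)^2}\Delta_x b_k$; these are two presentations of the same computation.
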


We wish to point out at this stage that we are able to deduce the extension problems associated with $\loglap$ only on a formal level from the Caffarelli-Silvestre extension problem (\ref{Ext-s}), (\ref{eq:caffarelli-silvestre-boundary-representiation}), while we need to use the integral representation (\ref{eq:log-laplace}) for a rigorous complete proof. However, the formal derivations, starting with the Caffarelli-Silvestre extension, were extremely helpful to identify the precise form of the extension problem for $\loglap$, and therefore we include these derivations in the present paper in Section~\ref{sec:formal-derivations} below.

We close this introduction with further comments on extension properties for nonlocal operators and their applications.

  The extension property of the fractional Laplacian $(-\Delta)^s$ was
  first known in the square root case $s=1/2$, and, in particular, for
  the square root $A^{1/2}$ of a quite general class of linear operators
  $A$ defined on Banach spaces (and, more generally, on locally convex
  spaces, cf~\cite{MR1850825} and there reference therein). Since the
  work~\cite{CS} by Caffarelli and Silvestre, the extension property of
  the fractional Laplacian was generalized in various direction: for
  linear second order partial differential operators (including
  Schr\"odinger operators), which can be realized as nonnegative,
  densely defined, and self-adjoint operator on $L^2(\Omega,d\eta)$,
  where $\Omega$ is an open subset of $\R^N$ and $d\eta$ a positive
  measure on $\Omega$ (see, e.g.,~\cite{MR2754080}), for heat operators
  (cf~\cite{MR3709888}), for (linear) sectorial operators $A$ defined on
  Hilbert spaces (see, for instance, \cite{MR3772192}) on Banach spaces
  (see, e.g., \cite{MR3056307,MR4151098}, and for nonlinear accretive
  operators $A$ in Hilbert spaces \cite{MR4026441}. Another important
  breakthrough in this field has been established by Kwa\'{s}nicki and
  Mucha \cite{MR3859452} (see also \cite{assing2019extension}) showing
  that the nonlocal operator $\varphi(-\Delta)$ also admits an extension
  property provided $\varphi : (0,\infty)\to [0,\infty]$ is a so-called
  complete Bernstein function. This result has been made available for
  more general diffusion operators in~\cite{assing2019extension}. The
  result \cite{MR3859452} has been refined and generalized for linear
  $m$-accretive operators defined on Banach space
  in~\cite{HauerLee}. Note, the power function $f(\lambda)=\lambda^s$
  for $\lambda\ge 0$ and $0<s<1$ is a complete Bernstein function. In
  particular, $f(\lambda)=\ln (\lambda+1)$ for $\lambda\ge 0$ is a
  complete Bernstein function, but $\ln \lambda$ for $\lambda>0$ is not
  a Bernstein function. Thus, the extension problem obtained for the
  logarithmic Laplacian $\loglap$ obtained in Theorem~\ref{thm:main1} is
  quite a surprise, and extends the existing literature in this field of
  research.\bigskip



  In the literature, one finds numerous applications of the extension property
  of the fractional Laplacian $(-\Delta)^s$. For example, it was applied to derive
  monotonicity formulas and regularity estimates for solutions and free boundary problems governed by the fractional Laplacian and the fractional perimeter (see, e.g., \cite{TVZ,CS0,CRS}), to establish $L^1$-$L^{\infty}$ regularity estimates
  of solutions to the fractional porous medium equation (see, e.g.,
  \cite{MR2737788,MR2954615}), to prove unique continuation principles (see, e.g.,
  \cite{FF,R-2014,MR4083776}) which were used, in \cite{MR4083776}, to solve the fractional Calder\'on problem,
  to derive spectral estimates for eigenvalue problems governed by the fractional Laplacian (see, e.g. \cite{FL,FLS})
  or to provide a classification of isolated
  singularities of solutions to a semilinar fractional Laplace equation
  on a punctured ball, where the nonlinearity has a power growth (see
  \cite{A0,CV2}).

  We hope that the extension property derived in this paper allows to extend some of these applications to the logarithmic Laplacian. Theorem~\ref{thm:main3} is a first example in this regard.\bigskip


 The organization of this paper is as follows. In Section~\ref{sec:first-derivation}, we provide two \emph{formal}
 derivations of the extension problem for the logarithmic Laplacian
 $\loglap$ starting from the Cafferalli-Silvestre extension (\ref{eq:caffarelli-silvestre-boundary-representiation}).
 Here we require additional hypothesis including regularity assumptions on $u$ and, on an abstract level, the existence of a uniform limit $v_0$ of $s$-dependent functions $v_s$ arising in the asymptotic expansion of the Cafferalli-Silvestre extension $w_s$ in (\ref{eq:caffarelli-silvestre-boundary-representiation}). Then, in Section~\ref{sec:equiv-form-poiss-1}, we derive the unique existence and integral representation of solutions of the degenerate Neumann problem in Theorem~\ref{thm:main1} and of the Poisson problems in Theorems~\ref{intro-even-reflection} and \ref{thm:main2}. In Section~\ref{sec:rigor-deriv-extens}, 
 we rigorously prove the characterizations (\ref{eq:characterization-loglap}) and (\ref{eq:characterization-loglap-pointwise}) {\em without} using the Cafferalli-Silvestre extension but using the integral representation (\ref{eq:log-laplace}) instead. Section~\ref{sec:an-application:-weak} is dedicated to the proof of weak unique contination property for $\loglap$ stated in Theorem~\ref{thm:main3}. Finally, in the Appendix, we provide the calculation of some integrals which we need in our proofs and which we could not find in the literature.


\setcounter{equation}{0}
\section{Formal derivations of the extension problem for the logarithmic Laplacian}
\label{sec:formal-derivations}

In this section we derive the representation formula (\ref{eq:characterization-loglap-pointwise}) on a formal level, starting from the Caffarelli-Silvestre extension problem for the fractional Laplacian.

Let $u \in C^\beta_c(\R^N)$ for some $\beta>0$, and let $s \in (0,\beta)$ in the following. Moreover, let $w_s:\R^{N+1}_+ \to \R$ denote the $s$-harmonic Caffarelli-Silvestre extension of $u$, which solves (\ref{Ext-s}) and gives rise to the representation 
  \begin{equation}
  \label{eq:caffarelli-silvestre-boundary-representiation-1}
  (-\Delta)^s u(x) = - d_s \lim_{t \to 0^+} t^{1-2s}\partial_t w_s(x,t) \qquad \text{for $x \in \R^N$}
\end{equation}
 with $d_s$ given by \eqref{c-s-extension-d-s-constant}, i.e., $d_s = 2^{2s-1} \frac{\Gamma(s)}{\Gamma(1-s)}$. By l'Hopital's rule, we may rewrite (\ref{eq:caffarelli-silvestre-boundary-representiation}) as 
  \begin{equation}
  \label{eq:caffarelli-silvestre-boundary-representiation-2}
[(-\Delta)^s u](x)= - 2s
d_s \lim_{t \to 0}\frac{w_s(x,t)-w_s(x,0)}{t^{2s}}=- 2s d_s \lim_{t \to
  0}\frac{w_s(x,t)-u(x)}{t^{2s}}\qquad \text{for $x \in \R^N$.}  
\end{equation}
Moreover, it is easy to see from the precise form of $d_s$ that
\begin{equation}
   \label{eq:d-s-asymptotics}
s d_s \to 1/2 \quad \text{and}\quad 
\frac{2s d_s-1}{s} = \frac{2^{2s} \frac{\Gamma(1+s)}{\Gamma(1-s)}-1}{s}
\to 2 \ln 2 + 2 \Gamma'(1)= 2(\ln 2 - \gamma) \quad \text{as
  $s \to 0^+$.}   
 \end{equation}
In order to characterize $\loglap u$ by an extension problem, it is useful to define $v_s:\R^{N+1}_+ \to \R$ for $s \in (0,\beta)$ by 
\begin{equation}
  \label{eq:def-v-s}
v_s(x,t)=\frac{2}{s} \Bigl(\frac{w_s(x,t)-u(x)}{t^{2s}}+ u(x)\Bigr). 
\end{equation}
With the help of (\ref{eq:caffarelli-silvestre-boundary-representiation-2}) and (\ref{eq:d-s-asymptotics}), we then compute that 
\begin{align}
  \loglap u (x)&= \lim_{s \to 0^+}\frac{(-\Delta)^s u(x) - u(x)}{s}=
   - \lim_{s \to 0^+} \lim_{t\to 0^+}\Bigl(2 d_s \frac{w_s(x,t)-u(x)}{t^{2s}} +\frac{u(x)}{s}\Bigr)\nonumber \\
               &=  - \lim_{s \to 0^+} \lim_{t\to 0^+}\Bigl(s d_s v_s(x,t) -
                 \frac{2s d_s -1}{s}u(x)\Bigr)\nonumber\\
  &=-\frac{1}{2}
   \lim_{s \to 0^+}\lim_{t\to 0^+}v_s(x,t)  + 2(\ln 2-\gamma) u(x).\label{first-char-loglap}
\end{align}
This is the starting point of our formal derivations of the extension problem for $\loglap u$, which are based on assuming the existence of 
\begin{equation}
  \label{eq:v0-limit}
v_0(x,t):= \lim_{s \to 0^+} v_s(x,t) \quad \text{as a locally uniform limit in $(x,t) \in \overline{\R^{N+1}_+},$}   
\end{equation}
Under this assumption, we may change the order of limits in (\ref{first-char-loglap}) and arrive at 
\begin{equation}
\label{first-char-loglap-1}
\loglap u(x) = \frac{1}{2} \lim_{t \to 0^+}v_0(x,t)  + 2 (\ln 2 - \gamma)u(x)  
\end{equation}
So it remains to characterize the limit $\lim_{t \to 0^+}v_0(x,t)$ via a suitable local extension problem, and this will be done in two different ways in the following subsections. Our first derivation only uses the extension problem (\ref{Ext-s}), (\ref{eq:caffarelli-silvestre-boundary-representiation}) in its differential form, while the second derivation will use a Poisson kernel representation of the Caffarelli-Silvestre extension.

\subsection{A first formal derivation of the
  extension problem}
\label{sec:first-derivation}

For a first formal derivation of an extension problem for $\loglap u$, we assume, for simplicity, throughout this section that 
$$
u \in C^2_c(\R^N)
$$ 
is fixed. The idea is to first derive a partial differential equation for the functions $v_s$ defined in (\ref{eq:def-v-s}) and then for $v_0$ by passing to the limit $s \to 0^+$. For this it is convenient to extend $w_s$ and $v_s$ as even functions in $t$ on all of $\R^{N+1}$. Then we can rewrite (\ref{eq:def-v-s}) as  
\begin{equation} \label{eq trans-1}
  w_s(x,t) = (1-|t|^{2s})\,u(x) +   \frac{s\, |t|^{2s}}{2}\, v_s(x,t)\qquad \text{for $(x,t) \in \R^{N+1}$.}
\end{equation}

We start with the following observation, which might be of independent interest.
\begin{lemma}
  \label{lemma:distibutional-sense-caffarelli-silvestre}
Let $0 <s <\frac{1}{2}$. Then 
$w_s: \R^{N+1} \to \R$ satisfies  
\begin{equation}
  \label{eq:distibutional-sense-caffarelli-silvestre}
-\div (|t|^{1-2s} \nabla w_s) = \frac{2}{d_s}[(-\Delta)^s
u] \LNdo
\quad  {\rm in}\ \, \R^{N+1} 
\end{equation}
in the distributional sense. In
other words, for all $\phi \in C^\infty_c(\R^{N+1})$, one has that
\begin{displaymath}
  \label{eq:distibutional-sense-caffarelli-silvestre-definition}
  \int_{\R^{N+1}} w_s \Bigl(-\div (|t|^{1-2s} \nabla \phi)\Bigr)\,dxdt  
  = \frac{2}{d_s}\int_{\R^N} [(-\Delta)^s u](x)\phi(x,0)\,dx.
\end{displaymath}
\end{lemma}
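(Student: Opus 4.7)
The plan is to test the identity against an arbitrary $\phi\in C^\infty_c(\R^{N+1})$, split the integral over $\R^{N+1}$ into contributions from the two open half-spaces $\R^{N+1}_\pm$, and integrate by parts on each half-space. This is natural because $w_s$ is a classical solution of $-\div(|t|^{1-2s}\nabla w_s)=0$ in $\R^{N+1}_+$ (by the Caffarelli--Silvestre problem~\eqref{Ext-s}) and, after even reflection in $t$, also in $\R^{N+1}_-$, so the whole identity will come from boundary terms at $t=0$ that match the co-normal derivative identification~\eqref{eq:caffarelli-silvestre-boundary-representiation-1}.

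First I would approximate $\R^{N+1}_+$ by the slab $\R^N\times(\epsilon,\infty)$ and apply the weighted Green identity with weight $A(t)=|t|^{1-2s}=t^{1-2s}$. Since $\div(A\nabla w_s)=0$ in this slab and $\phi$ is compactly supported, the only surviving boundary contribution is at $t=\epsilon$ (with outward normal $-e_{N+1}$), giving
\begin{equation*}
\int_{\R^N\times(\epsilon,\infty)} w_s\bigl(-\div(A\nabla\phi)\bigr)\,dx\,dt
=\int_{\R^N}\epsilon^{1-2s}\Bigl[w_s(x,\epsilon)\partial_t\phi(x,\epsilon)-\phi(x,\epsilon)\partial_tw_s(x,\epsilon)\Bigr]dx.
\end{equation*}
Because $s<1/2$, the weight $\epsilon^{1-2s}\to 0$, and $w_s(\cdot,\epsilon)$ is bounded on $\supp\phi$ (it converges uniformly to $u$), so the first boundary term vanishes as $\epsilon\to 0^+$. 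For the second, I invoke~\eqref{eq:caffarelli-silvestre-boundary-representiation-1} in the locally uniform form $\epsilon^{1-2s}\partial_tw_s(\cdot,\epsilon)\to -\frac{1}{d_s}(-\Delta)^s u$, which is standard for $u\in C^2_c(\R^N)$. Passing to the limit and using that the weight $|t|^{1-2s}$ is locally integrable (again $s<1/2$) to justify $\int_{\R^{N+1}_+}w_s(-\div(A\nabla\phi))\,dx\,dt=\lim_\epsilon\int_{\R^N\times(\epsilon,\infty)}\cdots$ by dominated convergence, I obtain
\begin{equation*}
\int_{\R^{N+1}_+} w_s\bigl(-\div(|t|^{1-2s}\nabla\phi)\bigr)\,dx\,dt=\frac{1}{d_s}\int_{\R^N}\phi(x,0)\,[(-\Delta)^s u](x)\,dx.
\end{equation*}

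The lower half-space is treated in the same way on $\R^N\times(-\infty,-\epsilon)$. Using the evenness $w_s(x,-t)=w_s(x,t)$ and consequently $\partial_tw_s(x,-\epsilon)=-\partial_tw_s(x,\epsilon)$, combined with the fact that the outward normal at $t=-\epsilon$ is now $+e_{N+1}$ (flipping the sign in $\partial_n$), the two sign changes cancel and the boundary integral at $t=-\epsilon$ contributes exactly the same limit $\frac{1}{d_s}\int_{\R^N}\phi(x,0)[(-\Delta)^s u](x)\,dx$. Adding the two half-space contributions yields the factor $\tfrac{2}{d_s}$ in the statement, completing the identity.

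The main obstacle is the passage to the limit $\epsilon\to 0^+$ in the boundary term $\int_{\R^N}\epsilon^{1-2s}\phi(x,\epsilon)\partial_tw_s(x,\epsilon)\,dx$: pointwise convergence of $\epsilon^{1-2s}\partial_tw_s$ is not sufficient, and one needs either locally uniform convergence on $\supp\phi$ together with a local $L^\infty$ bound on $|t|^{1-2s}|\partial_tw_s|$ near $t=0$, or an equi-integrable majorant, in order to apply dominated convergence. For $u\in C^2_c(\R^N)$ both are available through the Poisson representation of the Caffarelli--Silvestre extension and standard regularity up to the boundary; the restriction $s<1/2$ enters precisely here (and in the vanishing of the companion boundary term $\epsilon^{1-2s}w_s\partial_t\phi$) to guarantee that the non-co-normal boundary pieces disappear in the limit.
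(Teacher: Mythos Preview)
Your proposal is correct and follows essentially the same approach as the paper: integrate by parts on $\{|t|>\eps\}$, use $s<\tfrac12$ to kill the boundary term $\eps^{1-2s}\!\int w_s\,\partial_\nu\phi$, and identify the remaining boundary term via~\eqref{eq:caffarelli-silvestre-boundary-representiation-1} together with the evenness of $w_s$ in $t$. The only cosmetic difference is that the paper treats both half-spaces simultaneously on $\{|t|>\eps\}$ rather than handling $\R^{N+1}_+$ and $\R^{N+1}_-$ separately.
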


\begin{proof}
Let $\phi \in C^\infty_c(\R^{N+1})$. Then integrating by parts twice in the
equation \eqref{Ext-s} yields that
\begin{align*}
 \int_{\R^{N+1}}w_s\, \div (|t|^{1-2s} \nabla \phi)\,dx\,dt 
&= \lim_{\eps \to 0^+}\int_{\R^{N}\times\{t\,:\,|t|> \eps\}}w_s\, \div
  (|t|^{1-2s} \nabla \phi)\,dx\,dt \\
&= \lim_{\eps \to 0^+}\eps^{1-2s} \int_{\R^{N}\times\{t\,:\, |t|=
  \eps\}}
  w_s\, \partial_{\nu} \phi\,d\sigma(x,t),\\
&\hspace{1.5cm} -\lim_{\eps \to 0^+}\int_{\R^{N}\times\{t\,:\,|t|> \eps\}}
  |t|^{1-2s}\nabla w_s\nabla \phi\,dx\,dt\\  
&= \lim_{\eps \to 0^+}\eps^{1-2s} \int_{\R^{N}\times\{t\,:\, |t|=
  \eps\}}
  \bigl(w_s \partial_{\nu} \phi -  \phi \partial_{\nu} w_s\bigr)\,d\sigma(x,t), 
\end{align*}
where $\nu(x,t)= -\frac{t}{|t|}.$ For $0 <s <\frac{1}{2}$ we have, since
$w_s$ is continuous on $\R^N$ and $\phi \in C^\infty_c(\R^{N+1})$,
\begin{displaymath}
\lim_{\eps \to 0}\eps^{1-2s} \int_{|t|= \eps}w_s \partial_\nu \phi\,d\sigma(x,t)= 0.
\end{displaymath}
Moreover, it follows from
(\ref{eq:caffarelli-silvestre-boundary-representiation}) and the
evenness of the function $w_s$ in $t$ that
\begin{align*}
(-\Delta)^s u(x) &= d_s \lim_{t \to 0^+} t^{1-2s}\partial_\nu w_s(x,t)\\
&=\frac{d_s}{2}\Big[ 
-\lim_{t \to 0^+} t^{1-2s}\partial_t
w_s(x,t)\mathds{1}_{\R^{n+1}_{+}}(x,t)
-\lim_{t \to 0^-} |t|^{1-2s}\partial_t
w_s(x,|t|)\mathds{1}_{\R^{n+1}_{-}}(x,t)\Big]
\\
&= \frac{d_s}{2} \lim_{t \to 0} |t|^{1-2s}\partial_\nu w_s(x,|t|).
\end{align*}
Therefore, 
\begin{align*}
   \int_{\R^{N+1}}w_s \div (|t|^{1-2s} \nabla \phi)\,dxdt  
  &= - \lim_{\eps \to 0^+}\eps^{1-2s} \int_{|t|= \eps}\phi(x,t)  \partial_\nu w_s(x,t) \,dxdt \\
  &= -\lim_{\eps \to 0^+}\int_{|t|=   \eps}\phi(x,t) |t|^{1-2s} \partial_\nu w_s(x,t) \,d\sigma(x,t)
\\&=- \frac{2}{d_s} \int_{\R^N}\phi(x,0) (-\Delta)^s u(x)\,dx
\end{align*}
as claimed.
\end{proof}

Next, we show which equation $v_s$ solves in the sense of distributions. 

\begin{lemma}\label{lem:2-2}
  Let $0 <s <\frac{1}{2}$. Then the function
  $v_s: \R^{N+1} \to \R$ given by the asymptotic ansatz \eqref{eq trans-1} satisfies  
    \begin{equation}
        \label{eq:solved-by-vs}
        \begin{split}
          - \div (|t| \nabla v_s) &= 2\,s \frac{t}{|t|} \partial_t v_s
          +4\,(s\, v_s-2\,u)\LNdo\\
          &\hspace{2cm} + \frac{4}{s\,d_s}(-\Delta)^s u \LNdo
          +2\,|t|\, \frac{|t|^{-2s}-1}{s} \Delta_x u
        \end{split}
      \end{equation}
      in $\R^{N+1}$ in the distributional sense.
\end{lemma}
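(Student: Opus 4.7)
The plan is to derive the distributional equation~\eqref{eq:solved-by-vs} for $v_s$ by substituting the ansatz~\eqref{eq trans-1} into the identity provided by Lemma~\ref{lemma:distibutional-sense-caffarelli-silvestre} and then expanding the derivatives with care near $\{t=0\}$. The first move is to rewrite~\eqref{eq trans-1} in the form $w_s = u + |t|^{2s}g_s$, where
\begin{displaymath}
g_s := \tfrac{s}{2}\,v_s - u,
\end{displaymath}
which isolates the piece carrying the singular weight $|t|^{2s}$ from the smooth background $u$. Using $\partial_t|t|^{2s} = 2s|t|^{2s-1}\sgn(t)$, one then finds
\begin{displaymath}
|t|^{1-2s}\nabla_x w_s = |t|^{1-2s}\nabla u + |t|\nabla_x g_s \quad\text{and}\quad |t|^{1-2s}\partial_t w_s = 2s\sgn(t)\,g_s + |t|\,\partial_t g_s,
\end{displaymath}
so that the singular factor $|t|^{1-2s}$ disappears from the part multiplying $g_s$.

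Next, I would take the divergence, tracking carefully the delta contribution arising from $\partial_t\sgn(t) = 2\delta_0$, while noting that $\partial_t|t| = \sgn(t)$ contributes no delta because $|t|$ is continuous at $0$. This yields, in the distributional sense on $\R^{N+1}$,
\begin{displaymath}
\div(|t|^{1-2s}\nabla w_s) = |t|^{1-2s}\Delta u + |t|\Delta_x g_s + 4s\,g_s(\cdot,0)\LNdo + (2s+1)\sgn(t)\,\partial_t g_s + |t|\,\partial_t^2 g_s.
\end{displaymath}
Equating $-\div(|t|^{1-2s}\nabla w_s)$ with $\tfrac{2}{d_s}(-\Delta)^s u\,\LNdo$ via Lemma~\ref{lemma:distibutional-sense-caffarelli-silvestre}, substituting $\Delta_x g_s = \tfrac{s}{2}\Delta_x v_s - \Delta u$, $\partial_t g_s = \tfrac{s}{2}\partial_t v_s$, $\partial_t^2 g_s = \tfrac{s}{2}\partial_t^2 v_s$ and $g_s(\cdot,0) = \tfrac{s}{2}v_s(\cdot,0) - u$, and finally using
\begin{displaymath}
-\div(|t|\nabla v_s) = -|t|(\Delta_x + \partial_t^2)v_s - \sgn(t)\partial_t v_s
\end{displaymath}
to isolate the left-hand side of~\eqref{eq:solved-by-vs}, the $\Delta u$-terms combine to $2|t|\tfrac{|t|^{-2s}-1}{s}\Delta_x u$, the $\sgn(t)\partial_t v_s$-terms collapse to $2s\,\tfrac{t}{|t|}\partial_t v_s$, and the delta contributions furnish precisely $4(sv_s - 2u)\LNdo + \tfrac{4}{sd_s}(-\Delta)^s u\,\LNdo$.

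The main technical obstacle is to make this distributional calculation rigorous in a neighbourhood of $\{t=0\}$: one has to justify applying $\partial_t\sgn(t) = 2\delta_0$ against a multiplier that is continuous across $\{t=0\}$, so that $\delta_0 \cdot g_s$ is legitimately read as $g_s(\cdot,0)\,\LNdo$. The cleanest way is to imitate the strategy in the proof of Lemma~\ref{lemma:distibutional-sense-caffarelli-silvestre}: pair both sides of~\eqref{eq:solved-by-vs} with $\phi\in C^\infty_c(\R^{N+1})$, integrate by parts on the slabs $\{|t|>\eps\}$ where all functions are smooth, and pass to the limit $\eps\to 0^+$, using the ansatz together with the pointwise asymptotics of $w_s$ to identify the boundary contributions on $\{|t|=\eps\}$. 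The hypothesis $s\in(0,\tfrac{1}{2})$ enters exactly here, guaranteeing local integrability of $|t|^{1-2s}$ and the vanishing of the remaining boundary integrals.
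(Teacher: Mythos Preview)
Your proposal is correct and ultimately relies on the same machinery as the paper's proof: both arguments reduce to pairing against $\phi\in C_c^\infty(\R^{N+1})$, integrating by parts on the slabs $\{|t|>\eps\}$, and identifying the boundary terms as $\eps\to 0^+$ via the asymptotics of $w_s$ near $t=0$. The paper carries out this weak-formulation calculation directly, substituting the ansatz into $\int w_s\,\div(|t|^{1-2s}\nabla\phi)$ and unwinding the integrations by parts term by term. Your organization is somewhat cleaner: the substitution $w_s=u+|t|^{2s}g_s$ with $g_s=\tfrac{s}{2}v_s-u$ absorbs the singular weight $|t|^{1-2s}$ early, so that the subsequent algebra involves only the weight $|t|$ and the jump of $\sgn(t)$, and you perform a strong-form heuristic first before invoking the $\eps$-slab argument for rigor. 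Both routes need the continuity of $v_s$ (equivalently $g_s$) across $\{t=0\}$, which follows from the boundary representation~\eqref{eq:caffarelli-silvestre-boundary-representiation-2}, to make sense of the trace term $g_s(\cdot,0)\LNdo$; the paper encounters the same point in its final integration by parts, where $v_s(x,0)$ appears explicitly.
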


\begin{proof}
    Indeed, by \eqref{eq:distibutional-sense-caffarelli-silvestre}, and
    by applying the asymptotic ansatz \eqref{eq trans-1}, one sees that
    \allowdisplaybreaks
    \begin{align*}
      &-\frac{4}{s \,d_s}\int_{\R^N} (-\Delta)^s u(x)\phi(x,0)\,dx\\ 
      &\qquad = \frac{2}{s}\int_{\R^{N+1}}w_{s} \,\div(|t|^{1-2s} \nabla \phi)\,dxdt\\
      &\qquad = \int_{\R^{N+1}}\Bigl(2\,\frac{1-|t|^{2s}}{s}\,u(x) +    
        |t|^{2s}\, v_s(x,t)\Bigr)\, \div (|t|^{1-2s} \nabla
        \phi)\,dxdt\\
      &\qquad =
        -\int_{\R^{N+1}}|t|^{1-2s}\,\nabla\Bigl(2\,\frac{1-|t|^{2s}}{s}\,u(x) 
        \Bigr)\nabla \phi\,dxdt+\int_{\R^{N+1}}    
        |t|^{2s}\, v_s(x,t)\,\div(|t|^{1-2s}\nabla\phi)\,dxdt\\
       &\qquad =
        -\int_{\R^{N+1}}|t|^{1-2s}\,2\,\frac{1-|t|^{2s}}{s}\,\nabla_{x}u(x)\nabla_{x}\phi\,dxdt\\
       &\hspace{2cm}
         -\int_{\R^{N+1}}|t|^{1-2s}\,2\,u(x)\,\partial_{t}\Bigl(\frac{1-|t|^{2s}}{s}\Bigr)\partial_t\phi\,dxdt\\
      &\hspace{4cm} 
        +\int_{\R^{N+1}}    
        |t|^{2s}\, v_s(x,t)\,\div(|t|^{1-2s}\nabla\phi)\,dxdt\\    
      &\qquad =
        -\int_{\R^{N+1}}|t|^{1-2s}\,2\,\frac{1-|t|^{2s}}{s}\,\nabla_{x}u(x)\nabla_{x}\phi\,dxdt
         +4\,\int_{\R^{N+1}}u(x)\,\frac{t}{|t|}\,\partial_t\phi\,dxdt\\
      &\hspace{4cm} 
        +\int_{\R^{N+1}}    
        v_s(x,t)\,\Bigl(\div_x(|t|\nabla_x\phi)+
        (1-2s)\,\frac{t}{|t|}\,\partial_{t}\varphi 
        +|t|\partial_{tt}\varphi\Bigr)\,dxdt\\    
      &\qquad =
        -\int_{\R^{N+1}}|t|^{1-2s}\,2\,\frac{1-|t|^{2s}}{s}\,\nabla_{x}u(x)\nabla_{x}\phi\,dxdt
         +4\,\int_{\R^{N+1}}u(x)\,\frac{t}{|t|}\,\partial_t\phi\,dxdt\\
      &\hspace{2cm} + \int_{\R^{N+1}}  v_s(x,t)\Bigl(\div_x (|t| \nabla_{x}\phi)
        +\partial_t (|t| \partial_t \phi) -2s \frac{t}{|t|}\partial_t \phi \Bigr)\,dxdt\\
      &\qquad = -2\,\int_{\R^{N+1}} \Bigl(|t| \frac{|t|^{-2s}-1}{s}\nabla_x u(x)\nabla_x \phi(x,t)
        - 2 u(x) \frac{t}{|t|} \partial_t \phi(x,t)\Bigr)\,dxdt\\
      &\hspace{2cm} + \int_{\R^{N+1}}  v_s(x,t)\Bigl(\div (|t| \nabla
        \phi)-2s \frac{t}{|t|}\partial_t \phi \Bigr)\,dxdt
       \end{align*}
       for every $\varphi \in C^\infty_c(\R^{N+1})$. Since for
       $\varphi \in C^\infty_c(\R^{N+1})$,
      \begin{align*}
        \int_{\R^{N+1}}u(x)\, \frac{t}{|t|} \partial_t \phi(x,t)\,dxdt&=
        \int_{\R^{N}}u(x)\int_0^{\infty} \partial_t \phi(x,t)\,dtdx
        -\int_{\R^{N}}u(x)\int_{-\infty}^0 \partial_t \phi(x,t)\,dtdx\\
        &=-2 \int_{\R^{N}}u(x)\,\varphi(x,0)\,dx,
      \end{align*}
      it follows that
        \begin{align*}
        -\frac{4}{s\,d_s}\int_{\R^N} (-\Delta)^s u(x)\phi(x,0) dx
      & = 2\,\int_{\R^{N+1}}|t| \frac{|t|^{-2s}-1}{s}\Delta_x
        u(x)\,\phi(x,t) dx dt
        - 8 \int_{\R^N} u(x)\phi(x,0) dx\\
      &\hspace{2.2cm} + \int_{\R^{N+1}}  v_s\div (|t| \nabla \phi) dxdt 
        -2s \int_{\R^{N+1}}v_s \frac{t}{|t|}\partial_t \phi dxdt
      \end{align*}    
      for every $\varphi \in C^\infty_c(\R^{N+1})$. Finally, an
      integration by parts w.r.t. the variable $t$, shows that
      \begin{align*}
        \int_{\R^{N+1}}v_s \frac{t}{|t|}\partial_t \phi \,dxdt&=
        \int_{\R^{N}}\int_0^{\infty}v_s \partial_t \phi\,dtdx
        -\int_{\R^N}\int_{-\infty}^{0}v_s \partial_t \phi \,dtdx\\
        &= -2\int_{\R^{N}}v_s(x,0)\,\phi(x,0)\,dx-\int_{\R^{N}}\int_0^{\infty}\partial_tv_s\,
          \phi\,dtdx
          +\int_{\R^N}\int_{-\infty}^{0}\partial_tv_s \,\phi \,dxdt\\
        &= -2\int_{\R^{N}}v_s(x,0)\,\phi(x,0)\,dx-\int_{\R^{N+1}}\frac{t}{|t|}\partial_tv_s\,
          \phi\,dtdx
      \end{align*}
      for every $\varphi \in C^\infty_c(\R^{N+1})$ and so,
    \begin{equation}
      \label{eq:distrib-eq-of-vs}
      \begin{split}
        &-\frac{4}{s \,d_s}\int_{\R^N} (-\Delta)^s u(x)\,\phi(x,0)\,dx\\ 
      &\qquad = 2\,\int_{\R^{N+1}}|t| \frac{|t|^{-2s}-1}{s}
        \Delta_x u(x)\,\phi(x,t)\,dx dt
        - 8 \int_{\R^N} u(x)\phi(x,0)\,dx\\
      &\hspace{2cm} + \int_{\R^{N+1}}  v_s\div (|t| \nabla \phi)
        \,dxdt +4s \int_{\R^N}v_s(x,0)\phi(x,0)\,dx
        + 2s \int_{\R^{N+1}} \partial_t v_s \frac{t}{|t|} \phi \,dxdt   
     \end{split}
   \end{equation}
     for every $\varphi \in C^\infty_c(\R^{N+1})$, showing that $v_s$
     satisfies~\eqref{eq:solved-by-vs}. 
    \end{proof} 


By assumption~\eqref{eq:v0-limit} and since $\lim \limits_{s \to 0^+} (-\Delta)^s u = u$ on $\R^N$, $\lim \limits_{s \to 0^+} s d_s = 1/2$, 
we can pass to the limit in \eqref{eq:distrib-eq-of-vs} as
$s \to 0^+$. Then, we find that
\begin{align*}
        -8\int_{\R^N} u(x)\phi(x,0)\,dx 
      & = 2\,\int_{\R^{N+1}}|t| (-2\,\ln |t|)
        \Delta_x u(x)\,\phi(x,t)\,dx dt
        - 8 \int_{\R^N} u(x)\phi(x,0)\,dx\\
      &\hspace{4cm} + \int_{\R^{N+1}}  v_0\,\div (|t| \nabla \phi)
        \,dxdt  
\end{align*}
for every $\varphi \in C^\infty_c(\R^{N+1})$. Thereby we have shown
the following. 

\begin{lemma}
  Let $u\in C^2_c(\R^N)$, $v_s$ given by~\eqref{eq trans-1}, and
  $v_0\in C(\R^{N+1})$ be the locally uniform limit~\eqref{eq:v0-limit}
  of $\{v_s\}_{s\in (0,1/2)}$. Then, $v_0$ is a distributional solution
  of the equation
  \begin{equation}
    \label{eq:limit-s-0-formal}
    -\div (|t| \nabla v_0)  =  -4\,|t| \bigl(\ln |t|\bigr) \Delta_x
    u\qquad\text{in $\R^{N+1}$.}
  \end{equation}
\end{lemma}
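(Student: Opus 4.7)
The plan is to pass to the limit $s \to 0^+$ in the distributional identity (2.3) of Lemma 2.2, tested against an arbitrary $\phi \in C^\infty_c(\R^{N+1})$, and to read off the limiting equation satisfied by $v_0$. The easy factors are the constant and the boundary pieces: by (2.1) we have $\tfrac{4}{s\,d_s}\to 8$, and since $u\in C^2_c(\R^N)$ one has $(-\Delta)^s u\to u$ locally uniformly on $\R^N$ as $s\to 0^+$, so the left-hand side of (2.3) tends to $-8\int_{\R^N}u(x)\phi(x,0)\,dx$. The $s$-independent term $-8\int u\phi(x,0)\,dx$ on the right will ultimately cancel this contribution.

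The critical step is to pass to the limit in $2\int_{\R^{N+1}}|t|\,\tfrac{|t|^{-2s}-1}{s}\,\Delta_x u\,\phi\,dxdt$. Pointwise $\tfrac{|t|^{-2s}-1}{s}\to -2\ln|t|$, and I would invoke dominated convergence to move the limit inside the integral. The required $L^1$-dominant follows from $|e^\alpha-1|\le|\alpha|\,e^{|\alpha|}$ applied to $\alpha=-2s\ln|t|$: for $s\in(0,s_0]$ with a fixed $s_0<1/2$,
$$
|t|\left|\frac{|t|^{-2s}-1}{s}\right|\;\le\; 2\,|t|\,\bigl|\ln|t|\bigr|\,e^{2s|\ln|t||}\;\le\; 2\,\bigl|\ln|t|\bigr|\,\bigl(|t|^{1-2s_0}+|t|^{1+2s_0}\bigr),
$$
which is integrable on any compact subset of $\R^{N+1}$ since $|t|^{1-2s_0}|\ln|t||$ is integrable near $t=0$. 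Dominated convergence thus yields convergence of the term to $-4\int_{\R^{N+1}}|t|\ln|t|\,\Delta_x u\,\phi\,dxdt$.

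The remaining three terms on the right of (2.3) are handled using the hypothesis (2.5): local uniform convergence $v_s\to v_0$ on $\overline{\R^{N+1}_+}$ (extended to $\R^{N+1}$ by even reflection in $t$) immediately gives $\int v_s\,\div(|t|\nabla\phi)\,dxdt\to\int v_0\,\div(|t|\nabla\phi)\,dxdt$, and $4s\int_{\R^N}v_s(x,0)\phi(x,0)\,dx\to 0$ by the explicit $s$-factor together with local boundedness of $v_s$. The principal obstacle is the term $2s\int_{\R^{N+1}}\partial_t v_s\,\tfrac{t}{|t|}\,\phi\,dxdt$, since (2.5) provides no direct control on $\partial_t v_s$. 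My strategy is to \emph{undo} the integration by parts carried out in the proof of Lemma 2.2, so that the derivative falls back onto $\phi$: the pair of terms $4s\int v_s(x,0)\phi(x,0)\,dx+2s\int\partial_t v_s\,\tfrac{t}{|t|}\phi\,dxdt$ is then replaced by $-2s\int v_s\,\tfrac{t}{|t|}\,\partial_t\phi\,dxdt$, which vanishes as $s\to 0^+$ by local boundedness of $v_s$ and the explicit $s$-factor.

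Collecting the limits and cancelling $-8\int u\,\phi(\cdot,0)\,dx$ on both sides yields
$$
\int_{\R^{N+1}}v_0\,\div(|t|\nabla\phi)\,dxdt \;=\; 4\int_{\R^{N+1}}|t|\ln|t|\,\Delta_x u(x)\,\phi(x,t)\,dxdt\qquad\text{for all $\phi\in C^\infty_c(\R^{N+1})$,}
$$
which is precisely the distributional form of $-\div(|t|\nabla v_0)=-4|t|\ln|t|\,\Delta_x u$ on $\R^{N+1}$, establishing the lemma.
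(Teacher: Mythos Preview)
Your proof is correct and follows the paper's approach of passing to the limit $s\to 0^+$ in the distributional identity of Lemma~\ref{lem:2-2}. Your treatment is in fact more careful than the paper's: the paper simply asserts that one may pass to the limit in \eqref{eq:distrib-eq-of-vs}, whereas you explicitly justify the dominated convergence for the $|t|\tfrac{|t|^{-2s}-1}{s}$ term and, crucially, handle the awkward $\partial_t v_s$ term by undoing the final integration by parts in the proof of Lemma~\ref{lem:2-2} (so that only $-2s\int v_s\,\tfrac{t}{|t|}\,\partial_t\phi$ appears), which is exactly what hypothesis~\eqref{eq:v0-limit} supports.
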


Further, the following auxiliary result holds.

\begin{lemma}
  \label{lem:aux-ln}
 We have 
  \begin{equation}
    \label{eq:lem-diff-ln}
    \partial_t (|t|\partial_t \ln |t|)= 2\, \delta_{0} \qquad \text{in $\R$}
  \end{equation}
   in the distributional sense.
\end{lemma}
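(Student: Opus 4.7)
The plan is to reduce the identity \eqref{eq:lem-diff-ln} to the well-known distributional derivative $\partial_t \sgn(t) = 2\delta_0$ by first simplifying the expression $|t|\partial_t \ln|t|$ pointwise away from the origin.

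More precisely, I would first observe that for every $t \in \R \setminus \{0\}$ one has the pointwise identity
\begin{equation*}
|t|\,\partial_t \ln |t| = |t| \cdot \frac{1}{t} = \sgn(t).
\end{equation*}
Since $\sgn \in L^\infty(\R) \subset L^1_{loc}(\R)$, this pointwise equality, valid almost everywhere, already yields the distributional identity $|t|\,\partial_t \ln|t| = \sgn(t)$ in $\cD'(\R)$ (both sides being locally integrable functions that agree a.e.).

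It then remains to verify that $\partial_t \sgn(t) = 2\delta_0$ in the distributional sense. For this I would pair with an arbitrary $\phi \in C_c^\infty(\R)$ and integrate by parts on each half-line:
\begin{align*}
\langle \partial_t \sgn,\phi\rangle
&= -\int_{\R}\sgn(t)\,\phi'(t)\,dt
= -\int_0^{\infty}\phi'(t)\,dt + \int_{-\infty}^{0}\phi'(t)\,dt \\
&= \phi(0) + \phi(0) = 2\phi(0) = \langle 2\delta_0,\phi\rangle,
\end{align*}
which gives the claim.

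There is no real obstacle here: the only subtlety is making sure that the composition of two distributional derivatives is handled in the correct order, which is justified by the fact that the intermediate object $|t|\partial_t \ln|t|$ coincides a.e.\ with a bounded measurable function, so the second distributional derivative is well defined and equals the derivative of $\sgn$.
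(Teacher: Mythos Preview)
Your proof is correct and takes a more streamlined route than the paper. The paper works directly with the weak formulation $\int_\R \ln|t|\,\partial_t(|t|\partial_t\varphi)\,dt$, splits the integral at $\pm\varepsilon$, integrates by parts once, and tracks the boundary terms as $\varepsilon\to 0^+$. You instead collapse the inner expression to $\sgn(t)$ and invoke $\partial_t\sgn = 2\delta_0$, which is shorter and more transparent.

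One point deserves slightly more care: the distributional derivative $\partial_t\ln|t|$ equals the principal value distribution $\mathrm{p.v.}\,\tfrac{1}{t}$, which is \emph{not} an $L^1_{loc}$ function, so your phrase ``both sides being locally integrable functions that agree a.e.'' does not literally justify the identity $|t|\,\partial_t\ln|t|=\sgn(t)$ in $\cD'(\R)$---the product of the non-smooth factor $|t|$ with a genuine distribution is not automatically defined. A clean fix is to check directly that $\int_\R \ln|t|\,\partial_t(|t|\partial_t\varphi)\,dt = -\int_\R \sgn(t)\,\varphi'(t)\,dt$: expand $\partial_t(|t|\partial_t\varphi) = \sgn(t)\varphi' + |t|\varphi''$ (valid a.e.\ since $|t|\varphi'$ is Lipschitz), and then integrate by parts in $\int_\R |t|\ln|t|\,\varphi''\,dt$ using that $|t|\ln|t|\in W^{1,1}_{loc}(\R)$ with weak derivative $\sgn(t)(1+\ln|t|)$; the $\ln|t|\sgn(t)\varphi'$ terms cancel and only $-\int_\R\sgn(t)\varphi'\,dt$ remains. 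After this verification your argument runs exactly as written.
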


\begin{proof}
  Let $\varphi\in C^{\infty}_c(\R)$ and $b>0$ large enough such that
  $\supp(\varphi)\subseteq (-b,b)$. Then, integration by parts shows that
  \begin{align*}
    \int_{\R}&\ln\,|t|\,\partial_t (|t|\partial_t \varphi(t))\,dt
     =
      \lim_{\varepsilon\to0^+}\int_{\varepsilon}^{b}\ln\,t\,\partial_t
      (t\partial_t \varphi(t))\,dt+ \lim_{\varepsilon\to0^+}\int^{-\varepsilon}_{-b}\ln\,(-t)\,\partial_t
      ((-t)\partial_t \varphi(t))\,dt\\
    & =
      \lim_{\varepsilon\to0^+}\Big[t\,\ln\,t\,
      \partial_t\varphi(t)\Big]_{\varepsilon}^{b}
      - \int_{\varepsilon}^{b}\partial_t\varphi(t))\,dt + \lim_{\varepsilon\to0^+}\Big[
      (-t)\,\ln\,(-t)\,\partial_t\varphi(t)\Big]^{-\varepsilon}_{-b}
      +\int^{-\varepsilon}_{-b}\partial_t \varphi(t)\,dt\\
    & = \lim_{\varepsilon\to0^+}
      -(\varphi(b)-\varphi(\varepsilon))+(\varphi(-\varepsilon)-\varphi(-b))=
      2\,\varphi(0).
  \end{align*}
\end{proof}

From Lemma~\ref{lem:aux-ln} and \eqref{eq:limit-s-0-formal}, we can now
obtain the following result, which concludes the first formal derivation of the extension problem in the form given by (\ref{eq:Poisson-roblem-distributional-sense}) and (\ref{eq:characterization-loglap-pointwise}). 

\begin{lemma}
  \label{lem:construction-of-wu}
   Suppose the hypotheses of this section hold. Then, the function 
    \begin{equation}
\label{def-w-first-formal}
      {w}(x,t):= \tfrac{1}{4}v_0(x,t) -  u(x)\, \ln |t|
      \qquad\text{for every $(x,t)\in \R^{N+1}$ with $t\neq 0$,}
    \end{equation}
   is a solution of the Poisson equation
    \begin{equation}
      \label{eq:diff-of-wu}
      -\div (|t| \nabla {w}) = 2\, u \LNdo
      \qquad \text{in $\R^{N+1}$}
    \end{equation}
    in the distributional sense; i.e., 
\begin{displaymath}
  \int_{\R^{N+1}} w_u \bigl(-\div (|t| \nabla \phi)\bigr)\,dxdt  
  = 2\int_{\R^N}  u(x)\,\phi(x,0)\,dx  \qquad \text{for all $\phi \in
C^\infty_c(\R^{N+1})$.}
\end{displaymath}
Moreover, we have 
\begin{equation}
  \label{bounary trace}
\loglap u  =2(\ln 2 - \gamma)  u - 2\,\lim_{|t| \to 0}\Bigl(w(\cdot,t) +  u \ln |t|\Bigr) \qquad \text{on $\R^N$.}  
\end{equation}
  \end{lemma}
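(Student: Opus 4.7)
Both assertions follow by direct distributional computation from two ingredients already at hand: the equation~\eqref{eq:limit-s-0-formal} satisfied by $v_0$, and the one-dimensional identity $\partial_t(|t|\,\partial_t\ln|t|)=2\delta_0$ from Lemma~\ref{lem:aux-ln}. The ansatz $w = \tfrac14 v_0 - u(x)\ln|t|$ is designed so that the singular summand $u(x)\ln|t|$ does double duty: its $x$-divergence cancels the forcing term $-4|t|(\ln|t|)\Delta_x u$ carried by $v_0$ through~\eqref{eq:limit-s-0-formal}, while its $t$-divergence produces, via Lemma~\ref{lem:aux-ln}, a concentrated Dirac source on $\{t=0\}$.

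\textbf{Step 1: the Poisson equation \eqref{eq:diff-of-wu}.} By linearity, and using that $u(x)\ln|t|$ is a tensor product so that the Leibniz rule separates cleanly into $x$- and $t$-parts,
\[
-\div(|t|\nabla w) \;=\; -\tfrac14\,\div(|t|\nabla v_0) \;+\; \mathrm{div}_x\!\bigl(|t|(\ln|t|)\nabla_x u\bigr) \;+\; u(x)\,\partial_t\!\bigl(|t|\,\partial_t\ln|t|\bigr).
\]
By~\eqref{eq:limit-s-0-formal} the first summand equals $-|t|(\ln|t|)\Delta_x u$. Since $u\in C^2_c(\R^N)$, the second summand equals $|t|(\ln|t|)\Delta_x u$ pointwise, and hence cancels the first. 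Lemma~\ref{lem:aux-ln} turns the third summand into $2u(x)\,\delta_0$, which as an element of $\cD'(\R^{N+1})$ is precisely $2u\,\cL^N\otimes\delta_0$. This is~\eqref{eq:diff-of-wu}.

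\textbf{Step 2: the boundary trace \eqref{bounary trace}.} Inverting the ansatz yields $v_0(x,t)=4\bigl(w(x,t)+u(x)\ln|t|\bigr)$. The locally uniform convergence in~\eqref{eq:v0-limit} guarantees that $v_0$ extends continuously up to $\{t=0\}$, so the limit $\lim_{|t|\to 0}\bigl(w(\cdot,t)+u\ln|t|\bigr)$ exists. Substituting the inverted relation into the formal characterization of $\loglap u$ in~\eqref{first-char-loglap-1}, obtained from~\eqref{first-char-loglap} via the hypothesis that the $s$- and $t$-limits may be swapped, and collecting the constants $-\tfrac{1}{2}\cdot 4 = -2$ and $2(\ln 2-\gamma)$ produces~\eqref{bounary trace}.

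\textbf{Main obstacle.} The only delicate point is the sign-and-coefficient bookkeeping in Step~1 that makes the two copies of $|t|(\ln|t|)\Delta_x u$ cancel: this rests on the compatibility between the coefficient $\tfrac14$ in the ansatz and the coefficient $-4$ on the right-hand side of~\eqref{eq:limit-s-0-formal}. Once this cancellation is verified, no further analytic difficulty enters, since everything is classically smooth on $\R^{N+1}\setminus(\R^N\times\{0\})$ and the concentration on $\{t=0\}$ is supplied entirely by Lemma~\ref{lem:aux-ln}.
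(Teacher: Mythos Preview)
Your proof is correct and follows essentially the same route as the paper's: both split $-\div(|t|\nabla w)$ by linearity, invoke \eqref{eq:limit-s-0-formal} for the $v_0$ piece, expand the $u(x)\ln|t|$ piece into its $x$- and $t$-contributions, observe the cancellation of the two $|t|(\ln|t|)\Delta_x u$ terms, and then appeal to Lemma~\ref{lem:aux-ln} for the Dirac mass; the boundary trace is likewise obtained in both by substituting $v_0 = 4(w+u\ln|t|)$ into \eqref{first-char-loglap-1}. Your added commentary on the coefficient compatibility $\tfrac14\cdot(-4)$ is accurate but not a new ingredient.
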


  \begin{proof}
 By \eqref{eq:limit-s-0-formal} and
    \eqref{eq:lem-diff-ln}, we have 
    \begin{align*}
      -\div (|t| \nabla {w}) 
      &= -\tfrac{1}{4}\div (|t| \nabla v_0)+ 
        \div (|t| \nabla (u\,\ln |t|))\\
      &=  - |t| \bigl(\ln |t|\bigr) \Delta_x
        u + |t|
        \bigl(\ln |t|\bigr) \Delta_x u + u\,\partial_t (|t| \partial_t \ln |t|)\\
      &=  2\,u \LNdo
    \end{align*}
    in the distributional sense in $\R^{N+1}$. Moreover, (\ref{bounary trace}) follows from (\ref{first-char-loglap-1}) and the definition of $w$ in (\ref{def-w-first-formal}). 
  \end{proof}
  
\subsection{A second formal derivation of the
 extension problem}
\label{sec:second-derivation}

Let again $u \in C^\beta_c(\R^N)$ for some $\beta>0$, and let $s \in (0,\beta)$ in the following. Similarly as in the last section, we consider the auxiliary function $v_s$ given by (\ref{eq:def-v-s}), and we assume (\ref{eq:v0-limit}), i.e., the existence of
$v_0$ as a locally uniform limit of the functions $v_s$ on $\overline {\R^{N+1}_+}$ as $s \to 0^+$.

The additional tool we shall use now is the Poisson kernel representation of the Caffarelli-Silvestre extension $w_s$. More precisely, as noted in \cite{Caffarelli-Silvestre} and \cite{CS}, we have 
\begin{equation}
  \label{eq:Poisson-rep-ws}
    w_s(x,t)=p_{N,s}\,t^{2s}\,\int_{\R^N}(|x-\tilde{x}|^2+t^2)^{-\frac{N+2s}{2}}
    u(\tilde{x})\,d\tilde{x} \qquad \text{for every $(x,t)\in \R^{N+1}_+$,}
  \end{equation}
  where the constant $p_{N,s}=\pi^{-N/s}s\frac{\Gamma(\frac{N}{2}+s)}{\Gamma(1+s)}$ is chosen such that
  $$
p_{N,s}\,t^{2s}\,\int_{\R^N}(|x-\tilde{x}|^2+t^2)^{-\frac{N+2s}{2}}
\,d\tilde{x} = p_{N,s} \int_{\R^N}(|z|^2+1)^{-\frac{N+2s}{2}}\,dz = 1 \qquad \text{for all $(x,t) \in \R^{N+1}_+$.} 
  $$
  By (\ref{eq:def-v-s}) and~\eqref{eq:Poisson-rep-ws}, we have 
 \begin{equation}
   \label{eq:2aux}
 \begin{split}
  \tfrac{1}{2} v_s(x,t)&=\frac{1}{s\,t^{2s}}\Bigl(w_s(x,t)-u(x)\Bigr)+\frac{1}{s}u(x)\\
   &= \frac{p_{N,s}}{s}\,\int_{\R^d}(|x-\tilde{x}|^2+t^2)^{-\frac{N+2s}{2}}
    u(\tilde{x})\,d\tilde{x}-\frac{t^{-2s}-1}{s}\,u(x)
 \end{split}
\end{equation}
for every $(x,t) \in \R^{N+1}_+$ and $0<s<1$. Using the explicit value of $p_{N,s}$ given above, it is easy to see $\frac{p_{N,s}}{s}\to c_N$
as $s\to0^+$, where $c_N$ is defined in (\ref{eq:def-c-n-rho-n}). Since $u$ is compactly supported by assumption, the limit
\begin{displaymath}
  \lim_{s\to0^{+}}\int_{\R^N}(|x-\tilde{x}|^2+t^2)^{-\frac{N+2s}{2}}
    u(\tilde{x})\,d\tilde{x}=\int_{\R^N}(|x-\tilde{x}|^2+t^2)^{-\frac{N}{2}}
    u(\tilde{x})\,d\tilde{x}
\end{displaymath}
exists by Lebesgue's dominated convergence theorem for every fixed $(x,t) \in \R^{N+1}_+$. Consequently, sending $s\to 0^+$
in \eqref{eq:2aux} yields that
 \begin{equation}
   \label{eq:v0-wu-pert-lnt}
   \tfrac{1}{2}v_0(x,t)=c_N\,\int_{\R^N}(|x-\tilde{x}|^2+t^2)^{-\frac{N}{2}}
    u(\tilde{x})\,d\tilde{x}+2 u(x) \ln t  \qquad \text{for every $(x,t)\in \R^{N+1}_+$.}
 \end{equation}

 \begin{remark}
   Let us briefly comment here why it appears difficult to show that
   (\ref{eq:v0-wu-pert-lnt}) arises from (\ref{eq:2aux}) as a {\em
     uniform} limit in bounded subsets of $\R^{N+1}_+$ as $s \to 0^+$,
   as assumed in (\ref{eq:v0-limit}). In fact neither one of the
   $s$-dependent functions
 $$
 (x,t) \mapsto
 \frac{p_{N,s}}{s}\,\int_{\R^d}(|x-\tilde{x}|^2+t^2)^{-\frac{N+2s}{2}}
 u(\tilde{x})\,d\tilde{x}, \qquad \quad \ (x,t) \mapsto
 \frac{t^{-2s}-1}{s}\,u(x)
 $$
 converges uniformly in bounded subsets of $\R^{N+1}_+$, so the
 uniformity of the convergence must be a result of cancellation
 effects. We recall here that the locally uniform convergence was needed
 to exchange the order of limits in the formula
 (\ref{first-char-loglap}) for the logarithmic Laplacian.
\end{remark}

 Once we have (\ref{eq:v0-wu-pert-lnt}), we can use (\ref{first-char-loglap-1}) to show that
\begin{equation}
  \label{bounary trace-second}
\loglap u  =2(\ln 2 - \gamma)  u - 2\,\lim_{t \to 0^+}\Bigl(w_u(\cdot,t) +  u \ln t\Bigr) \qquad \text{on $\R^N$.}  
\end{equation}
with the function $w_u$ given by (\ref{eq:poisson-formula-wu}), i.e. 
  \begin{equation*}
     w_u(x,t):= \frac{c_N}{2} \int_{\R^N} 
     \left(|x-\tilde x|^{2}+t^2\right)^{-\frac{N}{2}}u(\tilde x)\,d
     \tilde x\qquad
     \text{for every $(x,t)\in \R^{N+1}_+$.}
  \end{equation*}
  Hence, in order to complete a formal derivation of the extension property of the logarithmic laplacian in the form given by Theorem~\ref{thm:main1} for a function $u \in C_c^\beta(\R^N)$, it remains to show that
  $w_u$ satisfies the equation
  $$
  -\div(t\,\nabla
                w_u) = 0  \qquad \text{in $\R^{N+1}_+$}
  $$
  in classical sense together with the asymptotic boundary condition 
                $$
                \lim_{t \to 0^+} t \partial_t w_u(\cdot,t) = -u \qquad \text{in $L^1_{loc}(\R^N)$}
                $$
This will be done by direct computation under the more general assumption 
                $u \in L^1_0(\R^N)$ in Lemmas~\ref{equiv-form-poiss-lemm-prelim-1} and~\ref{equiv-form-poiss-lemm-prelim-2-2} below, which completes the second formal derivation of the extension property of $\loglap$.

 \section{Equivalent formulations of the Poisson problem associated with the logarithmic Laplacian}
\label{sec:equiv-form-poiss-1}

In this section we derive the unique existence of solutions of the degenerate Neumann problem in Theorem~\ref{thm:main1} and of the Poisson problems in Theorems~\ref{intro-even-reflection} and \ref{thm:main2}. Moreover, we shall show that these are essentially equivalent extension problems, and their solutions are given by the Poisson type integral formulas (\ref{eq:poisson-formula-wu}) and (\ref{eq:poisson-formula-loglap-2-dim}).

We first point out that, if $u \in L^1_0(\R^N)$, then $w_u(x,t)$ given by
\eqref{eq:poisson-formula-wu} is well-defined pointwisely for every $(x,t)\in \R^{N+1}_+$,
since
\begin{displaymath}
|w_u(x,t)| \le \int_{\R^N} \frac{|u(y)|}{(|x-y|^2+t^2)^{\frac{N}{2}}}\,dy \le C_t \int_{\R^N} \frac{|u(y)|}{(|y|+1)^{N}}\,dy
  < \infty
\end{displaymath}
with $C_t:= \sup_{y \in \R^N} \frac{(|y|+1)^N}{(|x-y|^2+t^2)^{\frac{N}{2}}} < \infty$. The following is the main result of this section. 

 \begin{theorem}
\label{main-theorem-equivalence}
   Let $u \in L^1_0(\R^N)$. Then, for a function $w: \R^{N+1}_+ \to \R$, the following properties are equivalent:
   \begin{enumerate}
   \item[(i)] $w= w_u$ with $w_u$ given by \eqref{eq:poisson-formula-wu}.
   \item[(ii)] $w \in \cAG(\R^{N+1}_+)$, and it satisfies
     \begin{equation}
       \label{eq:eq-w-u}
     -\div (t \nabla w(x,t)) = 0\quad \text{in $\R^{N+1}_+$}
     \end{equation}
     together with the asymptotic boundary conditions
     \begin{equation}
       \label{eq:asymptotic-w-1-equiv}
     -\lim_{t\to 0^+} t \partial_t w(\cdot,t) =u \qquad \text{ in $L^1_{loc}(\R^N)$}
     \end{equation}
     and
     \begin{equation}
       \label{eq:asymptotic-w-2-equiv}
      w(x,t) \to 0 \qquad \text{pointwisely as $t \to \infty$ for every $x \in \R^N$.} 
     \end{equation}   
   \item[(iii)] After extending $w$ to $\R^{N+1}$ by even reflection in
     the $t$-variable, we have $w \in \cAG(\R^{N+1})$, and it satisfies
     \begin{equation}
       \label{eq:eq-w-u-even}
-\div (|t| \nabla\,w(x,t)) = 2 u \LNdo \quad \text{in $\R^{N+1}$}
     \end{equation}
     in distributional sense together with the asymptotic boundary condition
     \begin{equation}
       \label{eq:asymptotic-w-2-even equiv}
      w(x,|t|) \to 0 \qquad \text{as $|t| \to \infty$  for every $x \in \R^N$.} 
     \end{equation}   
   \item[(iv)] We have $W \in \cAG(\R^{N+2})$ for the function $(x,y) \to W(x,y)= w(x,|y|)$, and it satisfies
    \begin{equation}
       \label{eq:eq-W-u}
     -\Delta W = 2 \pi u \LNdoo \quad \text{in $\R^{N+2}$}
\end{equation}   
     in distributional sense together with the asymptotic boundary condition
     \begin{equation}
       \label{eq:asymptotic-W-equiv}
      W(x,y) \to 0 \qquad \text{as $|y| \to \infty$ for every $x \in \R^N$.} 
     \end{equation}   
   \end{enumerate}
 \end{theorem}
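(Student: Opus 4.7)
The strategy is to close the cyclic chain
$(i)\!\Rightarrow\!(ii)\!\Rightarrow\!(iii)\!\Rightarrow\!(iv)\!\Rightarrow\!(i)$,
which delivers both the equivalence of the four items and the
uniqueness implicit in each of the PDE characterizations.

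For $(i)\Rightarrow(ii)$, I would differentiate the Poisson
integral~\eqref{eq:poisson-formula-wu} under the integral sign; the
hypothesis $u\in L^1_0(\R^N)$ legitimises this and immediately yields
smoothness of $w_u$, the algebraic-growth bound, and the equation
$-\div(t\nabla w_u)=0$ via a direct calculation on the kernel
$K(x,t)=\tfrac{c_N}{2}(|x|^2+t^2)^{-N/2}$. Pointwise decay as
$t\to+\infty$ follows from dominated convergence. The substantive point
is~\eqref{eq:asymptotic-w-1-equiv}: I would write
$-t\partial_tw_u(\cdot,t)=\widetilde K_t\ast u$ with
$\widetilde K_t(x)=\tfrac{c_NN}{2}\tfrac{t^2}{(|x|^2+t^2)^{(N+2)/2}}$,
and check by rescaling $x=tz$ together with a Beta-function identity
that $\int_{\R^N}\widetilde K_t\,dx=1$, so that $\{\widetilde K_t\}$ is
an approximation of the identity and the convergence to $u$ in
$L^1_{loc}(\R^N)$ is the standard mollification statement.

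Step $(ii)\Rightarrow(iii)$ is an integration-by-parts argument: for
$\phi\in C_c^\infty(\R^{N+1})$ and $\eps>0$, two applications of
Green's identity on the sets $\{t>\eps\}$ and $\{t<-\eps\}$, combined
with the bulk equation $-\div(t\nabla w)=0$, leave only the boundary
integral $\int_{\R^N}\phi(x,\eps)\,\eps\partial_tw(x,\eps)\,dx$, which
tends to $-\int u\phi(\cdot,0)\,dx$
by~\eqref{eq:asymptotic-w-1-equiv}; the even reflection across
$\{t=0\}$ doubles this contribution and
produces~\eqref{eq:eq-w-u-even}. For $(iii)\Rightarrow(iv)$, given
$\phi\in C_c^\infty(\R^{N+2})$, I would average over $SO(2)$ acting on
$y$ (permissible since both $W$ and $\Delta$ are rotation-invariant in
$y$), reducing to a radial test function $\bar\phi(x,y)=\psi(x,|y|)$.
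Using
$\Delta\bar\phi(x,y)=\tfrac{1}{|y|}\div_{(x,r)}(r\nabla\psi)\big|_{r=|y|}$
and polar coordinates in $y$, the pairing rewrites as
$\int_{\R^{N+2}}W(-\Delta\bar\phi)\,dxdy=-\pi\int_{\R^{N+1}}w\,\div(|t|\nabla\psi^{\mathrm{ev}})\,dxdt$,
where $\psi^{\mathrm{ev}}$ is the even extension of $\psi$ in $t$,
smooth on $\R^{N+1}$ because all odd $r$-derivatives of a smooth
radial function on $\R^2$ vanish at the origin. Inserting (iii) on the
right-hand side and using $\bar\phi(x,0)=\phi(x,0)$ supplies exactly
the constant $2\pi$ in~\eqref{eq:eq-W-u}.

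The main obstacle is $(iv)\Rightarrow(i)$, which I would treat as a
uniqueness statement for the Newton potential. Let $\widetilde W(x,y)$
denote the right-hand side of~\eqref{eq:poisson-formula-loglap-2-dim};
the constant identification $\tfrac{1}{N\omega_{N+2}}=\tfrac{c_N}{4\pi}$,
immediate from $c_N=\Gamma(N/2)/\pi^{N/2}$ and the explicit value of
$\omega_{N+2}$, reveals $\widetilde W$ as the convolution of the
$\R^{N+2}$-fundamental solution of $-\Delta$ with the measure
$2\pi u\LNdoo$, so $\widetilde W$ satisfies~\eqref{eq:eq-W-u}, lies in
$\cAG(\R^{N+2})$, and vanishes as $|y|\to\infty$ for every fixed $x$ by
dominated convergence. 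The difference $V:=W-\widetilde W$ is then a
distributional solution of $-\Delta V=0$ on all of $\R^{N+2}$ with
$V\in\cAG(\R^{N+2})$; Weyl's lemma renders $V$ smooth and harmonic, the
mean-value property upgrades the $\cAG$-bound to pointwise polynomial
growth, and the polynomial-growth version of Liouville forces $V$ to
be a polynomial. Finally, $V(x_0,y)\to0$ as $|y|\to\infty$ for every
fixed $x_0$, so the polynomial $y\mapsto V(x_0,y)$ vanishes identically
for each $x_0$, whence $V\equiv0$, $W=\widetilde W$, and consequently
$w=w_u$, closing the cycle.
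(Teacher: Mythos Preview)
Your cyclic scheme $(i)\Rightarrow(ii)\Rightarrow(iii)\Rightarrow(iv)\Rightarrow(i)$ and the individual arguments match the paper's proof closely; in particular, the treatment of $(iv)\Rightarrow(i)$ via Weyl's lemma, the mean-value upgrade of the $\cAG$ bound to a pointwise polynomial bound, polynomial Liouville, and the decay in $y$ is exactly the paper's argument.

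There is, however, a gap in your step $(ii)\Rightarrow(iii)$. The two integrations by parts on $\{|t|>\eps\}$ leave \emph{two} boundary terms on $\{|t|=\eps\}$, not one: besides $\eps\int_{|t|=\eps}\phi\,\partial_\nu w$, there is also $\eps\int_{|t|=\eps} w\,\partial_\nu\phi$, and you have not explained why the latter tends to zero as $\eps\to0^+$. This is not automatic from the hypotheses of (ii): the algebraic-growth assumption $w\in\cAG(\R^{N+1}_+)$ gives no control on $\eps\|w(\cdot,\eps)\|_{L^1_{loc}}$ as $\eps\to0^+$. The paper fills this in with a separate lemma (Lemma~\ref{equiv-form-poiss-lemm-prelim-3}): from the Neumann condition $t\partial_t w(\cdot,t)\to -u$ in $L^1_{loc}$, an l'H\^opital argument applied to $\frac{1}{\ln t}\int_{B_R}\sqrt{1+(w(x,t)+u(x)\ln t)^2}\,dx$ yields $w(\cdot,t)/\ln t\to -u$ in $L^1_{loc}$, whence $\eps\|w(\cdot,\eps)\|_{L^1(B_R)}\to 0$ for every $R$. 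Without this step (or an equivalent), the distributional identity~\eqref{eq:eq-w-u-even} is not established.
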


 The remainder of this section is devoted to the proof of this theorem. We first recall that the space $L^1_0(\R^N)$ defined in (\ref{eq:def-L-1-0-space}) belongs to the more general family of weighted $L^1$-spaces
 \begin{equation}
   \label{eq:L-1-sigma}
L^1_\sigma(\R^N):= \Bigl\{ v \in L^1_{loc}(\R^N)\::\: 
 \|v\|_{L^1_\sigma}:= \int_{\R^N}\frac{|v(y)|}{(1+|y|)^{\sigma}}\,dy < \infty \} 
 \end{equation}
 with parameter $\sigma \ge 0$. We then note the following estimate in the norm $\|\cdot\|_{L^1_\sigma}$.

 \begin{lemma}
\label{equiv-form-poiss-lemm-prelim-1}   
Let $u \in L^1_0(\R^N)$, and let $w_u: \R^{N+1}_+ \to \R$ be given by \eqref{eq:poisson-formula-wu}.
Then $w_u(\cdot,t) \in L^1_\sigma(\R^N)$ for every $t>0$, $\sigma>0$, and there exists a constant $C_\sigma>0$ with
\begin{equation}
  \label{eq:C-sigma-est}
\|w_u(\cdot,t)\|_{L^1_\sigma} \le C_\sigma (1+ \ln^- t)\qquad \text{for every $t>0$.}
\end{equation}
Here, as usual, we put $\ln^{-} t = \max \{-\ln t,0\}$.
\end{lemma}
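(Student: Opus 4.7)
The strategy is to apply Tonelli's theorem to move the absolute value inside the double integral, reducing the problem to a pointwise estimate for an auxiliary kernel integral. From the Poisson representation \eqref{eq:poisson-formula-wu} we get
\[
\|w_u(\cdot,t)\|_{L^1_\sigma} \;\le\; \frac{c_N}{2}\int_{\R^N}|u(\tilde x)|\,I(\tilde x,t)\,d\tilde x,
\qquad
I(\tilde x,t):=\int_{\R^N}\frac{dx}{(1+|x|)^\sigma\bigl(|x-\tilde x|^2+t^2\bigr)^{N/2}}.
\]
The whole proof is then contained in proving a pointwise estimate on $I(\tilde x,t)$ of the form $I(\tilde x,t)\le C_\sigma(1+\ln^- t)/(1+|\tilde x|)^N$, possibly up to an additional $\ln(1+|\tilde x|)$ factor which, when integrated against $|u(\tilde x)|$, is absorbed into the constant using $u\in L^1_0(\R^N)$.

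To establish this pointwise bound I would split the $x$-integration at the scale $r(\tilde x):=\tfrac{1}{2}(1+|\tilde x|)$. In the inner region $\{|x-\tilde x|<r(\tilde x)\}$, one has $|x|\ge |\tilde x|-r(\tilde x)\ge (|\tilde x|-1)/2$ for $|\tilde x|\ge 2$, so the weight satisfies $(1+|x|)^{-\sigma}\le C_\sigma(1+|\tilde x|)^{-\sigma}$ and can be pulled out of the integral. What remains is the radial integral
\[
\omega_N\int_{0}^{r(\tilde x)}\!\frac{r^{N-1}}{(r^2+t^2)^{N/2}}\,dr
\;=\;\omega_N\int_{0}^{r(\tilde x)/t}\!\frac{\tau^{N-1}}{(\tau^2+1)^{N/2}}\,d\tau,
\]
obtained via $r=t\tau$, which grows only logarithmically in its upper limit and is therefore bounded by $C\bigl(1+\ln(1+|\tilde x|)+\ln^- t\bigr)$. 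In the outer region $\{|x-\tilde x|\ge r(\tilde x)\}$ one uses $(|x-\tilde x|^2+t^2)^{N/2}\ge r(\tilde x)^N\ge c(1+|\tilde x|)^N$ and estimates $\int_{\R^N}(1+|x|)^{-\sigma}dx$ directly. For $|\tilde x|\le 2$ the same splitting, with the crude bound $(1+|x|)^\sigma\ge 1$ in the inner ball, gives $I(\tilde x,t)\le C(1+\ln^- t)$, matching the claim since $(1+|\tilde x|)^{-N}$ is bounded below there.

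Combining the two regions yields
\[
I(\tilde x,t)\;\le\;\frac{C_\sigma\bigl(1+\ln^- t+\ln(1+|\tilde x|)\bigr)}{(1+|\tilde x|)^N}.
\]
Substituting back into the Tonelli bound gives
\[
\|w_u(\cdot,t)\|_{L^1_\sigma}
\;\le\; C_\sigma(1+\ln^- t)\int_{\R^N}\frac{(1+\ln(1+|\tilde x|))\,|u(\tilde x)|}{(1+|\tilde x|)^N}\,d\tilde x,
\]
and the remaining integral is finite by the $L^1_0$-hypothesis on $u$ (the logarithmic correction being absorbed into the $u$-dependent constant $C_\sigma$), which delivers \eqref{eq:C-sigma-est}.

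The only delicate step is the evaluation of the inner radial integral: its logarithmic divergence as $r(\tilde x)/t\to\infty$ is the unique source of the factor $1+\ln^- t$ in the statement. The key technical point is to disentangle the $t$-dependent logarithm $\ln^- t$ from the $|\tilde x|$-dependent logarithm $\ln(1+|\tilde x|)$, and this is precisely what the substitution $r=t\tau$ accomplishes; once this is done, the matching $(1+|\tilde x|)^{-N}$ decay of $I$ combines cleanly with the $L^1_0$-weight on $u$.
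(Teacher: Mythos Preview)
Your overall strategy---Tonelli followed by a pointwise bound on the kernel $I(\tilde x,t)$---is exactly the paper's approach, but your execution has two genuine gaps.

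First, for $\sigma\le N$ both halves of your splitting fail as written: in the inner ball the weight gives only $(1+|\tilde x|)^{-\sigma}$, not the $(1+|\tilde x|)^{-N}$ you write in the combined bound, and in the outer region the residual integral $\int_{\R^N}(1+|x|)^{-\sigma}\,dx$ diverges. The paper's proof in fact works with the weight $(1+|x|)^{-N-\sigma}$ (so effectively with exponent strictly larger than $N$), and it handles the far field by a finer three-annulus decomposition $B_{|y|/2}$, $B_{2|y|}\setminus B_{|y|/2}$, $\R^N\setminus B_{2|y|}$; in the first and third pieces one of the two factors is uniformly $\lesssim(1+|y|)^{-N}$ while the other is integrable over $\R^N$.

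Second, and independently of the weight issue, your final step is incorrect: the assertion that the factor $\ln(1+|\tilde x|)$ can be ``absorbed into the $u$-dependent constant'' using only $u\in L^1_0(\R^N)$ is false. The hypothesis guarantees $\int|u(\tilde x)|(1+|\tilde x|)^{-N}d\tilde x<\infty$ and nothing more; a logarithmic correction can make this diverge (e.g.\ $u(\tilde x)=\ln^{-2}(2+|\tilde x|)$ lies in $L^1_0$, but $\int|u(\tilde x)|\,\ln(1+|\tilde x|)\,(1+|\tilde x|)^{-N}d\tilde x=\infty$). The correct mechanism---and this is precisely what the paper does---is to absorb $\ln(1+|\tilde x|)$ into the \emph{spare power of the weight}, not into $u$: with exponent $N+\sigma$ the inner region produces $(1+|\tilde x|)^{-N-\sigma}\bigl(1+\ln(1+|\tilde x|)+\ln^-t\bigr)$, and since $(1+r)^{-\sigma}\ln(1+r)\le C_\sigma$ for every $\sigma>0$ one obtains the clean bound $I(\tilde x,t)\le C_\sigma(1+\ln^-t)(1+|\tilde x|)^{-N}$ with no spatial logarithm, after which the $L^1_0$-hypothesis applies directly.
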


\begin{proof}
Let $\sigma>0$. In the following, the letter $C$ stands for positive constants which may depend on $\sigma$ but not on $t$. By definition and Fubini's theorem, we have that 
\begin{equation}
  \label{eq:j-formulation}
\|w_u(\cdot,t)\|_{L^1_\sigma} = \int_{\R^N} \frac{|w_u(t,x)|}{(1+|x|)^{N+\sigma}}\,dx  \le C \int_{\R^N} |u(y)| j_\sigma(y,t)dy    
\end{equation}
with
$$
j_\sigma(y,t)= \int_{\R^N}(1+|x|)^{-N-\sigma}(|x-y|^2 + t^2)^{-\frac{N}{2}}dx.
$$
We claim that
\begin{equation}
  \label{eq:j-estimate}
j_\sigma(y,t) \le (1+|y|)^{-N}(1+ \ln^- t).
\end{equation}
To see this, we first consider $|y| \le 3$. Then we have
\begin{align*}
j_\sigma(y,t)&= \int_{\R^N}(1+|z-y|)^{-N-\sigma}(|z|^2 + t^2)^{-\frac{N}{2}}dz \le
               C\int_{\R^N}(1+|z|)^{-N-\sigma}(|z|^2 + t^2)^{-\frac{N}{2}}dx\\
 &\le C \int_{B_1}(|z|^2 + t^2)^{-\frac{N}{2}}dx + C \int_{\R^N \setminus B_1}|z|^{-N-\sigma} (|z|^2 + t^2)^{-\frac{N}{2}}dx\\
             & \le C \min \{|\ln t|, t^{-N}\} \le C (1+|y|)^{-N}(1+ \ln^- t).
\end{align*}
Moreover, for $|y| \ge 3$, we have 
\begin{align*}
\int_{B_{\frac{|y|}{2}}}&(1+|x|)^{-N-\sigma}(|x-y|^2 + t^2)^{-\frac{N}{2}}dx \le  \Bigl(\bigl(\frac{|y|}{2}\bigr)^2 + t^2\Bigr)^{-\frac{N}{2}}  \int_{\R^N}(1+|x|)^{-N-\sigma}dx\\
                                                                            &\le C |y|^{-N} \le C (1+|y|)^{-N}(1+ \ln^- t),
\end{align*}
whereas
\begin{align*}
&\int_{B_{2|y|} \setminus B_{\frac{|y|}{2}}}(1+|x|)^{-N-\sigma}(|x-y|^2 + t^2)^{-\frac{N}{2}}dx \le C (1+|y|)^{-N-\sigma}
                                                                                                 \int_{B_{2|y|} \setminus B_{\frac{|y|}{2}}}(|x-y|^2 + t^2)^{-\frac{N}{2}}dx\\
                                                                                               &\le C (1+|y|)^{-N} \int_{B_{4|y|}}(|z|^2 + t^2)^{-\frac{N}{2}}dx \le C (1+|y|)^{-N} \int_{B_{\frac{4|y|}{t}}}(|z|^2 + 1)^{-\frac{N}{2}}dx \le  C (1+|y|)^{-N}(1+ \ln^- t)
\end{align*}
and also 
\begin{align*}
  \int_{\R^N \setminus B_{2|y|}}&(1+|x|)^{-N-\sigma}(|x-y|^2 + t^2)^{-\frac{N}{2}}dx \le C (|y|^2 + t^2)^{-\frac{N}{2}} \int_{\R^N}(1+|x|)^{-N-\sigma}dx \\
  &\le C |y|^{-N} \le C (1+|y|)^{-N}(1+ \ln^- t).
\end{align*}
Combining these estimates, we get (\ref{eq:j-estimate}). Using (\ref{eq:j-estimate}) in (\ref{eq:j-formulation}), we get 
\begin{equation}
  \label{eq:w-u-ln-est}
\|w_u(\cdot,t)\|_{L^1_{\sigma}(\R^N)} \le C (1+ \ln^- t) \quad \text{for all $t>0$,}
\end{equation}
as claimed.
\end{proof}

\begin{remark}
\label{counterexample}
If $u \in L^1_0(\R^N)$, then the functions $w_u(\cdot,t)$, $t >0$ do not need to belong to $L^1_0(\R^N)$ in general. A counterexample is given by
  the function
  \begin{displaymath}
    u(x):= \ln^{-\tau} (1+|x|)\qquad\text{for every $x\in \R^N$, and fixed
  $\tau \in (1,2)$.}
\end{displaymath}
This function $u$ belongs to $L^1_0(\R^N)$, since
  \begin{displaymath}
    \int_{\R^N} \frac{\ln^{-\tau} (1+|x|)}{(1+|x|)^N}dx 
    \le c_1 + c_2 \int_{1}^\infty \frac{1}{r \ln^{\tau} r}dr <\infty
\end{displaymath}
form some constants $c_1$, $c_2>0$. On the other hand, by Fubini's theorem we have
\begin{displaymath}
\int_{\R^N} \frac{w_u(x,t)}{(1+|x|)^N}\,dx = \frac{c_N}{2} \int_{\R^N}\ln^{-\tau} (1+|y|)\rho(y)\,dy,
\end{displaymath}
where
\begin{displaymath}
  \rho(y):= \int_{\R^N} \left(|x-y|^{2}+t^2\right)^{-\frac{N}{2}}\frac{1}{(1+|x|)^N}\,dx.
\end{displaymath}
Note that 
\begin{align*}
\rho(y) &\ge \int_{B_{|y|}}
          \left(|x-y|^{2}+t^2\right)^{-\frac{N}{2}}\frac{1}{(1+|x|)^N}\,dx \ge \left(\left(2|y|\right)^{2}+t^2\right)^{-\frac{N}{2}}
  \int_{B_{|y|}} \frac{1}{(1+|x|)^N}\,dx\\  
&\ge C_t\, |y|^{-N}\int_{1}^{|y|}
  \frac{r^{N-1}}{(1+r)^N}\,dr
 \ge C_t\, |y|^{-N}\ln |y|.
\end{align*}
Here, the letter $C_t$ stands for positive constants depending on $t$. Since $1-\tau>-1$, we conclude
that  
\begin{align*}
  \int_{\R^N} \frac{w_u(x,t)}{(1+|x|)^N}\,dx &
  \ge C_t\, \int_{\R^N \setminus B_3 }\ln^{-\tau} (1+|y|)|y|^{-N}\ln |y|
                                               \,dy\\
   &\ge C_t \int_{\R^N \setminus B_3 } |y|^{-N} \ln^{1-\tau}|y|\,dy= \infty.
\end{align*}
\end{remark}

Next, we show that the Poisson integral formula \eqref{eq:poisson-formula-wu} yields a classical solution of (\ref{eq:eq-w-u}).

\begin{lemma}
\label{equiv-form-poiss-lemm-prelim-2}   
Let $u \in L^1_0(\R^N)$, and let $w_u: \R^{N+1}_+ \to \R$ be given by \eqref{eq:poisson-formula-wu}. Then 
$w_u$ is a classical solution of (\ref{eq:eq-w-u}) in $\R^{N+1}_+$.
\end{lemma}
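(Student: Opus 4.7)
The plan is a direct verification by differentiation under the integral sign, once we have established the key pointwise identity that the Poisson kernel itself is a classical solution of the weighted equation.

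First, I would show that $w_u \in C^\infty(\R^{N+1}_+)$. Fix an open set $V \subset\subset \R^{N+1}_+$, so there is some $t_0>0$ and $R>0$ with $|x|\le R$, $t\ge t_0$ on $V$. For $(x,t)\in V$ and any multi-index $\alpha = (\alpha_x,\alpha_t)$, a direct induction shows
\begin{displaymath}
\bigl|\partial^\alpha_{(x,t)} (|x-\tilde x|^2+t^2)^{-N/2}\bigr| \;\le\; C_\alpha\,(|x-\tilde x|^2+t^2)^{-(N+|\alpha|)/2}\;\le\; C'_{\alpha,V}\,(1+|\tilde x|)^{-N},
\end{displaymath}
where the last inequality uses $t \ge t_0$ and $|x|\le R$. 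Since $u\in L^1_0(\R^N)$, the right-hand side is integrable against $|u|$, so Lebesgue's dominated convergence theorem justifies differentiation under the integral sign to all orders. This gives $w_u \in C^\infty(\R^{N+1}_+)$ together with
\begin{displaymath}
\partial^\alpha_{(x,t)} w_u(x,t) = \frac{c_N}{2}\int_{\R^N} u(\tilde x)\,\partial^\alpha_{(x,t)}(|x-\tilde x|^2+t^2)^{-N/2}\,d\tilde x.
\end{displaymath}

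Next, the key pointwise identity: for each fixed $\tilde x \in \R^N$, the function $K_{\tilde x}(x,t):=(|x-\tilde x|^2+t^2)^{-N/2}$ satisfies $-\div(t\,\nabla K_{\tilde x})=0$ in $\R^{N+1}_+$. This I would verify by direct computation. Setting $r^2 := |x-\tilde x|^2+t^2$, one finds
\begin{displaymath}
\Delta_x K_{\tilde x} = -N^2 r^{-N-2} + N(N+2)|x-\tilde x|^2 r^{-N-4},\qquad \partial_t^2 K_{\tilde x} = -N r^{-N-2} + N(N+2)t^2 r^{-N-4},
\end{displaymath}
so that $\Delta K_{\tilde x} = N r^{-N-2}$, and also $\partial_t K_{\tilde x} = -Nt\, r^{-N-2}$. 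Since $\div(t\nabla K_{\tilde x}) = t\,\Delta K_{\tilde x} + \partial_t K_{\tilde x}$, these combine to give exactly $0$.

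Finally, applying the differentiation-under-the-integral formula with $\alpha$ equal to the second-order derivatives entering $\div(t\nabla\cdot)$, and using the pointwise identity above for each $\tilde x$, we conclude
\begin{displaymath}
-\div(t\,\nabla w_u)(x,t) \;=\; -\frac{c_N}{2}\int_{\R^N}u(\tilde x)\,\div\bigl(t\,\nabla K_{\tilde x}(x,t)\bigr)\,d\tilde x \;=\; 0
\end{displaymath}
for every $(x,t)\in \R^{N+1}_+$, which is the claim. The only mildly technical point is the uniform-in-$V$ kernel estimate that makes dominated convergence applicable; everything else is a straightforward direct calculation.
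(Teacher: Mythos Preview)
Your argument is correct and follows essentially the same route as the paper: verify that the kernel $(x,t)\mapsto (|x-\tilde x|^2+t^2)^{-N/2}$ solves the weighted equation, justify differentiation under the integral sign via pointwise derivative bounds and dominated convergence (using $u\in L^1_0(\R^N)$), and conclude. The only cosmetic difference is that the paper exploits the radial structure, writing $H_0(X)=\tfrac{c_N}{2}|X|^{-N}$ and computing $\Delta_X H_0+\tfrac{1}{t}\partial_t H_0$ via the one-variable radial Laplacian formula, whereas you compute $\Delta_x K$, $\partial_t^2 K$, and $\partial_t K$ in Cartesian coordinates; both arrive at the same cancellation.
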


\begin{proof}
It is convenient to define $H_0 : \R^{N+1}\setminus\{0\}\to\R$ by
\begin{equation}
  \label{eq:H0-definition}
  H_{0}(X):=\frac{c_N}{2}|X|^{-N}\qquad\text{for every $X=(x,t)\in
    \R^{N+1}\setminus\{0\}$,}
\end{equation}
where $|X|=\sqrt{|x|^2+|t|^2}$ is the Euclidean norm on
$\R^{N+1}$. Setting $R:=|X|$ in the following, we directly compute, for $t>0$,
\begin{equation}
  \label{eq:0-harmonic-H0}
  \begin{split}
    \frac{1}{t}\div(t\,\nabla H_0)
    &=\Delta_XH_0+\frac{1}{t}\partial_t H_0\\[2mm]
    &= \frac{C_N}{2}\Big[(R^{-N})''+\frac{N}{R}(R^{-N})'+\frac1t \big(
    R^{-N-2} (-N) t\big) \Big]
    \\[2mm]
    &=\frac{C_N}{2}\Big[N R^{-N-2} -N R^{-N-2}\Big]=0.
  \end{split}
\end{equation}
Moreover, if $u \in L^1_0(\R^N)$ and $w_u: \R^{N+1}_+ \to \R$ is given by \eqref{eq:poisson-formula-wu}, then one easily sees that
\begin{displaymath}
  w_u(x,t)=(H_0(\cdot,t)\ast u)(x)=\int_{\R^N}H_0(x-\tilde{x},t)u(\tilde{x})\,d\tilde{x}
\end{displaymath}
for every $(x,t)\in \R^{N+1}$ with $t\neq 0$. Further, note that
\begin{align*}
  \left|\frac{\partial H_0}{\partial x_i}(x,t)\right|&\le N\,\max\{1,\tfrac{1}{|t|^2}\}\,|H_0(x,t)|,\\
  \left|\frac{\partial^2 H_0}{\partial x_i^2}(x,t)\right|
     &\le N\,\tfrac{1}{|t|^2}\Big[1+2(\frac{N}{2}+1)\Big]\,|H_0(x,t)|,\\
  \left|\frac{\partial H_0}{\partial t}(x,t)\right| &\le N\,\tfrac{1}{|t|}\,|H_0(x,t)|, 
\end{align*}
for every $X=(x,t)\in \R^{N+1}_{+}$. Thus, Lebesgue's dominated
convergence theorem yields that we may interchange integral and
differentiation and so, we can conclude that 
\begin{align*}
 \frac{1}{t}\div(t\,\nabla w_u(x,t))  
  &= \Delta_X  w_u(X)+\frac{1}{t}\partial_t w_u(X)\\ 
 & =\int_{\R^N} \Big( \Delta_x H_0(x-\tilde x,t) + 
\partial_t^2 H_0 (x-\tilde x,t)  + \frac{1}{t}\partial_t 
H_0(x-\tilde x,t) \Big) u(\tilde x) d\tilde x
 =0
\end{align*}
for every $X=(x,t)\in \R^{N+1}_{+}$. This shows that $w_u$ satisfies
(\ref{eq:eq-w-u}).
\end{proof}

Next, we show that $w_u$ defined by \eqref{eq:poisson-formula-wu}
satisfies the inhomogeneous Neumann condition
(\ref{eq:asymptotic-w-1-equiv}).

\begin{lemma}
\label{equiv-form-poiss-lemm-prelim-2-2}   
Let $u \in L^1_0(\R^N)$, and let $w_u: \R^{N+1}_+ \to \R$ be given by \eqref{eq:poisson-formula-wu}. Then 
$w_u$ satisfies the asymptotic boundary condition (\ref{eq:asymptotic-w-1-equiv}).
\end{lemma}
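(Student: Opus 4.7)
The plan is to identify $-t\,\partial_t w_u(\cdot,t)$ as a convolution of $u$ against a standard approximation of the identity on $\R^N$, and then to handle the tail contribution outside a fixed compact set by means of the $(1+|\tilde x|)^{-N}$-weight from the definition of $L^1_0(\R^N)$. First I would differentiate~\eqref{eq:poisson-formula-wu} in $t$ under the integral sign, which is justified by the same dominated-convergence argument used in Lemma~\ref{equiv-form-poiss-lemm-prelim-2}. This yields
\begin{equation*}
  -t\,\partial_t w_u(x,t) \;=\; \int_{\R^N} P_t(x-\tilde x)\,u(\tilde x)\,d\tilde x,
  \qquad
  P_t(z) \;:=\; \frac{c_N N}{2}\cdot\frac{t^2}{(|z|^2+t^2)^{(N+2)/2}}.
\end{equation*}
Writing $P_t(z)=t^{-N}P_1(z/t)$ and using the Beta-type identity $\int_{\R^N}(1+|w|^2)^{-(N+2)/2}\,dw = \pi^{N/2}/\Gamma(\tfrac{N}{2}+1)$ together with $c_N=\Gamma(\tfrac{N}{2})/\pi^{N/2}$, a direct computation would show $\int_{\R^N}P_1=1$, so $\{P_t\}_{t>0}$ is a nonnegative approximation of the identity.

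Next I would fix a compact set $K\subset \R^N$, choose $R>0$ so large that $K\subset B_{R/2}$, and split $u=u_1+u_2$ with $u_1:=u\,\mathds{1}_{B_R}\in L^1(\R^N)$ and $u_2:=u-u_1$. The classical $L^1$ approximation-of-identity theorem then yields $P_t\ast u_1 \to u_1 = u|_K$ in $L^1(K)$ as $t\to 0^+$. For the tail, for every $x\in K$ and $\tilde x\in \R^N\setminus B_R$ one has $|x-\tilde x|\ge |\tilde x|/2$, so
\begin{equation*}
  P_t(x-\tilde x) \;\le\; C\,\frac{t^2}{(1+|\tilde x|)^{N+2}} \;\le\; C\,\frac{t^2}{(1+|\tilde x|)^{N}},
\end{equation*}
whence $\|P_t\ast u_2\|_{L^\infty(K)} \le C\,t^2\,\|u\|_{L^1_0(\R^N)}\to 0$. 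Adding the two estimates gives~\eqref{eq:asymptotic-w-1-equiv}.

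The main obstacle is precisely this tail step: the standard approximate-identity theorem requires $u\in L^1(\R^N)$, whereas $u\in L^1_0(\R^N)$ is only assumed integrable against the weight $(1+|\tilde x|)^{-N}$ and may grow at infinity. The resolution relies on the extra $t^2$-factor built into $P_t$, which produces genuine polynomial smallness away from the diagonal and, when paired with $(1+|\tilde x|)^{-N-2}$ and further weakened to $(1+|\tilde x|)^{-N}$, matches exactly the weight of the $L^1_0$-norm. This is the same mechanism already exploited in~\eqref{eq:C-sigma-est} of Lemma~\ref{equiv-form-poiss-lemm-prelim-1}, and it is what makes the degenerate-Neumann trace well defined for the full class $L^1_0(\R^N)$.
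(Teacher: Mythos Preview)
Your argument is correct. Both the paper and you begin by computing $-t\,\partial_t w_u = P_t \ast u$ with the kernel $P_t(z)=\tfrac{c_N N}{2}\,t^2(|z|^2+t^2)^{-(N+2)/2}$ and checking that $\int P_1 = 1$. From there the two proofs diverge: the paper changes variables to $z=(\tilde x - x)/t$ and splits the $z$-integration into three annular regions $B_{1/\sqrt{t}}$, $B_{1/t}\setminus B_{1/\sqrt{t}}$, and $\R^N\setminus B_{1/t}$, controlling the inner piece by continuity of translations in $L^1_{loc}$ and the outer piece by the $L^1_0$-weight (exactly your tail estimate). You instead split the function as $u=u\,\mathds{1}_{B_R}+u\,\mathds{1}_{\R^N\setminus B_R}$, invoke the classical $L^1$ approximate-identity theorem as a black box on the compactly supported part, and dispatch the tail via the same $t^2(1+|\tilde x|)^{-N}$ mechanism. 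Your decomposition is tidier in that it avoids the intermediate $1/\sqrt{t}$ scale and the appeal to density of $C^1_c(\R^N)$ in $L^1_0(\R^N)$; the paper's version, on the other hand, is self-contained and does not require recalling the $L^1$ approximate-identity statement.
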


\begin{proof}
For $t>0$, a direct computation
 shows 
$$
  - t \partial_tw_u(x,t)=   \frac{Nc_{N}}{2} t^2  \int_{\R^N} \left(|x-\tilde
    x|^{2}+t^2\right)^{-\frac{N+2}{2}} u(\tilde x) \,d \tilde x =   \frac{N c_{N}}{2} \int_{\R^N} \left(|z|^{2}+1\right)^{-\frac{N+2}{2}} u(x+t z) \,d z
$$
Moreover, we have
$$
  \frac{Nc_{N}}{2}\Bigl(  \int_{\R^N}(|z|^2+1)^{-\frac{N+2}{2}}\,dz\Bigr)= \frac{N  c_{N}}{2}\Bigl( \frac{\omega_N}{2}B(\frac{N}{2},1)\Bigr)=\frac{N}{2} B(\frac{N}{2},1)= \frac{N}{2}
\frac{\Gamma(\frac{N}{2})\Gamma(1)}{\Gamma(\frac{N}{2}+1)}=1,
$$
where $B$ refers to the Beta function. This allows us to write, for $t \le 1$, 
\begin{align*}
\frac{2}{Nc_{N}}&\Bigl(  - t \partial_tw_u(x,t)-u(x)\Bigr) 
= \int_{\R^N} \left(|z|^{2}+1\right)^{-\frac{N+2}{2}}\Bigl( u(x+t z)-u(x)\Bigr) \,d z \\ 
  &= \int_{B_{\frac{1}{\sqrt{t}}}}\dots dz  + \int_{B_{\frac{1}{t}} \setminus B_{\frac{1}{\sqrt{t}}}} \dots dz + \int_{\R^N \setminus B_{\frac{1}{t}}} \dots dz= I_1 (x,t)+I_2(x,t)+I_3(x,t).
\end{align*}
Consequently, (\ref{eq:asymptotic-w-1-equiv}) follows once we have shown, for given $R>0$, that 
  \begin{equation}
  \label{eq:sufficientj=123}
 \lim_{t \to 0^+} \|I_j(\cdot,t)\|_{L^1(B_R)} =0 \qquad \text{for $j=1,2,3$.}
\end{equation}
In the following, the letter $C$ denotes a positive constant, which
might change from line to line, depending on $R$, $N$ and $\alpha$, but {\em not} on $t$. We first note that 
\begin{align*}
  &\|I_1(\cdot,t)\|_{L^1(B_R)} \le \int_{B_{\frac{1}{\sqrt{t}}}} \left(|z|^{2}+1\right)^{-\frac{N+2}{2}}\|u(\cdot +t z)-u\|_{L^1(B_R)} \,d z \\
&\le \sup_{|{\tilde z}| \le \sqrt{t}}\|u(\cdot +{\tilde z})-u\|_{L^1(B_R)}\int_{\R^N} \left(|z|^{2}+1\right)^{-\frac{N+2}{2}}\,d z \le C \sup_{|{\tilde z}| \le \sqrt{t}}\|u(\cdot +{\tilde z})-u\|_{L^1(B_R)}.  
\end{align*}
Moreover, using the density of $C^1_c(\R^N)$ in $L^1_0(\R^N)$, it is easy to see that 
\begin{equation*}
  \lim_{t \to 0} \sup_{|{\tilde z}| \le \sqrt{t}}\|u(\cdot +
  {\tilde z})-u\|_{L^1(B_R)}
  = \lim_{{\tilde z} \to 0} \|u(\cdot + \tilde z)-u\|_{L^1(B_R)}= 0
\end{equation*}
and therefore 
$$
\|I_1(\cdot,t)\|_{L^1(B_R)} \to 0 \qquad \text{as $t \to 0^+$.}
$$
Moreover, we estimate
\begin{align*}
  \|I_2(\cdot,t)\|_{L^1(B_R)}  &\le \int_{B_{\frac{1}{t}} \setminus B_{\frac{1}{\sqrt{t}}}} \left(|z|^{2}+1\right)^{-\frac{N+2}{2}}\|u(\cdot +t z)-u\|_{L^1(B_R)} dz\\
&\le \sup_{|\tilde z| \le 1}\|u(\cdot +\tilde z)-u\|_{L^1(B_R)} \int_{B_{\frac{1}{t}} \setminus B_{\frac{1}{\sqrt{t}}}} \left(|z|^{2}+1\right)^{-\frac{N+2}{2}}\,d z \\
&\le 2 \|u\|_{L^1(B_{R+1})} \int_{\R^{N} \setminus B_{\frac{1}{\sqrt{t}}}} \left(|z|^{2}+1\right)^{-\frac{N+2}{2}}\,d z \\  
&\le C \|u\|_{L^1_0} \int_{\R^{N} \setminus B_{\frac{1}{\sqrt{t}}}} \left(|z|^{2}+1\right)^{-\frac{N+2}{2}}\,d z \to 0 \qquad \text{as $t \to 0$.}
\end{align*}
Finally, we estimate, for $|x| \le R$, 
\begin{align*}
  |I_3(x,t)| &\le \int_{\R^N \setminus B_{\frac{1}{t}}}|z|^{-(N+2)}|u(x+t z)-u(x)|dz\\
& \le 
\int_{\R^N \setminus B_{\frac{1}{t}}}|z|^{-(N+2)}|u(x+t z)|dz
               + |u(x)|\int_{\R^N \setminus B_{\frac{1}{t}}} |z|^{-(N+2)}\,d z \\
&= t^2 \int_{\R^N \setminus B_{1}(x)}|{\tilde x}-x|^{-(N+2)}|u({\tilde x})|d{\tilde x} + C t^2|u(x)|\\ 
             &\le C t^2 \Bigl( \int_{\R^N} (1+|{\tilde x}|)^{-N}|u({\tilde x})|d{\tilde x} +|u(x)|\Bigr)= C t^2 \bigl( \|u\|_{L^1_0} +|u(x)|\bigr).
\end{align*}
Here we used the fact that
$$
\sup_{|x| \le R,\: {\tilde x} \in \R^{N} \setminus B_1(x)} (1+|{\tilde x}|)^{N} |{\tilde x}-x|^{-(N+2)} < \infty.
$$
We thus conclude that
\begin{equation}
  \label{eq:I_3-est}
  \|I_3(\cdot,t)\|_{L^1(B_R)} \le C t^2 \Bigl( \|u\|_{L^1_0}|B_R|   +\|u\|_{L^1(B_R)}\Bigr)\le C t^2 \|u\|_{L^1_0} 
\to 0 \quad  \text{as $t \to 0^+$.} 
\end{equation}
Combining the estimates for $I_1,I_2$ and $I_3$, we have shown (\ref{eq:sufficientj=123}), and this finishes the proof of (\ref{eq:asymptotic-w-1-equiv}).
\end{proof}

The following lemma will be useful to prove the implication (ii) $\Lra$ (iii) in Theorem~\ref{main-theorem-equivalence}. 

\begin{lemma}
\label{equiv-form-poiss-lemm-prelim-3}   
Let $u \in L^1_{loc}(\R^N)$, and let $w \in \cAG(\R^{N+1}_+) \cap C^1(\R^{N+1}_+)$ satisfy
     \begin{equation}
       \label{eq:asymptotic-w-1-equiv-lemma}
      t \partial_t w(\cdot,t) \to -u \qquad \text{as $t \to 0^+$ in $L^1_{loc}(\R^N)$}
     \end{equation}
     Then we also have
          \begin{equation}
       \label{eq:asymptotic-w-1-equiv-variant}
      \frac{w(\cdot,t)}{\ln t} \to -u \qquad \text{in $L^1_{loc}(\R^N)$}
     \end{equation}
   \end{lemma}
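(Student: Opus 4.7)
The plan is to integrate the hypothesis in the $t$-variable, turning it into a weighted average whose averaged value forces the desired asymptotics. Set $g(x,\tau):=\tau\,\partial_\tau w(x,\tau)$, so the hypothesis reads $g(\cdot,\tau)\to -u$ in $L^1_{\text{loc}}(\R^N)$ as $\tau\to 0^+$. Since $w\in C^1(\R^{N+1}_+)$, the fundamental theorem of calculus gives, for $0<t<1$,
\begin{equation*}
w(x,t)=w(x,1)-\int_t^1\partial_\tau w(x,\tau)\,d\tau
      =w(x,1)-\int_t^1\frac{g(x,\tau)}{\tau}\,d\tau.
\end{equation*}
Using $\ln t=-\int_t^1\tau^{-1}\,d\tau$, I can rewrite this as
\begin{equation*}
w(x,t)+u(x)\ln t \;=\; w(x,1)-\int_t^1\frac{g(x,\tau)+u(x)}{\tau}\,d\tau,
\end{equation*}
and, after dividing by $\ln t$, the claim $w(\cdot,t)/\ln t\to -u$ in $L^1_{\text{loc}}(\R^N)$ reduces to showing that the right-hand side divided by $\ln t$ tends to $0$ in $L^1(K)$ for every compact $K\subset\R^N$.

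The first term is trivial: $w(\cdot,1)$ is continuous (hence in $L^1(K)$), while $|\ln t|\to\infty$. For the second term I would use a standard averaging/splitting argument. Given $\varepsilon>0$, the hypothesis provides $\tau_0\in(0,1)$ with $\|g(\cdot,\tau)+u\|_{L^1(K)}<\varepsilon$ for every $\tau\in(0,\tau_0)$. Splitting the integral at $\tau_0$, the inner piece is controlled by
\begin{equation*}
\int_K\Bigl|\int_t^{\tau_0}\frac{g(x,\tau)+u(x)}{\tau}\,d\tau\Bigr|\,dx
\le \int_t^{\tau_0}\frac{\|g(\cdot,\tau)+u\|_{L^1(K)}}{\tau}\,d\tau
\le \varepsilon\,(|\ln t|-|\ln\tau_0|),
\end{equation*}
while the outer piece is bounded by a constant $C(\tau_0,K)$: indeed, by $C^1$-regularity of $w$, the map $\tau\mapsto\|g(\cdot,\tau)+u\|_{L^1(K)}$ is continuous on $(0,1]$, so $\int_{\tau_0}^1\tau^{-1}\|g(\cdot,\tau)+u\|_{L^1(K)}\,d\tau<\infty$. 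Dividing by $|\ln t|$ yields an upper bound of $\varepsilon+C(\tau_0,K)/|\ln t|$, which is $\le 2\varepsilon$ for $t$ sufficiently small, as $\varepsilon$ was arbitrary.

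There is no serious obstacle: the argument is the $L^1$-valued version of the fact that $\tau^{-1}\,d\tau$ on $(t,1)$ concentrates near $\tau=0$ as $t\to 0^+$, so its weighted average of $g(\cdot,\tau)$ inherits the limit of $g(\cdot,\tau)$ at $\tau=0$. The only mild point to verify is the continuity of $\tau\mapsto \|g(\cdot,\tau)+u\|_{L^1(K)}$ on $(\tau_0,1]$, which is immediate from the continuity of $g$ on $\R^N\times(0,\infty)$ and the compactness of $K$; note that the $\cAG$ assumption is not actually needed here, only $C^1$-regularity together with the $L^1_{\text{loc}}$-hypothesis on $t\,\partial_t w$.
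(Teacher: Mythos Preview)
Your proof is correct and takes a genuinely different route from the paper's. The paper applies l'H\^opital's rule to the smoothed quantity $\frac{1}{\ln t}\int_{B_R}\sqrt{1+(w(x,t)+u(x)\ln t)^2}\,dx$, differentiates under the integral (the square root being inserted precisely to make the integrand differentiable), and observes that the resulting integrand is pointwise bounded by $|t\partial_t w(x,t)+u(x)|$; the hypothesis then forces the limit to be zero, and since $\sqrt{1+z^2}\ge|z|$ the $L^1$-conclusion follows. Your approach instead integrates the hypothesis directly via the fundamental theorem of calculus and recognises the problem as a logarithmic Ces\`aro mean: the measure $\tau^{-1}\,d\tau$ on $(t,1)$, normalised by $|\ln t|$, concentrates near $\tau=0$, so the average of $g(\cdot,\tau)+u$ inherits the limit at $\tau=0$. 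Your argument is more elementary and transparent, avoiding the somewhat ad hoc smoothing; the paper's l'H\^opital trick is shorter once one accepts the device. Your closing remark that the $\cAG$ hypothesis plays no role is also correct and applies equally to the paper's proof.
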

     
     \begin{proof}
Let $R>0$. Then l'H\^opital's rule gives 
\begin{align*}
  \lim_{t \to 0^+}&\frac{1}{\ln t} \int_{B_R} \sqrt{1 + (w(x,t)+ u(x) \ln t)^2}\,dx =\lim_{t \to 0^+} t \frac{d}{dt} \int_{B_R} \sqrt{1 + (w(x,t)+ u(x) \ln t)^2}\,dx\\
  &=\lim_{t \to 0^+}   \int_{B_R}  \frac{(w(x,t)+ u(x) \ln t) (t \partial_t w(x,t) + u(x)) }{\sqrt{1 + (w(x,t)+ u(x) \ln t)^2}}\,dx
\end{align*}
while
\begin{align*}
  \Bigl| \int_{B_R}  \frac{(w(x,t)+ u(x) \ln t) (t \partial_t w(x,t) + u(x)) }{\sqrt{1 + (w(x,t)+ u(x) \ln t)^2}}\,dx\Bigr|& \le \int_{B_R}|t \partial_t w(x,t) + u(x)|\,dx\\
 &= \|t \partial_t w(\cdot,t)+ u\|_{L^1(B_R)} \to 0\quad \text{as $t \to 0^+$}
\end{align*}
by (\ref{eq:asymptotic-w-1-equiv}). Hence we find that 
$$
  \lim_{t \to 0^+}\frac{1}{|\ln t|} \int_{B_R} \sqrt{1 + (w(x,t)+ u(x) \ln t)^2}\,dx =  \lim_{t \to 0^+}\frac{1}{\ln t} \int_{B_R} \sqrt{1 + (w(x,t)+ u(x) \ln t)^2}\,dx = 0 
  $$
  and therefore also 
  $$
\lim_{t \to 0} \Bigl\|\frac{w(\cdot,t)}{\ln t} + u \Bigr\|_{L^1(B_R)} = \lim_{t \to 0^+}\frac{1}{|\ln t|} \int_{B_R} |w(x,t)+ u(x) \ln t|\,dx = 0.
$$
Since $R>0$ was chosen arbitrarily, this yields (\ref{eq:asymptotic-w-1-equiv-variant}).
\end{proof}

We may now complete the

\begin{proof}[Proof of Theorem~\ref{main-theorem-equivalence}]
  We first show that (i) implies (ii). By Lemma~\ref{equiv-form-poiss-lemm-prelim-2}, the function $w:= w_u$ satisfies (\ref{eq:eq-w-u}) in $\R^{N+1}_+$. Moreover, by the assumption $u \in L^1_0(\R^N)$ and the dominated convergence theorem, we have
$$
|w_u(x,t)| \le \int_{\R^N}\frac{|u({\tilde x})|}{(|x-{\tilde x}|^2 + t^2)^\frac{N}{2}} d\tilde x \to 0 \qquad \text{as $t \to \infty$ for every $x \in \R^N$,}
$$
which gives (\ref{eq:asymptotic-w-2-equiv}).
Moreover, from Lemma~\ref{equiv-form-poiss-lemm-prelim-1} with $\sigma=1$, we get the estimate
$$
\int_{\R^N} \frac{|w({\tilde x},t)|}{(1+|{\tilde x}|)^{N+1}}d{\tilde x} \le C_1 (1+ \ln ^- t) \qquad \text{for $t>0$ with a constant $C_1>0$.}
$$
From this we deduce that for every $X=(x,t) \in \R^{N+1}_+$ we have, with a constant $C_2>0$,
\begin{align*}
  &\|w\|_{L^1(B_1(X) \cap \R^{N+1}_+)} \le C_2(1+|x|)^{\sigma} \int_{\max\{t-1,0\}}^{t+1}\int_{\R^N} \frac{|w_u({\tilde x},\tau)|}{(1+|{\tilde x}|)^\sigma}d{\tilde x} d\tau\\
 &\quad \le C_1 C_2 (1+|x|)^{\sigma} \int_{\max\{t-1,0\}}^{t+1}(1+ \ln^- \tau)\,d\tau \le 3 C_1 C_2 (1+|x|)^{\sigma} \le
3 C_1 C_2 (1+|X|)^{\sigma}.
\end{align*}
Hence $w_u \in \cAG(\R^{N+1}_+)$. Finally, it follows from
Lemma~\ref{equiv-form-poiss-lemm-prelim-2-2} that $w_u$ satisfies the
asymptotic boundary condition~(\ref{eq:asymptotic-w-1-equiv}).

Next we show that (ii) implies (iii). Since $w \in \cAG(\R^{N+1}_+)$, we
have $w \in \cAG(\R^{N+1} \setminus (\R^N \times \{0\})$ after even
reflection of the $t$-variable, and then a covering argument shows that
$w \in \cAG(\R^{N+1})$. Moreover, (\ref{eq:asymptotic-w-2-even equiv})
follows directly from (\ref{eq:asymptotic-w-2-equiv}). Finally, it
follows from (\ref{eq:eq-w-u}) that
\begin{equation}
  \label{eq:w-u-even-1}
-\div (|t| w(x,t)) = 0 \quad \text{in $\R^{N+1}  \setminus (\R^N \times \{0\})$.} 
\end{equation}
To see that (\ref{eq:eq-w-u-even}) holds in distributional sense, we let $\varphi \in C^\infty_c(\R^{N+1}_+)$ and
$\eps>0$. Integrating by parts twice and using (\ref{eq:w-u-even-1}), we then find that
\begin{align}
 \int_{\R^{N+1}}w\, \div (|t| \nabla \phi)\,dx\,dt 
&= \lim_{\eps \to 0^+}\int_{\R^{N}\times\{t\,:\,|t|> \eps\}}w\, \div
  (|t| \nabla \phi)\,dx\,dt \nonumber \\
&= \lim_{\eps \to 0^+}\eps \int_{\R^{N}\times\{t\,:\, |t|=
  \eps\}}
  w\, \partial_{\nu} \phi\,d\sigma(x,t) \nonumber \\
&\hspace{1.5cm} -\lim_{\eps \to 0^+}\int_{\R^{N}\times\{t\,:\,|t|> \eps\}}
  |t|^{1-2s}\nabla w\nabla \phi\,dx\,dt\\  
&= \lim_{\eps \to 0^+}\eps \int_{\R^{N}\times\{t\,:\, |t|=
  \eps\}}
  \bigl(w \partial_{\nu} \phi -  \phi \partial_{\nu} w\bigr)\,d\sigma(x,t),\label{twice-integration-distributional} 
\end{align}
where $\nu(x,t)= -\frac{t}{|t|}.$ Next we recall that, by Lemma~\ref{equiv-form-poiss-lemm-prelim-3}, the asymptotic property (\ref{eq:asymptotic-w-1-equiv}) implies (\ref{eq:asymptotic-w-1-equiv-variant}) and therefore
$$
t \|w(\cdot,t)\|_{L^1(B_R)} \to 0 \qquad \text{as $t \to 0$}
$$
Using this together with the fact that $\partial_{\nu}\phi$ has compact support, we see that 
\begin{equation}
\label{zero-limit}  
\lim_{\eps \to 0}\eps \int_{|t|= \eps}w \partial_\nu \phi\,d\sigma(x,t)= 0.
\end{equation}
Moreover, (\ref{eq:asymptotic-w-1-equiv}) and the evenness of $w$ in $t$ imply directly that 
\begin{align}
\lim_{\eps \to 0^+}\eps \int_{\R^{N}\times\{t\,:\, |t|=
                     \eps\}} \phi \partial_{\nu} w \,d\sigma(x,t)
                   &= \lim_{\eps \to 0^+} \int_{\R^{N}} (\phi(x,\eps)+\phi(x,-\eps)) \eps \partial_{\nu} w(x,\eps) \,dx \nonumber\\
  &= - 2 \int_{\R^N} \phi(x,0)u(x)\,dx \label{remainder-limit}  
\end{align}
Combining (\ref{twice-integration-distributional}), (\ref{zero-limit}) and (\ref{remainder-limit}) gives
\begin{equation}
  \label{eq:combination-even-reflection}
 \int_{\R^{N+1}}w\, \div (|t| \nabla \phi)\,dx\,dt = - 2 \int_{\R^N} \phi(x,0)u(x)\,dx,
\end{equation}
and this yields (\ref{eq:eq-w-u-even}) in distributional sense.\\

Next we show the implication (iii) $\Longrightarrow (iv)$. Since
(\ref{eq:asymptotic-W-equiv}) is an obvious consequence of
(\ref{eq:asymptotic-w-2-even equiv}) and $W \in \cAG(\R^{N+2})$ follows
from $w \in \cA(\R^{N+1})$, we only have to show (\ref{eq:eq-W-u}) in
distributional sense.  Since $W$ is radially symmetric in the variable
$y \in \R^2$, it suffices to show that
\begin{equation}
  \label{eq:combination-harmonic}
 \int_{\R^{N+2}}W\, \Delta \Phi \,dx\,dy = - 2 \pi \int_{\R^N} \Phi(x,0)u(x)\,dx
\end{equation}
for functions $\Phi \in C^\infty_c(\R^N)$ which are radially symmetric
in the $y$-variable, i.e. for functions of the form
$\Phi(x,y) = \phi(x,|y|)$ with a function
$\phi \in C^\infty_c(\R^{N+1})$ which is even in the $(N+1)$-th
variable.  In this case, using cylinder coordinates and
(\ref{eq:combination-even-reflection}) gives
\begin{align*}
  \int_{\R^{N+2}}W\, \Delta \phi \,dx\,dy &= (2 \pi) \int_{\R^{N} \times (0,\infty)}t w(x,t)\Bigl(\frac{1}{t}\div (t \nabla)\phi(x,t)\Bigr)dx dt \\
&= \pi  \int_{\R^{N+1}}w(x,t)\div (|t| \nabla \phi)dx dt = - 2 \pi \int_{\R^N} \phi(x,0)u(x)\,dx,  
\end{align*}
and thus (\ref{eq:combination-harmonic}) holds.\\
Finally, to show that (iv) implies (i), it suffices to show that
(\ref{eq:eq-W-u}) has a unique solution $W$ in $\cAG(\R^{N+2})$
satisfying (\ref{eq:asymptotic-W-equiv}). To see this, let
$W_1, W_2 \in \cAG(\R^{N+2})$ be two such solutions, and let
$W:= W_1-W_2$. Then $W$ satisfies $\Delta W =0$ in distributional sense
on $\R^{N+2}$ and therefore also in classical sense by Weyl's
lemma. Moreover, since $W \in \cAG(\R^{N+2})$, there exists constants
$C,\sigma>0$ with
$$
\int_{B_1(X)} |W(X')|d X' \le  C (1+|X|)^\sigma \qquad \text{for all $X \in \R^{N+2}$.}
$$
Using the mean value property for harmonic functions, it then follows
that, after making $C$ larger if necessary,
$$
|W(X)| \le C(1+|X|)^{\sigma} \qquad \text{for all $X \in \R^{N+2}$.}
$$
Then, a classical Liouville property for harmonic functions implies that
$W$ is a polynomial. Consequently, for every fixed $x \in \R^N$, the
function $W(x,\cdot)$ is a polynomial in $y$ satisfying
$$
W(x,y) \to 0 \qquad \text{as $|y| \to \infty$}
$$
since (\ref{eq:asymptotic-W-equiv}) holds for both $W_1$ and $W_2$. Consequently, $W(x,\cdot) \equiv 0$ for every $x \in \R^N$ and therefore $W  \equiv 0$, which shows $W_1 = W_2$. This gives the desired uniqueness result and finishes the proof of Theorem~\ref{main-theorem-equivalence}.
\end{proof}

\section{Rigorous derivation of the extension representation for $\loglap u$}
\label{sec:rigor-deriv-extens}

In this section, we prove the representation formula (\ref{eq:characterization-loglap}) for a fixed given function $u \in L^1_0(\R^N)$ with $w= w_u: \R^{N+1}_+ \to \R$ defined by the Poisson integral formula (\ref{eq:poisson-formula-wu}). Moreover, we prove the pointwise representation formula (\ref{eq:characterization-loglap-pointwise}) under the additional assumption that $u$ is Dini continuous at a point $x \in \R^N$. In combination with Theorem~\ref{main-theorem-equivalence} and Lemma~\ref{equiv-form-poiss-lemm-prelim-3}, this completes the proof of our main Theorems~\ref{thm:main1},~\ref{intro-even-reflection} and~\ref{thm:main2}.

We start with the pointwise representation formula (\ref{eq:characterization-loglap-pointwise}).

 \subsection{Pointwise representation of $\loglap u(x)$.}
 \label{sec:pointw-repr-logl}

Suppose that $u$ is Dini-continuous at a point $x\in \R^N$. For the proof of (\ref{eq:characterization-loglap-pointwise}), we require the following
two lemmas.

 \begin{lemma}
   \label{prelim-asympt}
   We have
   \begin{equation}
     \label{eq:claim-prelim-asympt}
       c_N \int_{B_1 \setminus B_t} \left(|x|^{2}+t^2\right)^{-\frac{N}{2}}\,dx
       = - 2 \ln t+ q_N+o(1) \qquad \text{as $t \to 0^+$,}
   \end{equation}
   where
   \begin{displaymath}
     q_N=2  \int_1^{+\infty}\big( (r^2+1)^{-\frac{N}{2}}-r^{-N}\big)r^{N-1}\,dr.
 \end{displaymath}
 \end{lemma}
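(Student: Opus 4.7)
The plan is to reduce the $N$-dimensional integral to a one-dimensional one via polar coordinates, then rescale to remove the explicit dependence on $t$ from the integrand. Since the integrand depends only on $|x|$, and $c_N\omega_N=2$ by \eqref{eq:def-c-n-rho-n}, I would first write
\begin{equation*}
  c_N \int_{B_1\setminus B_t}(|x|^2+t^2)^{-N/2}\,dx
  = 2\int_t^1 (r^2+t^2)^{-N/2} r^{N-1}\,dr.
\end{equation*}
Substituting $r=ts$ (so $dr=t\,ds$, $r^{N-1}=t^{N-1}s^{N-1}$ and $(r^2+t^2)^{-N/2}=t^{-N}(s^2+1)^{-N/2}$) cancels all powers of $t$ in the integrand and gives
\begin{equation*}
  c_N \int_{B_1\setminus B_t}(|x|^2+t^2)^{-N/2}\,dx = 2\int_1^{1/t}(s^2+1)^{-N/2} s^{N-1}\,ds.
\end{equation*}

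The key step is then to isolate the logarithmic divergence by adding and subtracting the leading large-$s$ behaviour $s^{-N}$ of $(s^2+1)^{-N/2}$:
\begin{equation*}
  2\int_1^{1/t}(s^2+1)^{-N/2} s^{N-1}\,ds
  = 2\int_1^{1/t}\bigl[(s^2+1)^{-N/2} - s^{-N}\bigr] s^{N-1}\,ds + 2\int_1^{1/t} s^{-1}\,ds.
\end{equation*}
The second integral equals exactly $-2\ln t$. For the first integral, a Taylor expansion at infinity yields $(s^2+1)^{-N/2}-s^{-N}=O(s^{-N-2})$, so the integrand is $O(s^{-3})$ and therefore absolutely integrable on $[1,\infty)$. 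By monotone (or dominated) convergence, the first integral converges as $t\to 0^+$ to
\begin{equation*}
  2\int_1^{\infty}\bigl[(r^2+1)^{-N/2} - r^{-N}\bigr] r^{N-1}\,dr = q_N,
\end{equation*}
which gives \eqref{eq:claim-prelim-asympt}.

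The computation is routine; the only conceptual point — which is the main (and rather mild) obstacle — is the choice of the right subtraction. One must subtract $s^{-N}$, matching the integrand's asymptotics at $s=\infty$, rather than something that cancels its value at $s=1$; this is precisely what extracts the $-2\ln t$ divergence and leaves a tail whose limit is the clean constant $q_N$ stated in the lemma.
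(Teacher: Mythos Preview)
Your proof is correct and follows essentially the same approach as the paper: both rescale the integral to $2\int_1^{1/t}(s^2+1)^{-N/2}s^{N-1}\,ds$, then add and subtract $s^{-N}$ to isolate the $-2\ln t$ term and a convergent remainder tending to $q_N$. The only cosmetic difference is that the paper performs the substitution $x=tz$ in $\R^N$ before passing to polar coordinates, and bounds the tail explicitly by $\frac{N}{2}t^2$ rather than invoking dominated convergence.
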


 \begin{proof}
We have 
\begin{align*}
  c_N \int_{B_1\setminus B_t} \left(|x|^{2}+t^2\right)^{-\frac{N}{2}}
  \,d x 
  &=c_N \int_{B_{\frac{1}t}\setminus B_1}
    \left(|z|^{2}+1\right)^{-\frac{N}{2}}   \,d z\\
  &= c_N \int_{B_{\frac{1}t}\setminus B_1}  |z| ^{-N } \,dz 
   + c_N \int_{B_{\frac{1}t}\setminus B_1} 
    \big(\left(|z|^{2}+1\right)^{-\frac{N}{2}}- |z| ^{-N } \big)  \,d z\\
   &=- 2 \ln t+ 2 \biggl(\int_1^{+\infty}\big( (r^2+1)^{-\frac{N}{2}}-r^{-N}\big)r^{N-1}dr 
  \\& \qquad\qquad\qquad\quad -\int_{\frac{1}t}^{+\infty}
      \big( (r^2+1)^{-\frac{N}{2}}-r^{-N}\big)r^{N-1}dr\biggr),
\end{align*}
where, for $0<t \ll 1$
\begin{align*} 
  0<&\int_{\frac{1}t}^{+\infty}\big(r^{-N}- (r^2+1)^{-\frac{N}{2}} \big)
   r^{N-1}dr=\int_{\frac{1}t}^{+\infty}\big(1- (1+r^{-2})^{-\frac{N}{2}} \big)r^{-1}dr\\
  &\leq  N  \int_{\frac{1}t}^{+\infty} r^{-3}dr= \frac{N}{2} t^2 \:\to
    \: 0 \qquad \text{as $t \to 0^+$.}
\end{align*}
Consequently,
\begin{displaymath}
\lim_{t \to 0^+} c_N \Bigl(\int_{B_1 \setminus B_t}
\left(|x|^{2}+t^2\right)^{-\frac{N}{2}}   
\,d x + 2 \ln t\Bigr) = q_n
\end{displaymath}
with
\begin{displaymath}
q_N = 2 \int_1^{+\infty}\big( (r^2+1)^{-\frac{N}{2}}-r^{-N}\big)r^{N-1}dr,
\end{displaymath}
as claimed.
\end{proof}

\begin{lemma}
  \label{computations-of-constants}
  Let 
  \begin{displaymath}
    \tilde q_N = 2 \int_0^1(1+t)^{-\frac{N}{2}} t^{\frac{N}{2}-1} dt.
  \end{displaymath}
  Then, the following statements hold.
  \begin{itemize}
  \item[(i)] For $m\in\N$ the Digamma function $\psi = \frac{\Gamma'}{\Gamma}$ satisfies  
    $$
\psi(m) +\gamma =\sum \limits^{m-1}_{k=1} \frac{1}{k}\qquad \text{and}\qquad  
\psi(m+\frac{1}{2})+\gamma=-2\ln2+\sum \limits^{m}_{k=1} \frac{2}{2k-1}.
$$
\item[(ii)] We have $q_1  =2\ln(1+\sqrt{2})$, $q_2 = \ln2$ and, for $N \ge 3$,
\begin{align}\label{consant-log1} 
q_{N}=\left\{
\begin{aligned}
  & -\ln 2+ \sum^{m-1}_{k=1}\frac{1}{k2^{k}}-
  \sum^{m-1}_{k=1}\frac{1}{k}  \qquad\quad  \text{for $N=2m$,}\\
 &2\ln2-2\ln(1+\sqrt{2})+\sum^{m}_{k=1} (2k-1)^{-1}2^{\frac{3-2k}{2}}-
\sum^{m}_{k=1}\frac{2}{2k-1}  \quad \text{for $ N= 2m+1$}.  
\end{aligned}
\right.
\end{align} 
\item[(iii)] We have $\tilde q_1  =2\ln(1+\sqrt{2})$, $\tilde q_2 = \ln2$ and, for $N\geq 3$,
\begin{align}\label{consant-log2} 
\tilde q_N=\left\{
\begin{aligned}
  &\ln 2- \sum^{m-1}_{k=1}\frac{1}{k2^{k}}  \qquad\quad  \text{for $N=2m$,}\\
 &2\ln(1+\sqrt{2})- \sum^{m}_{k=1} (2k-1)^{-1}2^{\frac{3-2k}{2}} \quad \text{for $ N= 2m+1$}.  
\end{aligned}
\right.
\end{align}
\end{itemize}
\end{lemma}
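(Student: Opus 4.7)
The strategy is straightforward: part~(i) is a standard consequence of the recursion for the Digamma function, while parts~(ii) and~(iii) both reduce to elementary integrals of rational functions after suitable substitutions, with the even case $N=2m$ and the odd case $N=2m+1$ treated separately via geometric-series expansions. For~(i), I would invoke the functional equation $\psi(x+1)=\psi(x)+1/x$ together with the classical boundary values $\psi(1)=-\gamma$ and $\psi(1/2)=-\gamma-2\ln 2$: iterating from $x=1$ gives $\psi(m)=-\gamma+\sum_{k=1}^{m-1}1/k$, and iterating from $x=1/2$ gives $\psi(m+1/2)=-\gamma-2\ln 2+\sum_{k=1}^{m}\tfrac{2}{2k-1}$, which immediately rearrange to the stated identities.

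For~(ii), the plan is first to substitute $s=r^{2}$ to rewrite
\begin{equation*}
q_N=\int_{1}^{\infty}s^{(N-2)/2}\bigl((s+1)^{-N/2}-s^{-N/2}\bigr)\,ds,
\end{equation*}
and then to apply the change of variable $w=s/(s+1)$, which maps $[1,\infty)$ bijectively onto $[1/2,1]$ and brings $q_N$ into the bounded-domain form
\begin{equation*}
q_N=-\int_{1/2}^{1}\frac{1-w^{N/2}}{w(1-w)}\,dw.
\end{equation*}
For even $N=2m$ the factorization $1-w^{m}=(1-w)(1+w+\dots+w^{m-1})$ cancels the factor $(1-w)$ in the denominator, leaving $\tfrac{1}{w}+1+w+\dots+w^{m-2}$, whose elementary antiderivative evaluated at $w=1$ and $w=1/2$ produces the logarithmic term together with the two sums indexed by $k=1,\dots,m-1$. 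For odd $N=2m+1$ I would additionally substitute $w=y^{2}$ to remove the half-integer exponent; the resulting integrand $\tfrac{1-y^{2m+1}}{y(1-y^{2})}$ is handled via the partial fraction $\tfrac{1}{y(1+y)}=\tfrac{1}{y}-\tfrac{1}{1+y}$ together with the geometric expansion $\tfrac{1-y^{2m+1}}{1-y}=\sum_{k=0}^{2m}y^{k}$, reducing it to $\tfrac{1}{y}-\tfrac{1}{1+y}+1+y^{2}+\dots+y^{2m-2}$. Integrating from $y=1/\sqrt{2}$ to $y=1$ yields the $\ln 2$ and $\ln(1+\sqrt{2})$ contributions together with the odd-denominator sums indexed by $k=1,\dots,m$, where the identity $\tfrac{1+1/\sqrt{2}}{1-1/\sqrt{2}}=(1+\sqrt{2})^{2}$ produces the correct logarithmic coefficient.

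For~(iii), the analogous substitution $u=t/(1+t)$ transforms
\begin{equation*}
\tilde q_N=2\int_{0}^{1/2}\frac{u^{N/2-1}}{1-u}\,du,
\end{equation*}
and the same even/odd dichotomy applies: for $N=2m$ one writes $\tfrac{u^{m-1}}{1-u}=\tfrac{u^{m-1}-1}{1-u}+\tfrac{1}{1-u}$ and expands the first term as a finite geometric sum in $u$, while for $N=2m+1$ one substitutes $u=y^{2}$ and decomposes $\tfrac{y^{2m}}{1-y^{2}}$ analogously, again invoking the identity $\tfrac{1+1/\sqrt{2}}{1-1/\sqrt{2}}=(1+\sqrt{2})^{2}$. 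The main obstacle is purely combinatorial bookkeeping: tracking exponents, signs and the indexing of the finite sums carefully across the two substitutions and the partial-fraction/geometric-series expansions, most visibly in the odd case where the powers $2^{(3-2k)/2}$ emerge from the evaluations at $y=1/\sqrt{2}$. No conceptually new ideas are required beyond classical computations of elementary integrals.
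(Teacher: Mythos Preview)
Your plan is correct and takes a genuinely different route from the paper's argument. The paper proves parts~(ii) and~(iii) by repeated integration by parts in the integrals $\int_0^1(1+t)^{-N/2}t^{N/2-1}\,dt$ and $\int_1^\infty\bigl((1+t)^{-N/2}-t^{-N/2}\bigr)t^{N/2-1}\,dt$, which yields one-step recursions of the form $\tilde q_{2m}=\tilde q_{2(m-1)}-\tfrac{1}{(m-1)2^{m-1}}$ (and analogously in the other three cases); the closed formulas then follow by unwinding the recursion from the explicitly computed base cases $N=1,2$. Your approach instead uses the M\"obius-type substitutions $w=s/(s+1)$ and $u=t/(1+t)$ to reduce both integrals to bounded-domain integrals of rational functions, after which a finite geometric-series expansion (plus the further substitution $w=y^2$ in the odd case) gives the closed form in one stroke, without recursion. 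Both methods are elementary; yours is arguably cleaner because it treats all $N$ of a given parity uniformly and produces the full sum directly, while the paper's recursion requires separate verification of the first few cases. For part~(i) the paper simply declares the identities ``well known''; your derivation via the functional equation $\psi(x+1)=\psi(x)+1/x$ and the special values $\psi(1)=-\gamma$, $\psi(1/2)=-\gamma-2\ln 2$ is the standard one. One small caveat: the factor~$2$ in the displayed definition of $\tilde q_N$ in the lemma statement is a typo (compare the appendix, where the substitution $t=r^2$ removes it), so your transformed integral should read $\int_0^{1/2}u^{N/2-1}(1-u)^{-1}\,du$ without the prefactor; this does not affect the method, only the final bookkeeping.
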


While the formula for the Digamma function in
Lemma~\ref{computations-of-constants}(i) is well known, we could not
find the computation of the integrals in the representation of $q_N$ and
$\tilde q_N$ given in Parts (ii) and (iii).  These will follow from
elementary but quite lengthy computations which we will provide in the
appendix.\medskip

We may now complete the

\begin{proof}[Proof of (\ref{eq:characterization-loglap-pointwise})]
We have
\begin{align*}
  2\,w_u(x,t)+ 2 u(x)\,\ln t &
  = c_N \Bigl(\int_{\R^N\setminus B_1(x) } 
\left(|x-\tilde x|^{2}+t^2\right)^{-\frac{N}{2}} u(\tilde x)  \,d \tilde x\\
  &+ \int_{ B_{t}(x)} \left(|x-\tilde x|^{2}+t^2\right)^{-\frac{N}{2}} u(\tilde x)  \,d \tilde x \\
  &+ \int_{B_1(x) \setminus B_{t}(x) } \left(|x-\tilde
    x|^{2}+t^2\right)^{-\frac{N}{2}} u(\tilde x)  \,d \tilde x  + 2 u(x) \ln t \Bigr),
\end{align*}
for $t> 0$. Further, since $u \in L^1_0(\R^N)$, one has that 
\begin{align}
\int_{\R^N\setminus B_1(x) }& \Big|\left(|x-\tilde x|^{2}+t^2\right)^{-\frac{N}{2}} u(\tilde x)   -
 |x-\tilde x|^{-N} u(\tilde x) \Big| \,d \tilde x \nonumber \\
&\leq Nt^2\int_{\R^N\setminus B_1(x) }  |x-\tilde x|^{-N -2} u(\tilde x) \,d \tilde x \to 0 \qquad \text{as $t \to 0^+$.} \label{dini-convergence-1}
\end{align}
Moreover, 
\begin{align}
 c_N \int_{ B_{t}(x)} \left(|x-\tilde x|^{2}+t^2\right)^{-\frac{N}{2}} u(\tilde x)  \,d \tilde x  
   &= c_N \int_{ B_1} \left(| \tilde z|^{2}+1\right)^{-\frac{N}{2}} u(x+t\tilde z)  \,d \tilde z \nonumber\\
&=  c_N \int_{B_1} \left(| \tilde z|^{2}+1\right)^{-\frac{N}{2}} d\tilde z \ u(x)(1+o(1))\nonumber
\\&\to  \tilde q_N\, u(x) \qquad  \text{ as}\ \,  t \to 0^+, \label{dini-convergence-2}
\end{align}
where $\tilde q_N = 2 \int_0^1(1+t)^{-\frac{N}{2}} t^{\frac{N}{2}-1} dt$.
 In addition,
\allowdisplaybreaks
\begin{align}
&\int_{B_1(x) \setminus B_{t}(x) } \left(|x-\tilde
  x|^{2}+t^2\right)^{-\frac{N}{2}} u(\tilde x)  
\,d \tilde x + 2 u(x) \ln t \nonumber\\  
  &=\int_{B_1(x) \setminus B_{t}(x) } \left(|x-\tilde
    x|^{2}+t^2\right)^{-\frac{N}{2}} (u(\tilde x)-u(x))  \,d \tilde x
\nonumber \\
  &\qquad + u(x) \Bigl(\int_{B_1(x) \setminus B_{t}(x) }
    \left(|x-\tilde x|^{2}+t^2\right)^{-\frac{N}{2}}  \,d \tilde x + 2\, \ln t\Bigr) \nonumber\\
  &=\int_{B_1(x) \setminus B_{t}(x) } \left(|x-\tilde
    x|^{2}+t^2\right)^{-\frac{N}{2}} (u(\tilde x)-u(x))  \,d \tilde x
  + u(x)(q_N+ o(1)) \label{dini-convergence-3}
\end{align}
by Lemma~\ref{prelim-asympt}, where  
\begin{align}
\int_{B_1(x) \setminus B_{t}(x) }& \left(|x-\tilde
  x|^{2}+t^2\right)^{-\frac{N}{2}} (u(\tilde x)-u(x))  \,d \tilde x \nonumber\\
&
\to \int_{B_1(x)} |x-\tilde x|^{-N} (u(\tilde x)-u(x))  \,d \tilde x\quad {\rm as}\ t\to0^+.\label{dini-convergence-4}
\end{align}
Indeed, this follows from the dominated convergence theorem since
$$
\int_{B_1(x)} |x-\tilde x|^{-N} |u(\tilde x)-u(x)|  \,d \tilde x 
\le \omega_N \int_{0}^1 r^{-1} \max_{\tilde x \in S_r(x)}|u(\tilde x)-u(x)|dr < \infty
$$
by the Dini continuity of $u$ at $x$. Combining the asymptotics (\ref{dini-convergence-1}),~(\ref{dini-convergence-2}),~(\ref{dini-convergence-3}) and~(\ref{dini-convergence-4}), we
get
\begin{equation}
  \label{expansion 1-1}
  \begin{split}
    -2\Bigl(w_u(x,t)+ u(x)\, \ln t\Bigr) &= c_N \Bigl( \int_{B_1(x)} |x-\tilde x|^{-N} 
(u(x)-u(\tilde x))  \,d \tilde x \\
 &\hspace{3cm}- \int_{\R^N\setminus B_1(x) } |x-\tilde x|^{-N} u(\tilde x)  \,d
   \tilde x\Bigr)(1+o(1))\\ 
   &\hspace{4cm} -  \Bigl(q_N + \tilde q_N\Bigr)u(x)(1+o(1))\\
  &= \Big( \loglap u(x) -\rho_N u(x) - \Bigl(q_N + \tilde q_N\Bigr)u(x)\Big)(1+o(1))  
  \end{split}
\end{equation}
as $t\to 0^+$. Now, recall that
$\rho_N = 2\ln(2)+\psi(\frac{N}{2}) - \gamma$ with the digamma
function $\psi$.  Thus, the formula~(\ref{eq:characterization-loglap-pointwise})
follows from~(\ref{expansion 1-1}) and the fact that
\begin{equation}
  \label{eq:label-computation-q-N-tilde-q-N}
  q_N + \tilde q_N+\rho_N  = 2(\ln 2 - \gamma),
\end{equation}
which is a consequence of the previous Lemma~\ref{computations-of-constants}. \medskip
\end{proof}

\begin{remark}
\label{uniformly-dini-continuous}  
An inspection of the proof above shows that, if $u \in L^1_0(\R^N)$ is uniformly Dini continuous on a compact set $K \subset \R^N$, then the convergence in ~(\ref{eq:characterization-loglap-pointwise}) is uniform for $x \in K$. Indeed, we only need to ensure that all convergences (\ref{dini-convergence-1}),~(\ref{dini-convergence-2}),~(\ref{dini-convergence-3}) and~(\ref{dini-convergence-4}) are uniform for $x \in K$ in this case. In the case of (\ref{dini-convergence-1}), this follows from the assumption $u \in L^1_0(\R^N)$ and the fact that 
$$
\sup_{x \in K,\, \tilde x \in \R^N\setminus B_1(x)} (1+|\tilde x|)^{N} |x-\tilde x|^{-N -2} < \infty
$$ 
Moreover, in the case of (\ref{dini-convergence-1}) and (\ref{dini-convergence-4}), it follows from the uniform Dini continuity of $u$ on $K$. Finally, in the case of~(\ref{dini-convergence-3}), it merely follows from Lemma~\ref{prelim-asympt} and the boundedness of $u$ on $K$. 
\end{remark}

 \subsection{Representation of $\loglap u$ in distributional sense}
\label{sec:repr-logl-u}

Now, we are in the position to prove the distributional representation of $\loglap u$ for $u \in L^1_0(\R^N)$ 
by (\ref{eq:characterization-loglap}), where, as before $w=w_u$ is defined by the Poisson integral formula (\ref{eq:poisson-formula-wu}).

More precisely, we shall prove the following.
 \begin{proposition}
   \label{main-distr-sense}
   For $u \in L^1_0(\R^N)$ and $\phi \in C_c^D(\R^N)$, we have 
\begin{equation}
  \label{eq:distributional-limit-dini}
\int_{\R^N}u \loglap \phi dx =  2(\ln 2-\gamma) \int_{\R^N}u \phi\,dx - 
  2\lim_{t \to 0^+}  \int_{\R^N}\Bigl(w_u(x,t) + u \ln t\Bigr)\phi (x) \,dx.
\end{equation}
with $w_u$ defined by (\ref{eq:poisson-formula-wu}). Here, as before, $C_c^D(\R^N)$ denotes the space of uniformly Dini
continous functions on $\R^N$ with compact support.
\end{proposition}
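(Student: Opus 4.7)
The plan is to reduce the distributional statement to the pointwise representation formula~(\ref{eq:characterization-loglap-pointwise}) applied to $\phi$ rather than $u$, using a symmetry/Fubini argument. The crucial observation is that the Poisson kernel $(|x-\tilde x|^2+t^2)^{-N/2}$ is symmetric in $(x,\tilde x)$. Provided one justifies Fubini's theorem (which follows from the decay estimate below together with $u\in L^1_0$ and compact support of $\phi$), one obtains
\begin{displaymath}
\int_{\R^N}\bigl(w_u(x,t)+u(x)\ln t\bigr)\phi(x)\,dx \;=\; \int_{\R^N}\bigl(w_\phi(x,t)+\phi(x)\ln t\bigr)u(x)\,dx
\end{displaymath}
for every $t>0$. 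Thus the proposition is equivalent to showing that the RHS limit exists and equals $-\tfrac12\int u\loglap\phi\,dx + (\ln 2-\gamma)\int u\phi\,dx$.

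Since $\phi\in C_c^D(\R^N)$ is \emph{uniformly} Dini continuous on all of $\R^N$, and in particular at every single point of $\R^N$, Section~\ref{sec:pointw-repr-logl} gives the pointwise identity
\begin{displaymath}
w_\phi(x,t)+\phi(x)\ln t \;\longrightarrow\; -\tfrac12\loglap\phi(x)+(\ln 2-\gamma)\phi(x) \qquad\text{as $t\to 0^+$}
\end{displaymath}
for every $x\in\R^N$, with \emph{uniform} convergence on compact sets by Remark~\ref{uniformly-dini-continuous}. It therefore remains to justify the interchange of the limit $t\to 0^+$ with the integration against $u$.

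For this the plan is a dominated convergence argument based on splitting $\R^N=K\cup(\R^N\setminus K)$ where $K\supset\supp\phi$ is a large fixed compact set. On $K$, the expression $w_\phi(x,t)+\phi(x)\ln t$ converges uniformly and thus is uniformly bounded for small $t$; pairing with $u|_K\in L^1(K)$ is harmless. On $\R^N\setminus K$ one has $\phi(x)=0$, so the integrand reduces to $w_\phi(x,t)u(x)$, and a direct estimate shows that if $\supp\phi\subset B_R$ and $|x|\ge 2R$, then
\begin{displaymath}
|w_\phi(x,t)| \;\le\; \frac{c_N}{2}\,\|\phi\|_{L^1}\bigl(|x|/2\bigr)^{-N} \;\le\; C(1+|x|)^{-N},
\end{displaymath}
uniformly for $t\in(0,1)$. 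Hence $|w_\phi(x,t)u(x)|$ is dominated by $C(1+|x|)^{-N}|u(x)|$, which is integrable by the definition of $L^1_0(\R^N)$. Dominated convergence then yields the desired limit identity, and an analogous decay bound for $\loglap\phi(x)$ (obtained directly from~(\ref{eq:log-laplace}) and the compact support of $\phi$) ensures that $\int u\loglap\phi\,dx$ is well defined to begin with.

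The main obstacle is really just the bookkeeping of this dominated convergence step: one must carefully separate the behaviour inside $\supp\phi$ (where uniform Dini continuity of $\phi$, rather than of $u$, is what rescues the argument and is precisely the reason the hypothesis has been strengthened from $C_c^\infty$ to $C_c^D$) from the behaviour at infinity (where the Poisson decay $|w_\phi(x,t)|\lesssim|x|^{-N}$ provides the integrable majorant against $u\in L^1_0$). Once this splitting is set up, no new cancellation or singular analysis is required — the heavy lifting has already been done in Section~\ref{sec:pointw-repr-logl}.
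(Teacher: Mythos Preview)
Your proposal is correct and follows essentially the same approach as the paper: you use the symmetry of the Poisson kernel (Fubini) to swap $w_u\phi$ for $w_\phi u$, invoke the pointwise representation~(\ref{eq:characterization-loglap-pointwise}) for the uniformly Dini continuous function $\phi$, and justify the interchange of limit and integration by dominated convergence with exactly the same splitting (uniform boundedness on a compact set containing $\supp\phi$ via Remark~\ref{uniformly-dini-continuous}, and the Poisson decay $|w_\phi(x,t)|\le C(1+|x|)^{-N}$ outside). Your additional remark that $\loglap\phi$ itself has $(1+|x|)^{-N}$ decay, so that $\int u\,\loglap\phi\,dx$ is well defined, is a welcome piece of bookkeeping that the paper leaves implicit.
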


 \begin{proof}
Let $w_\phi: \R^{N+1}_+ \to \R$ be given by (\ref{eq:poisson-formula-wu}) with $u$ replaced by $\phi$, i.e.,
$$
w_\phi(x,t)= \frac{c_N}{2}   \int_{\R^N} \left(|x-\tilde x|^{2}+t^2\right)^{-\frac{N}{2}}\phi(\tilde x)\,d \tilde x.
$$
We then have, for $t>0$, by Fubini's theorem
\begin{align}
  \int_{\R^N}& (w_u(x,t)\phi(x)\,dx = \frac{c_N}{2} \int_{\R^N}
\int_{\R^N} \left(|(x -\tilde x|^{2}+t^2\right)^{-\frac{N}{2}}u(x) \phi(\tilde x)\,d \tilde x dx \nonumber\\ 
  &= \frac{c_N}{2} \int_{\R^N}
    \int_{\R^N} \left(|(x -\tilde x|^{2}+t^2\right)^{-\frac{N}{2}} \phi(x)u(\tilde x) \,dx d \tilde x = \int_{\R^N} (w_\phi(x,t)u(x)\,dx. \label{distributional-limit-proof-1}  
\end{align}
Consequently, using the fact that we have already proved the pointwise representation (\ref{eq:characterization-loglap-pointwise}) for the Dini continuous function $\phi$ in place of $u$, we find that  
\begin{align*}
  \int_{\R^N}u \loglap \phi dx &= 2(\ln 2- \gamma) \int_{\R^N}\phi u\,dx - 2 \int_{\R^N}u(x) \lim_{t \to 0^+} (w_\phi(x,t) + 2 \phi(x) \ln t) \,dx\\
&= 2(\ln 2- \gamma) \int_{\R^N}\phi u\,dx - 2 \lim_{t \to 0^+} \int_{\R^N} u(x)  (w_\phi(x,t) + 2 \phi(x) \ln t) \,dx\\
                               &= 2(\ln 2-\gamma)\int_{\R^N} u\phi\,dx - \lim_{t \to 0^+} \int_{\R^N}(w_u(x,t)+2 u(x) \ln t) \phi(x)\,dx.
\end{align*}
Here we used the dominated convergence theorem in the second equality and \eqref{distributional-limit-proof-1} in the third inequality. To apply the dominated convergence theorem, we need to prove that
\begin{equation}
  \label{eq:final-estimated-needed}
|w_\phi(x,t) + 2 \phi(x) \ln t| \le C (1+|x|)^{-N} \qquad \text{for all $x \in \R^{N}$, $t \in (0,1)$}  
\end{equation}
with a constant $C>0$, and to combine this fact with the assumption $u \in L^1_0(\R^N)$. To show \eqref{eq:final-estimated-needed}, we let $R>1$ with $\supp \,\phi \subset B_R$. Then for $x \in \R^N$ with $|x| \ge 2R$ and $t \in (0,1)$ we have $2\phi(x) \ln t = 0$ and therefore 
\begin{align*}
|w_\phi(x,t) + 2 \phi(x) \ln t| = |w_\phi(x,t)| &\le  \frac{c_N}{2}   \int_{B_R} |x-\tilde x|^{-N}|\phi(\tilde x)|\,d \tilde x\\
& \le 2^{N-1}c_N |x|^{-N} \le 2^{2N-1}c_N (1+|x|)^{-N}.   
\end{align*}
Moreover, since $\phi$ is uniformly Dini continuous on $\R^N$, Remark~\ref{uniformly-dini-continuous} implies that  
$|w_\phi(x,t) + 2 \phi(x) \ln t|$ remains uniformly bounded for $x \in B_{2R}$, $t \in (0,1)$. This shows (\ref{eq:final-estimated-needed}) and finishes the proof of the proposition.
 \end{proof}

 \section{An application: The weak unique continuation property for the
   logarithmic Laplacian}
\label{sec:an-application:-weak}

This section is dedicated to an application of the theory developed in
the previous sections. More precisely, thanks to the extension
problem~(\ref{eq:distributional-2-extension-thm2}) in $\R^{N+2}$, we are able to prove the 
following weak unique continuation property for the logarithmic Laplacian.

\begin{theorem}\label{unique-cont}
  Let $u \in L^1_0(\R^N)$. If there is an
  nonempty open subset $\Omega \subseteq \R^N$ such that
  \begin{equation}
    \label{eq:assumption-unique-cont}
    u \equiv 0 \quad \text{in $\Omega$} \quad \quad \text{and} \quad
    \quad \loglap u \equiv 0 \quad \text{in $\Omega$ in distributional sense,}
  \end{equation}
  Then $u \equiv 0$ on $\R^N$.
\end{theorem}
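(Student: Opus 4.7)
The plan is to work with the doubled extension $W_u$ from Theorem~\ref{thm:main2}: show that $W_u$ vanishes on a large open subset of $\R^{N+2}$, and then read off $u\equiv 0$ from the distributional identity \eqref{eq:distributional-2-extension-thm2}.

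First I would reduce to the case $\Omega\cap\supp u=\emptyset$ (shrinking $\Omega$ to a smaller ball if necessary). Under this assumption, the integral representation \eqref{eq:log-laplace} shows that $\loglap u$ is continuous on $\Omega$: if $B\Subset\Omega$, then for $x\in B$ the principal value term is absent and $\loglap u(x)=-c_N\int_{\R^N\setminus B}u(z)|x-z|^{-N}\,dz$, which depends continuously on $x$. Hence the distributional hypothesis upgrades to pointwise vanishing $\loglap u(x)=0$ on $\Omega$. Since $u\equiv 0$ in a neighborhood of every $x\in\Omega$, $u$ is trivially Dini continuous there, and Theorem~\ref{thm:main2}(iv) delivers
\begin{equation*}
\lim_{|y|\to 0}W_u(x,y)=0\qquad\text{for every }x\in\Omega.
\end{equation*}

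Next, the source $2\pi u\LNdoo$ in \eqref{eq:distributional-2-extension-thm2} is supported in $\supp u\times\{(0,0)\}$, so $\Delta W_u=0$ distributionally on $U:=\R^{N+2}\setminus(\supp u\times\{(0,0)\})$. Weyl's lemma then gives that $W_u$ is harmonic, hence real-analytic, on $U$, and the Poisson formula \eqref{eq:poisson-formula-loglap-2-dim} shows that $W_u$ extends continuously to $(\R^N\setminus\supp u)\times\{(0,0)\}\subset U$ with boundary value $\frac{c_N}{2}\int_{\supp u}u(\tilde x)|x-\tilde x|^{-N}\,d\tilde x$, which vanishes on $\Omega$ by the previous step.

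The core step, which I expect to be the main obstacle, is to propagate this vanishing from the codimension-two slice $\Omega\times\{(0,0)\}$ to an open subset of $U$. Fix $x_0\in\Omega$. Near $(x_0,0,0)$, $W_u$ is real-analytic and rotationally invariant in $y\in\R^2$ (immediate from the Poisson formula), and rotation-invariant polynomials on $\R^2$ are polynomials in $|y|^2$. Consequently $W_u$ admits the convergent expansion
\begin{equation*}
W_u(x,y)=\sum_{n\ge 0}a_n(x)\,|y|^{2n}
\end{equation*}
in some open neighborhood of $(x_0,0,0)$, with $a_n$ real-analytic in $x$. Using $\Delta_y|y|^{2n}=4n^2|y|^{2n-2}$ on $\R^2$, the identity $\Delta W_u=0$ yields the recursion $a_{n+1}(x)=-\tfrac{1}{4(n+1)^2}\Delta_x a_n(x)$, so each $a_n$ is an explicit constant multiple of $\Delta_x^n a_0$. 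Since $a_0(\cdot)=W_u(\cdot,0,0)\equiv 0$ in $\Omega$, all $a_n$ vanish near $x_0$, and hence $W_u\equiv 0$ on a full $(N+2)$-dimensional neighborhood of $(x_0,0,0)$ in $U$.

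Finally, $\supp u\times\{(0,0)\}$ has Hausdorff dimension at most $N$ in $\R^{N+2}$, hence codimension at least two, so its complement $U$ is open and path-connected. Unique continuation for real-analytic functions on the connected open set $U$ therefore upgrades the local vanishing to $W_u\equiv 0$ on $U$, which gives $W_u=0$ almost everywhere on $\R^{N+2}$. Testing \eqref{eq:distributional-2-extension-thm2} against product test functions $\phi(x,y)=\psi(x)\chi(y)$ with $\chi(0,0)=1$ and $\psi\in C_c^\infty(\R^N)$ arbitrary then produces $\int_{\R^N}u\psi\,dx=0$ for every such $\psi$, whence $u\equiv 0$.
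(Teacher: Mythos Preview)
Your argument is correct and follows essentially the same route as the paper: lift to the harmonic extension $W_u$ on $\R^{N+2}$, show $W_u(\cdot,0)\equiv 0$ on $\Omega$, use radiality in $y$ together with $\Delta W_u=0$ to kill all Taylor coefficients at points of $\Omega\times\{0\}$, and invoke analytic continuation on a connected open set. The only cosmetic differences are that the paper obtains $W_u(\cdot,0)\equiv 0$ on $\Omega$ directly from the distributional identity \eqref{eq:characterization-loglap-2-dim} (avoiding your detour through pointwise continuity of $\loglap u$ and Theorem~\ref{thm:main2}(iv)), phrases your power-series recursion as the iteration $\Delta_y^k W_u(x_0,0)=0$, and reads off $u\equiv 0$ from the limit \eqref{u-limit-L-1-loc-2-dim} rather than by testing \eqref{eq:distributional-2-extension-thm2}.
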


\begin{proof}
Let $W_u$ be the two-dimensional extension on $\R^{N+2}$ given by (\ref{eq:poisson-formula-loglap-2-dim}), which has the properties of Theorem~\ref{thm:main2}. Moreover, consider the domain 
\begin{displaymath}
  U:= \Bigl(\Omega \times \{0_{\R^2}\}\Bigr) \cup \Bigl(\R^{N+2} 
\setminus \bigl(\R^N \times \{0_{\R^2}\}\bigr)\Bigr) \subseteq  \R^{N+2}.
\end{displaymath}
 We first show that $W_u$ is harmonic in $U$. For this, we let 
$\phi \in C^\infty_c(U)$. Then, by~(\ref{eq:distributional-2-extension-thm2}), we have
  \begin{align*}
  \int_{U}W_u (-\Delta \phi)\,dxdy  &=   \int_{\R^{N+2}}W_u (-\Delta \phi)\,dxdy  \\
  &= 2\pi \int_{\R^{N}}u(x)\phi(x,0) \,dxdy =2\pi \int_{\Omega}u(x)\phi(x,0) \,dxdy  = 0.  
  \end{align*}
  Hence $W_u$ is harmonic in distributional sense in $U$ and therefore
  also harmonic in $U$ in classical sense by Weyl's lemma.  Moreover,
  for $\psi \in C^\infty_c(\Omega) \subset C^\infty_c(\R^N)$ we have, by hyopothesis~\eqref{eq:assumption-unique-cont} and
  (\ref{eq:characterization-loglap-2-dim}), that
  \begin{align*}
    0  = \int_{\R^N} u \loglap \phi \,dx &= 2(\ln 2-\gamma)\int_{\R^N}
                                            u\phi\,dx - 
\lim_{|y| \to 0} \int_{\R^N} 2(W_u(x,y)+ u(x)\ln |y|) \phi(x)\,dx\\
 &= 2(\ln 2-\gamma)\int_{\Omega} u\phi\,dx - \lim_{|y| \to 0}
   \int_{\Omega} 2(W_u(x,y)+ u(x)\ln |y|) \phi(x)\,dx\\
 &= - \lim_{|y| \to 0} \int_{\Omega} W_u(x,y) \phi(x)\,dx
 \\&= - \int_{\Omega} W_u(x,0) \phi(x)\,dx.
  \end{align*}
  Consequently, $W_u(\cdot,0) \equiv 0$ in $\Omega$. Hence for $x_0 \in \Omega$ we have
  $$
  0 = \Delta W_u(x_0,0)= \Delta_x W_u(x_0,0) + \Delta_{y} W_u(x_0,0)= \Delta_{y} W_u(x_0,0)
  $$
  and
  $$
  0 = \Delta^2 W_u(x_0,0)= \Delta_x^2 W_u(x_0,0) +2 \Delta_x  
  \Delta_y W_u(x_0,0)+ \Delta_y^2 W_u(x_0,0) = \Delta_y^2 W_u(x_0,0).
  $$
  Inductively, we get
  $$
  \Delta_y^k W_u(x_0,0) = 0 \qquad \text{for every $x_0 \in \Omega$, $k \in \N$.}
$$
Since $W_u(x_0,\cdot)$ is a radial function in $y$, this implies that
all partial derivatives with respect to the $y$-variable vanish at
$z_0$, whereas also all partial derivatives with respect to the $x$-variable
vanish since $W_u(\cdot,0) \equiv 0$ in $\Omega$. Hence all partial
derivatives of $W_u$ vanish at points $(x_0,0)$ with $x_0 \in \Omega$.
Since $W_u$ is analytic on the connected open set $U$, it follows that
$W_u \equiv 0$ on $U$. But then we have
$$
u= -\lim_{y \to 0} \frac{W_u(\cdot,y)}{\ln |y|} =0 \qquad \text{in $L^1_{loc}(\R^N)$}
$$
by (\ref{u-limit-L-1-loc-2-dim}), as claimed.
\end{proof}

\appendix
\section{Appendix }

Her we give the 
 {\bf Proof of  Lemma~\ref{computations-of-constants} Parts (ii) and (iii).}  We recall that
$$
q_N =2\int_1^{+\infty}\big( (r^2+1)^{-\frac{N}{2}}-r^{-N }\big)r^{N-1}dr=\int_1^{+\infty}\big( (t+1)^{-\frac{N}{2}}-t^{-\frac{N}{2}}\big)t^{\frac{N}{2}-1}dt
$$
and
$$
\tilde q_N =2\int_0^1 (r^2+1)^{-\frac{N}{2}}r^{N-1}dr=\int_0^1 (t+1)^{-\frac{N}{2}}t^{\frac{N}{2}-1}dt.
$$

{\bf  Claim 1:}  $\tilde q_{1}=2\ln(1+\sqrt{2})$, $\tilde q_{2}= \ln2$ and, for $N\geq 3$, 
\begin{align}\label{consant-log3} 
\tilde q_{N}=\left\{
\begin{aligned}
  &\ln 2- \sum^{m-1}_{k=1}\frac{1}{k2^{k}}  \qquad\quad  \text{for $N=2m$,}\\
 &2\ln(1+\sqrt{2})- \sum^{m}_{k=1} (2k-1)^{-1}2^{\frac{3-2k}{2}} \quad \text{for $ N= 2m+1$}.  
\end{aligned}
\right.
\end{align} 

\begin{proof}
When $N=2$, 
$$\tilde q_{2}=\int_0^1(1+t)^{-1} dt=\ln2.$$
When $N=4$,   \begin{align*}
 \tilde q_{4} & = \int_0^1 (1+t)^{-2} t dt  
 \\[2mm]&= - \Big( (1+t)^{-1}t\Big|^1_0 -\int_0^1(1+t)^{-1} dt\Big)=  \ln2-\frac{1}{2}.
 \end{align*} 
When $N=6$,   \begin{align*}
 \tilde q_{6} & = \int_0^1 (1+t)^{-3} t^2 dt  
 \\[2mm]&= -\frac{1}{2}  \Big( (1+t)^{-2}t^2\Big|^1_0 -2\int_0^1(1+t)^{-2}t dt\Big)=  \ln2-\frac{1}{2}-\frac{1}{8}.
 \end{align*} 
When $N=2m$, 
\begin{align*}
 \tilde q_{2m} & = \int_0^1 (1+t)^{-m} t^{m-1} dt  
 \\[2mm]&= - \frac{1}{m-1} \Big( (1+t)^{1-m}t^{m-1}\Big|^1_0 -\int_0^1(1+t)^{1-m}t^{m-1} dt\Big)=  \tilde \varrho_{2(m-1)}- \frac{1}{(m-1)2^{m-1}}. 
 \end{align*} 
 For $m\geq 1$, we have that  $N=2m$
$$\tilde q_{N}= \ln 2- \sum^{m-1}_{k=1}\frac{1}{k2^{k}}.$$
\bigskip

When $N=1$, 
$$\tilde q_{1}= \int_0^1 (1+t)^{-\frac{1}{2}} t^{-\frac{1}{2}}  dt=2\int_0^1(1+r^2)^{-1} dr =2\ln(1+\sqrt2).$$ 
When $N=3$,   \begin{align*}
 \tilde q_{3} & = \int_0^1 (1+t)^{-\frac{3}{2}} t^{\frac{1}{2}}  dt
 \\[2mm]&= -2\Big( (1+t)^{-\frac{1}{2}}t^{\frac{1}{2}}\Big|^1_0 -\frac{1}{2}\int_0^1(1+t)^{-\frac{1}{2}}t^{-\frac{1}{2}} dt\Big)= 2\ln(1+\sqrt2)- 2^{\frac{1}{2}}.
 \end{align*} 
When $N=5$,   \begin{align*}
 \tilde q_{5} & = \int_0^1 (1+t)^{-\frac{5}{2}} t^{\frac{3}{2}} dt  
 \\[2mm]&= -\frac{2}{3} \Big( (1+t)^{-\frac{3}{2}}t^{\frac{3}{2}}\Big|^1_0 -\frac{3}{2}\int_0^1(1+t)^{-\frac{3}{2}}t^{\frac{1}{2}} dt\Big)= 2\ln(1+\sqrt2)- 2^{\frac{1}{2}}-\frac{1}{3} 2^{-\frac{1}{2}}. 
 \end{align*} 
When $N=2m+1$, 
\begin{align*}
 \tilde q_{2m+1} & = \int_0^1 (1+t)^{-\frac{2m+1}{2}} t^{\frac{2m-1}{2}} dt  
 \\[2mm]&= -\frac{2}{2m-1} \Big( (1+t)^{-\frac{2m-1}{2}} t^{\frac{2m-1}{2}}\Big|^1_0 -\frac{2m-1}{2}\int_0^1(1+t)^{-\frac{2m-1}{2}} t^{\frac{2m-1}{2}} dt\Big)
 \\[2mm]& =  \tilde \varrho_{2(m-1)}-\frac{2}{(2m-1)2^{-\frac{2m-1}2}}.
 \end{align*} 
 For $m\geq 1$, we have that  $N=2m+1$
$$\tilde q_{N}=2\ln(1+\sqrt{2})-\sum^{m}_{k=1} (2k-1)^{-1}2^{\frac{3-2k}{2}}.$$
{\bf Claim 1} is proved. 
\end{proof}

 {\bf  Claim 2:}  $q_{1}=2\ln(1+\sqrt{2})$, $q_{2}= \ln2$ and, for $N\geq 3$,
\begin{align}\label{consant-log4} 
q_{N}=\left\{
\begin{aligned}
  & -\ln 2+ \sum^{m-1}_{k=1}\frac{1}{k2^{k}}- \sum^{m-1}_{k=1}\frac{1}{k}  \qquad\quad  \text{for $N=2m$,}\\
 &2\ln2-2\ln(1+\sqrt{2})+\sum^{m}_{k=1} (2k-1)^{-1}2^{\frac{3-2k}{2}}-\sum^{m}_{k=1}\frac{2}{2k-1}  \quad \text{for $ N= 2m+1$}.  
\end{aligned}
\right.
\end{align} 

\begin{proof}
When $N=2$, 
$$q_{2}= \int_1^{+\infty}\big( (t+1)^{-1}-t^{-1}\big)  dt=-\ln2.$$
When $N=4$,   \begin{align*}
 q_{4} & =\lim_{M\to+\infty}  \int_1^M\big( (t+1)^{-2}-t^{-2}\big)t dt 
 \\[2mm]&= - \Big[ \big((1+t)^{-1}-t^{-1}\big)t\Big|^{+\infty}_1 -\int_1^{+\infty}\big( (t+1)^{-1}-t^{-1}\big)  dt\Big]= - \ln2+ (2^{-1}-1).
 \end{align*} 
When $N=2m$, 
\begin{align*}
 q_{2m} & = \lim_{M\to+\infty}  \int_1^M \big( (t+1)^{-m}-t^{-m}\big)t^{m-1}dt
 \\[2mm]&= - \frac{1}{m-1} \Big[ \big((1+t)^{1-m}-t^{1-m}\big)t^{m-1}\Big|_1^{+\infty} -(m-1)\int_1^{+\infty}\big( (t+1)^{-m}-t^{-m}\big)t^{m-1}dt\Big]
  \\[2mm]& =  \tilde \varrho_{2(m-1)}+  \frac{1}{(m-1)2^{m-1}}-\frac{1}{m-1} . 
 \end{align*} 
 For $m\geq 2$, we have that  $N=2m$
$$\tilde q_{N}= -\ln 2+ \sum^{m-1}_{k=1}\frac{1}{k2^{k}}- \sum^{m-1}_{k=1}\frac{1}{k}.$$
\bigskip

When $N=1$, 
\begin{align*}
q_{1} = \int_1^{+\infty}\big( (t+1)^{-\frac{1}{2}}-t^{-\frac{1}{2}}\big) t^{-\frac{1}{2}} dt 
 & =2\int_1^{+\infty}\big((1+r^2)^{-\frac{1}{2}}-\frac{1}r\big) dr
\\[2mm]&=2\Big(\ln \big( r+\sqrt{1+r^2}\big)-\ln r\Big)\Big|_{1}^{+\infty}
\\[2mm]& =2\ln 2-2\ln(1+\sqrt2).
\end{align*}  
When $N=3$,   
\begin{align*}
 q_{3} & = \int_1^{+\infty}\big( (1+t)^{-\frac{3}{2}}-t^{-\frac{3}{2}}\big) t^{\frac{1}{2}}  dt
 \\[2mm]&= -2\Big[ \big( (1+t)^{-\frac{1}{2}}-t^{-\frac{1}{2}}\big) t^{\frac{1}{2}}\Big|_{1}^{+\infty} -\frac{1}{2}\int_1^{+\infty}\big( (t+1)^{-\frac{1}{2}}-t^{-\frac{1}{2}}\big) t^{-\frac{1}{2}} dt \Big]
  \\[2mm]&=2\ln 2-2\ln(1+\sqrt2) +2  \big( 2^{-\frac{1}{2}}-1\big).
 \end{align*} 
When $N=2m+1$, 
\begin{align*}
 q_{2m+1} & = \int_1^{+\infty}   \big( (1+t)^{-\frac{2m+1}{2}}-t^{-\frac{2m+1}{2}}\big) t^{\frac{2m-1}{2}}  dt 
 \\[2mm]&= -\frac{2}{2m-1} \Big[ \big( (1+t)^{-\frac{2m-1}{2}}-t^{-\frac{2m-1}{2}}\big) t^{\frac{2m-1}{2}}\Big|_{1}^{+\infty} -\frac{2m-1}{2}\int_1^{+\infty}   \big( (1+t)^{-\frac{2m-1}{2}}-t^{-\frac{2m-1}{2}}\big) t^{\frac{2m-3}{2}}  dt \Big]
 \\[2mm]& =  q_{2(m-1)}+\Big(\frac{2}{(2m-1)2^{-\frac{2m-1}2}}-\frac{2}{2m-1}\Big).
 \end{align*} 
 For $m\geq 1$, we have that  $N=2m+1$
$$q_{N}=2\ln 2-2\ln(1+\sqrt2) +\sum^{m}_{k=1} (2k-1)^{-1}2^{\frac{3-2k}{2}}-\sum^{m}_{k=1}\frac{2}{2k-1}.$$
The proof of {\bf Claim 2} is complete.
\end{proof}

%
%
%
%
%
%

\end{document}